\numberwithin{equation}{section}
\theoremstyle{plain}
\newtheorem{theorem}{Theorem}[section]
\newtheorem{lemma}[theorem]{Lemma}
\newtheorem{corollary}[theorem]{Corollary}
\newtheorem{proposition}[theorem]{Proposition}
\theoremstyle{definition}
\newtheorem{definition}[theorem]{Definition}
\theoremstyle{remark}
\newtheorem{fact}[theorem]{Fact}
\newtheorem{remark}[theorem]{Remark}
\newtheorem{case[theorem]}{Case}
\newcommand*{\setdiff}{\bigtriangleup}
\definecolor{blue}{rgb}{0,0,1}
\definecolor{red}{rgb}{1,0,.2}
\newcommand{\ybox}{}
\date{{\today}}
\author{K\'aroly Simon}
\address[K\'aroly Simon]{Budapest University of Technology and Economics, Department of Stochastics, Institute of Mathematics, and MTA-BME Stochastics Research
Group, 1521 Budapest, P.O.Box 91, Hungary} \email{simonk@math.bme.hu}
\author{Krystal Taylor }
\address[Krystal Taylor]{Department of Mathematics, The Ohio State, Columbus, OH}
\email{taylor.2952@osu.edu}
\subjclass[2010]{Primary 28A75 Secondary 28A80, 28A99}
\keywords{Arithmetic sum of sets, Hausdorff dimension, Cantor sets, projections of sets, pinned distance sets.}
\thanks{This work came out of a collaboration that started at ICERM at Brown University in Rhode Island. Also this research was supported by the Mathematics Research Institute of the Ohio State University.The research of K. Simon was partially supported by the grant OTKA K104745,
by MTA-BME Stochastics Research
Group and
by ICERM by supporting his participation on one of their a semester programs in 2016.}
\title{\parbox{14cm}{\centering{ Interior of Sums of Planar sets and Curves  }}}
\begin{document}

\setstcolor{red}

\maketitle

\begin{abstract}
Recently, considerable attention has been given to the study of the arithmetic sum
of two planar sets.
We focus on understanding the interior $\left(A+\Gamma\right)^{\circ}$,
when $\Gamma$ is a piecewise $\mathcal{C}^2$ curve and $A\subset \mathbb{R}^2.$
To begin, we give an example of a  very large (full-measure, dense, $G_\delta$) set $A$
such that $\left(A+S^1\right)^{\circ}=\emptyset$, where $S^1$ denotes the unit circle.
This suggests that merely the size of $A$ does not guarantee that $\left(A+S^1\right)^{\circ }\ne\emptyset$.
If, however, we assume that $A$ is
a kind of generalized product of two reasonably large sets,
 then
$\left(A+\Gamma\right)^{\circ}\ne\emptyset$ whenever $\Gamma$ has non-vanishing curvature.
As a  byproduct of our method, we prove that the pinned distance set  of $C:=C_{\gamma}\times C_{\gamma}$, $\gamma \geq \frac{1}{3}$, pinned at any point of $C$ has non-empty interior, where $C_{\gamma}$ (see \eqref{g59}) is the middle $1-2\gamma$ Cantor set (including the usual middle-third Cantor set, $C_{1/3}$). Our proof for the middle-third Cantor set requires a separate method.
We also prove that $C+S^1$ has non-empty interior.
\end{abstract}


\section{Introduction}

Given a  set $A\subset \mathbb{R}^2$, we study the set of points which are at a distance $1$ from at least one of the elements of $A$, where ``distance" refers to either the Euclidean distance or some other natural distance on the plane.
This set is $A+S^1$, where $S^1$ is the unit circle in the given distance.  In fact, we consider piecewise $\mathcal{C}^2$ curves $\Gamma$ in addition to $S^1$, and we investigate
conditions which guarantee that the interior of $A+\Gamma$ is not empty.
\\

This paper is the continuation of our joint paper \cite{STmeas} where we determine the Hausdorff dimension and the positivity of the Lebesgue measure of the sum set $A+\Gamma$.
In particular, a simple Fourier analytic proof shows that if $A\subset \mathbb{R}^2$ and $\dim_{\rm H} (A)>1$, then the \textit{measure} of $A+\Gamma $ is positive, where $\Gamma $ is an arbitrary $C^2$ curve with at least one point of curvature.
Under the same hypotheses on $\Gamma$, it is shown in \cite{STmeas} that
if $\dim_{\rm H} A \leq 1$, then $\dim_{\rm H} (A+\Gamma)=1+\dim_{\rm H} A$.
When $\dim_{\rm H} A = 1$, we prove that $\mathcal{L}^2(A+\Gamma)>0$
if and only if $A$ contains a regular (rectifiable) set of positive $\mathcal{H}^1$-measure.
(See \cite[Theorem 2.1]{STmeas}).
\\

When $\Gamma= S^1$, the set $A+\Gamma$ is the union of circles of radius one with centers in a set $A$.
The positivity of the Lebesgue measure of large unions of circles was also investigated by Marstrand \cite{Mar87}
and Wolff \cite{W00} (also see Oberlin \cite{O06} and Mitsis \cite{M98} for a higher dimensional analogue).
Let $S(a,r)$ denote the circle in the plane with center $a$ and radius $r$, and identify the set of all such circles with $\mathcal{S}=\mathbb{R}^2\times (0,\infty)$.  Given a collection of circles $E\subset \mathcal{S}$
 with dimension greater than $1$, it is reasonable to hypothesize that since a given circle has dimension $1$, then the union over circles in $E$ has dimension $2$.

 \begin{theorem}[Wolff $d=2$]
Given $E\subset \mathbb{R}^2\times \mathbb{R}^+$ satisfying $\ybox{\dim_{\rm H} E>1}$.
Then
 \begin{equation*}\label{O99}
   \mathcal{L}^2 \left( \bigcup_{(\mathbf{a},\rho)\in E} S(a,\rho) \right)\ne 0,
    \end{equation*}
where $  \mathcal{L}^2 (\cdot)$ denotes the Lebesgue measure.
  \end{theorem}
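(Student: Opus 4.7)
The plan is to use a Frostman-type measure on $E$ to construct a naturally induced measure $\nu$ on the union of circles, and then to prove that $\nu$ has an $L^2$ density in $\mathbb{R}^2$; this forces $\operatorname{supp}(\nu) \subset \bigcup_{(a,\rho)\in E} S(a,\rho)$ to carry positive Lebesgue measure. First, from $\dim_{\rm H} E > 1$ and Frostman's lemma, I would pick $s \in (1, \dim_{\rm H} E)$ and a nonzero Radon measure $\mu$ on $E$ satisfying $\mu(B(x,r)) \lesssim r^s$. Writing $\sigma_{a,\rho}$ for the arclength measure on $S(a,\rho)$, set
\[
\nu(B) \;=\; \int_E \sigma_{a,\rho}(B)\, d\mu(a,\rho), \qquad B \subset \mathbb{R}^2 \text{ Borel}.
\]
Then $\nu$ is a nonzero positive measure supported on the union, so it is enough to show $\nu \in L^2(\mathbb{R}^2)$.

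Next I would pass to the Fourier side. Using the classical asymptotic
\[
\widehat{\sigma_{a,\rho}}(\xi) \;=\; e^{-2\pi i a\cdot\xi}\,\rho\,\widehat{\sigma_1}(\rho\xi), \qquad |\widehat{\sigma_1}(\xi)| \lesssim (1+|\xi|)^{-1/2},
\]
Plancherel's identity reduces matters to bounding
\[
\|\nu\|_{L^2}^2 \;=\; \int_{\mathbb{R}^2}\Bigl|\int_E e^{-2\pi i a\cdot \xi}\,\rho\,\widehat{\sigma_1}(\rho\xi)\,d\mu(a,\rho)\Bigr|^2 d\xi.
\]
Expanding the square and integrating first in $\xi$ converts this into a $\mu\times\mu$ double integral of a geometric kernel $K\bigl((a,\rho),(a',\rho')\bigr)$ that measures the Fourier interaction between two circles.

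The main obstacle — and in fact the whole content of Wolff's theorem — is controlling $K$ near the tangency locus $\{|a-a'| = |\rho \pm \rho'|\}$, a codimension-one subvariety of the six-dimensional parameter space of pairs of circles. Away from this locus two circles meet transversally at two points, stationary phase yields polynomial kernel decay in the frequency variable, and the Frostman hypothesis $s>1$ easily absorbs the remaining mild singularity by a dimension count. Near tangencies, however, the naive estimate loses a logarithm and a sharper, $\delta$-discretized incidence argument is required: I would thicken each circle to an annulus of width $\delta$ and ask, for a $\delta$-separated subfamily of circles, how many annulus-annulus incidences of near-tangent type can occur. The essential geometric input is that a fixed circle admits only a controlled number of other circles that are internally or externally $\delta$-tangent to it, once both center and radius are constrained to lie on a set carrying a Frostman measure of exponent $s>1$; this is exactly a circular analogue of a Kakeya-type bound. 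Summing the resulting estimates over dyadic scales $\delta = 2^{-k}$ should then deliver $\|\nu\|_{L^2}^2 < \infty$, from which the conclusion follows.
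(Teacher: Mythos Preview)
The paper does not prove this statement. Wolff's theorem is quoted in the introduction purely as background, with a citation to \cite{W00}; no argument for it appears anywhere in the paper. So there is no ``paper's own proof'' to compare your attempt against.

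As for your proposal on its own terms: the outline is broadly faithful to the structure of Wolff's actual argument, but the paragraph beginning ``The main obstacle'' is where all of the mathematics lives, and what you have written there is a description of the difficulty rather than a resolution of it. The sentence ``a fixed circle admits only a controlled number of other circles that are internally or externally $\delta$-tangent to it, once both center and radius are constrained to lie on a set carrying a Frostman measure of exponent $s>1$'' is not an obvious fact one can quote; it \emph{is} the theorem. Wolff's proof of this tangency bound is a substantial combinatorial/incidence argument (a multiplicity lemma for $\delta$-separated families of circles, proved via a cell-decomposition style induction), and nothing in your sketch supplies it. Also, the clean $L^2$ reduction you describe is not quite how Wolff proceeds: he establishes $L^p$ bounds for a circular maximal operator for large $p$ and then deduces the measure statement, rather than directly bounding $\|\nu\|_{L^2}$.

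In short, you have written an accurate table of contents for a proof, but not a proof.
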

 \smallskip

 As far as we know, no non-trivial results are known on \textit{the interior} of $A+S^1$. In general, it turns out that even positivity of the Lebesgue measure of $A$ is not enough to guarantee that $A+S^1$ has non-empty interior (see our counter example in section \ref{Giant}). In Theorem \ref{main}, we provide conditions on $A\subset \mathbb{R}^2$ which guarantee that the interior of the sum set $\left(A+\Gamma \right)^{\circ}$ is non-empty, where $\Gamma $ is an arbitrary $C^2$ curve with at least one point of curvature.
 \\

Our method involves introducing a $1$-parameter family of Lipschitz maps $\left\{\Phi_\alpha\right\}_{\alpha\in J}$, where $\Phi_\alpha:A\to \ell _\alpha$, $\ell _\alpha$ is the vertical line at $x=\alpha$, and $J$ is an interval.
This family $\left\{\Phi_\alpha\right\}$ is defined in such a way that
\begin{itemize}
  \item the $\Phi_\alpha$-images of the set $A$ are each contained in $A+\Gamma$.
  \item the $\Phi_\alpha$-images of the set $A$ each contain an interval $I$ which is uniform over an interval worth of $\alpha$s.

\end{itemize}

Before we state our main results, we collect some of the most important notation:
\begin{definition}\label{g60}
  \begin{enumerate}
    \item Let $A \subset \mathbb{R}^2$, then $A^\circ$ is the interior of $A$.
    \item A  Cantor set is a totally disconnected perfect set (where perfect refers to a compact set which is identical to its accumulation points).
    \item
    For $\gamma\in(0,1)$, a Symmetric Cantor sets $C_\gamma\subset [0,1]$ (see \cite[Section 8.1]{Mat15}) is defined as follows:
We iterate the same process that yields the usual middle-third Cantor set with the difference that we remove the middle-$1-2\gamma$ portion of every interval throughout the construction:
\begin{equation}\label{g59}
  C_\gamma=\left\{(1-\gamma)\sum\limits_{k=1}^{\infty }
  a_k\gamma^{k-1}:a_k\in\left\{0,1\right\}
  \right\}.
\end{equation}
The so-called middle-$d$ Cantor set is $C_{1-2d}$. In particular $C_{1/3}$ is the usual middle-third Cantor set.
    \item We write
    \begin{equation}\label{g58}
      C(\gamma):=C_\gamma\times C_\gamma.
    \end{equation}
    In particular, $C(1/4)$ is called the \emph{four-corner Cantor set}.
  \end{enumerate}
\end{definition}

\subsection{Summary of main results}
The behavior of $A+\Gamma$ may be conspicuously different when  the piecewise-$\mathcal{C}^2$ curve $\Gamma$ has non-vanishing curvature (this case is considered in Section \ref{g64'}) and when $\Gamma$
is a polygon (this case is studied in Section \ref{w99}).

\subsubsection{Main results when $\Gamma$ has non-vanishing curvature}

  \begin{enumerate}
    \item There exists a set of full $\mathcal{L}^2$-measure which is dense in $\mathbb{R}^2$ and $G_\delta$  (we call it $G$ for Giant) such that $\left(G+S^1\right)^{\circ}=\emptyset$. (See Section \ref{Giant}.)
    \item However, in Theorem \ref{main} we prove that    $(C+\Gamma)^{\circ}\ne\emptyset$ if $C$ has  a kind of generalized product structure  $C=P(A,B)$ (see \eqref{N53} for the definition of $P$) of sets $A$ and $B$ satisfying one of the following properties:\\
   (i) $A$ and $B$ are sets of positive Lebesgue measure.
\\
(ii) $A$ and $B$ are Cantor sets so that $\tau(A)\cdot \tau(B) >1$,
where $\tau(A)$ and $\tau(B)$ stands for the Newhouse thickness (see section \ref{N43})
of $A$ and $B$.
\\
(iii) $A$ and $B$ are sets of second category and $B$ is a Baire set.
\\
(iv) There exists a non-degenerate  interval $J_1$ such that $\#(J_1\setminus A)=\aleph_0$  and
$\# B=\aleph$.
  \item As a byproduct of our proof of the previous Theorem, in Corollary \ref{N41}, we obtain that for every $\frac{1}{3} \leq \gamma <1$ and for every $\mathbf{t}\in C(\gamma)$, the interior of the  pinned distance set at $\mathbf{t}$, $\left(\Delta_{\mathbf{t}}(C(\gamma)\right)^{\circ}\ne\emptyset$, where $C(\gamma)$ was defined  in \eqref{g58}.

\item As an extension of item (2) part (ii) above, we prove that for the middle-third Cantor set $C$
we have $\left((C\times C)+S^1\right)^{\circ} \ne\emptyset $ (Theorem \ref{k1}).

\item In Theorem \ref{annulus},  we prove that $(A\times B)+S^1$ contains an annulus of radius $1$
whenever $A,B \subset \mathbb{R}$ with $\mathcal{L}^1(A)>0$ and $\mathcal{L}^1(B)>0$.
      \item In Lemma \ref{g99} we point out that $(A+\Gamma)^{\circ}\ne\emptyset$ if $A \subset \mathbb{R}^2$ is a connected set.
       \end{enumerate}

\subsubsection{Main results when $\Gamma$ is a polygon}
Let $N_\theta$ be the angle $\theta$ rotation of the perimeter of $[0,1]^2$.
\begin{enumerate}
  \item For every polygon $\Gamma$ we can find a set of full measure
  $A$ such that $(A+\Gamma)^{\circ}=\emptyset$.
  (Theorem \ref{g92}.)
  \item There is set $A$ of positive $\mathcal{L}^2$-measure such that simultaneously for all $\theta$ we have $(A+N_\theta)^{\circ}=\emptyset$. (Theorem \ref{g71}.)
\end{enumerate}

\section{The statements of our results}\label{g64}

\subsection{The case when $\Gamma$ has non-vanishing curvature}\label{g64'}

\subsubsection{Generalized product structure}

\begin{definition}\label{wavy}
Let $A,B\subset\mathbb{R}$ and $z\in \mathcal{C}^2(\mathbb{R}^2)$. We define the set $P(A,B)$
\begin{equation}\label{N53}
  P(A,B):=P_z(A,B):=
\left\{
(x,z(x,y)):x\in A\mbox{ and } y\in B
\right\},
\end{equation}
where
\begin{equation}\label{a01}
 z_y(x,y)\ne 0  \mbox{ for each } (x,y) \neq (0,0).
 \end{equation}

Notice that when $z(x,y) = y$, then $P(A,B)= A\times B$.
\end{definition}


The main result in this case is as follows:

\begin{theorem}\label{main}
Let $P(A,B)$ as in \eqref{N53} where $A, B \subset \mathbb{R}$ are bounded  sets so that at least one of the following hold:\\
(i) $A$ and $B$ are sets of positive Lebesgue measure.
\\
(ii) $A$ and $B$ are Cantor sets so that $\tau(A)\cdot \tau(B) >1$,
where $\tau(A)$ and $\tau(B)$ stands for the Newhouse thickness (see section \ref{N43})
of $A$ and $B$.
\\
(iii) $A$ and $B$ are sets of second category and $B$ is a Baire set.
\\
(iv) There exists a non-degenerate  interval $J_1$ such that $\#(J_1\setminus A)=\aleph_0$  and
$\# B=\aleph$.\\
Then the interior of the algebraic sum,
\begin{equation}\label{interior} \left( P(A,B) + \Gamma \right)^0 \neq \emptyset,\end{equation}
 is non-empty, where $\Gamma$
 is a set which contains
a $C^2$ sub-curve with non-vanishing curvature.
\end{theorem}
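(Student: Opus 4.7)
The plan is to reduce the problem to a one-dimensional ``collision'' question between $A$ and $B$ on the vertical slices of the sum set. Since $\Gamma$ contains a $C^{2}$ sub-arc with non-vanishing curvature, after restricting to a smaller sub-arc I may parametrize it as $\{(\gamma_1(s),\gamma_2(s)):s\in U\}$ with $\gamma_1'(s)\neq 0$; this is possible because non-vanishing curvature rules out an everywhere-vertical tangent direction on any open sub-arc. Setting $\tilde f:=\gamma_2\circ\gamma_1^{-1}$, I obtain a $C^{2}$ function with $\tilde f''\neq 0$. A point $(\alpha,w)$ lies in $P(A,B)+\Gamma$ precisely when there exist $x\in A$, $y\in B$, $s\in U$ with $\alpha=x+\gamma_1(s)$ and $w=z(x,y)+\gamma_2(s)$; eliminating $s$ yields the slicing equation
\[
w=z(x,y)+\tilde f(\alpha-x).
\]

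Exploiting $z_y\neq 0$, I locally invert $z$ in its second argument by $H(x,\zeta)$ satisfying $z(x,H(x,\zeta))=\zeta$, so the slicing equation rewrites as $y=G_{\alpha,w}(x)$ with
\[
G_{\alpha,w}(x):=H\bigl(x,\,w-\tilde f(\alpha-x)\bigr).
\]
Thus $(\alpha,w)\in P(A,B)+\Gamma$ if and only if $G_{\alpha,w}(A)\cap B\neq\emptyset$. To produce non-empty interior it therefore suffices to exhibit open intervals $J\ni\alpha_0$ and $I\ni w_0$ so that $G_{\alpha,w}(A)\cap B\neq\emptyset$ for every $(\alpha,w)\in J\times I$; this places the open rectangle $J\times I$ inside $\left(P(A,B)+\Gamma\right)^{\circ}$. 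The family $\{G_{\alpha,w}\}$ is exactly the one-parameter family of Lipschitz slice maps $\Phi_\alpha$ indicated in the introduction.

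For each sub-case the intersection $G_{\alpha,w}(A)\cap B\neq\emptyset$ will be produced by a different collision tool: Steinhaus's theorem applied at simultaneous density-$1$ points of $A$ and $B$ in case (i); Newhouse's gap lemma in case (ii), using that the $C^{2}$ diffeomorphism $G_{\alpha,w}$ distorts Newhouse thickness by a factor arbitrarily close to $1$ on small scales and hence preserves $\tau(G_{\alpha,w}(A))\cdot\tau(B)>1$; a Baire category argument in case (iii), using that $G_{\alpha,w}$ is a homeomorphism and so carries second category to second category, which must then meet the Baire set $B$; and a condensation-point argument in case (iv), observing that $G_{\alpha,w}(A)$ misses only countably many points of the interval $G_{\alpha,w}(J_1)$ whereas any condensation point of the uncountable set $B$ places $\aleph$-many points of $B$ inside that interval.

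The main obstacle will be promoting each of these \emph{pointwise} collisions into a \emph{uniform} statement valid throughout an open rectangle $J\times I$. Uniformity will be harvested from the joint $C^{2}$ dependence of $G_{\alpha,w}$ on $(x,\alpha,w)$ together with the non-degeneracy $\tilde f''\neq 0$: shifting $(\alpha,w)$ slightly perturbs $G_{\alpha,w}$ only in a $C^{1}$-small way on compact pieces. The Steinhaus interval, realized as a sub-level set of the continuous convolution $\mathbf{1}_A*\mathbf{1}_B$, and Newhouse's nested-bridge construction are both stable under $C^{1}$-small deformations of the underlying sets, while in cases (iii) and (iv) the set of parameters $(\alpha,w)$ for which the intersection would fail is respectively meager or of cardinality less than $\aleph$, and therefore cannot cover any open rectangle.
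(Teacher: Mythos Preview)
Your overall architecture coincides with the paper's: writing the sub-arc as a graph $(x,\tilde f(x))$, slicing $P(A,B)+\Gamma$ by vertical lines $x=\alpha$, and reducing to showing $G_{\alpha,w}(A)\cap B\neq\emptyset$ for all $(\alpha,w)$ in an open rectangle. Your $G_{\alpha,w}$ is exactly the paper's implicit-function map $g_{c,\alpha}$, and your per-case toolkit (density points, Newhouse gap lemma with small thickness distortion, Baire category, condensation points) is the same one the paper deploys in its Proposition~\ref{T50}.

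There is, however, a genuine logical gap in your last paragraph. For cases (iii) and (iv) you argue that the set of bad parameters $(\alpha,w)$ is meager, respectively of cardinality $<\aleph$, and ``therefore cannot cover any open rectangle.'' That conclusion does not give you what you need: a meager or countable subset of $\mathbb{R}^2$ can be dense (e.g.\ $\mathbb{Q}^2$), so its complement need not contain any open rectangle at all. You must instead show that the \emph{good} parameter set is open. The paper does this directly: in case (iv) one checks that for every $(\alpha,w)$ close to $(\alpha_0,w_0)$ the condensation point of $B$ still lies in the interior of $G_{\alpha,w}(J_1)$, so the co-countable set $G_{\alpha,w}(A)\cap G_{\alpha,w}(J_1)$ must meet $B$; in case (iii) one proves (their Lemma~\ref{a86}) that the set $\{(\alpha,w):A\cap G_{\alpha,w}^{-1}(B)\text{ is second category}\}$ is open. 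Your pointwise sketch for (iii) is also too coarse: ``second category meets Baire'' is false in general (two disjoint open intervals), so one has to localise to a neighbourhood where $B$, being Baire and second category, is residual.

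Two smaller points. First, you should make explicit where $\tilde f''\neq 0$ enters: it is needed to guarantee $G'_{\alpha,w}(x_0)\neq 0$ at the chosen basepoint, since $G'_{\alpha,w}(x)=H_x+H_\zeta\,\tilde f'(\alpha-x)$ and only the strict monotonicity of $\tilde f'$ lets you pick $\alpha_0$ avoiding the bad value; without this your maps $G_{\alpha,w}$ need not be local diffeomorphisms and the case (ii) thickness argument collapses. Second, in case (i) the phrase ``sub-level set of the continuous convolution $\mathbf{1}_A*\mathbf{1}_B$'' is not the right object here---you are not forming $A+B$---and should be replaced by the direct density-point estimate the paper carries out: $G_{\alpha,w}(A)$ has density close to $1$ near $u_2$ uniformly for $(\alpha,w)$ near $(\alpha_0,w_0)$.
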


The proof is given in Section \ref{a02}.  The definition of Newhouse thickness is given in section \ref{N43}, and a review of Baire sets and sets of second category appears in section \ref{N42Baire}.

\begin{remark}
  This result shows in the simplest case that $(A\times B)+S^1$ has non empty interior if any of the conditions $(i)$-$(iv)$ hold.
\end{remark}

\begin{remark} More generally, if $\Gamma = \{(x,y): x^p + y^p =1\},$ then $$\left((A\times B) + \Gamma \right)^0\neq \emptyset$$
provided any of the conditions $(i)$-$(iv)$ hold.
\end{remark}

\begin{remark}\label{w98}
  It is easy to see that all the assumption of Theorem \ref{main} imply that
  $(A+B)^{\circ}\ne \emptyset $. However, there exist $A,B \subset \mathbb{R}$,
  $(A+B)^{\circ}\ne \emptyset$ but $((A\times B)+S^1)^{\circ}= \emptyset $. Namely, consider the sets $A,B \subset \mathbb{R}$ constructed in \cite[Theorem 5.11]{Falc86}. They satisfy:
  \begin{equation}\label{w97}
    \dim_{\rm H} A=\dim_{\rm H} B=0 \mbox{ and }
    A+B \mbox{ is an interval.}
  \end{equation}
On the other hand $A\times B$ is a $1$-set
(its one-dimensional Hausdorff measure is positive and finite). Then
$A\times B$ must be an irregular $1$-set (for the definition see \cite[Section 2]{Falc86}). This is immediate from \eqref{w97} and
\cite[Corollary 6.14]{Falc86}. It follows from
\cite[Corollary 2.3]{STmeas} that an irregular $1$-set plus $S^1$ has Lebesgue measure zero, so it cannot contain interior points.
\end{remark}

In fact, we can say something about the topology of the sum set in the following setting:

\begin{theorem}\label{annulus} Let $A$ and $B$ be subsets of \,$\mathbb{R}$ of positive Lebesgue measure.  Then
$$\left(A\times B \right)+S^1$$ contains an annulus of radius $1$.
\end{theorem}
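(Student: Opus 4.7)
The plan is, after fixing Lebesgue density points $a_0 \in A$ and $b_0 \in B$, to show that for every direction $\theta \in [0,2\pi)$ the point $q_\theta := (a_0 + \cos\theta,\ b_0 + \sin\theta)$ admits an open two-dimensional neighborhood $U_\theta \subset (A\times B) + S^1$ whose size is bounded below uniformly in $\theta$. The union $\bigcup_\theta U_\theta$ then covers an open annulus of radius $1$ centered at $(a_0,b_0)$.

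The key ingredient is the slicing identity: the intersection of $(A\times B) + S^1$ with the horizontal line $\{y = y_0\}$ equals
\[
A + E(y_0), \qquad E(y_0) := \left\{\pm\sqrt{1 - (y_0 - b)^2} : b \in B,\ |y_0 - b| \leq 1\right\}.
\]
For fixed $\theta$, I would take $y_0 = b_0 + \sin\theta$. When $\cos\theta \neq 0$, the map $b \mapsto \sqrt{1 - (y_0 - b)^2}$ has nonzero derivative $\sin\theta/|\cos\theta|$ at $b = b_0$, so it pushes the density of $B$ at $b_0$ onto the Lebesgue density of $E(y_0)$ at $\pm|\cos\theta|$; Steinhaus' theorem then gives an open interval around $a_0 + \cos\theta$ in $A + E(y_0)$. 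When $\cos\theta = 0$, the derivative vanishes, but the quadratic pullback $\sqrt{1 - (1-u)^2} \approx \sqrt{2u}$ combined with the $\pm$ symmetry still yields Lebesgue density $1$ at $0$ for $E(y_0)$, and Steinhaus again provides an interval around $a_0$. Varying $y_0$ over a small neighborhood of $b_0 + \sin\theta$, the sets $E(y_0)$ vary continuously (in $L^1$ of indicator functions), and the center of the Steinhaus interval shifts either linearly or quadratically with $y_0$; this produces the open 2D neighborhood $U_\theta$ of $q_\theta$. For $\theta \in \{0,\pi\}$ the horizontal slicing becomes tangent to $S^1$, so one instead slices vertically, obtaining the symmetric identity with $A$ and $B$ swapped.

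The main obstacle will be extracting a uniform lower bound on the width of $U_\theta$ as $\theta$ ranges over $[0, 2\pi)$. The Steinhaus width in $A + E(y_0)$ depends on the (a priori $\theta$-dependent) densities of $A$ at $a_0$ and of $E(y_0)$ at its designated density point, and the tangency degenerates at the four axis directions $\theta \in \{0,\pi/2,\pi,3\pi/2\}$ where the $\sqrt{\cdot}$-pullback governs. I would close this gap by compactness on the circle of directions, combined with a lower semi-continuity argument for the Steinhaus widths: on each open arc avoiding the four axis directions a uniform estimate is straightforward, and a separate localized quantitative Steinhaus argument at each of the four bad directions provides the missing overlap, yielding a single $\delta > 0$ valid for all $\theta$. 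Taking $\bigcup_\theta U_\theta$ then exhibits the annulus $\{p : 1 - \delta < |p - (a_0,b_0)| < 1 + \delta\}$ inside $(A\times B) + S^1$.
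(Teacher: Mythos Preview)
Your proposal is correct and follows essentially the same route as the paper: both reduce to density points of $A$ and $B$, slice $(A\times B)+S^1$ by axis-parallel lines to obtain a one-dimensional Steinhaus/density problem $g_{c,\alpha}(A)\cap B\ne\emptyset$, handle the tangent directions $\theta\in\{0,\pi/2,\pi,3\pi/2\}$ via a separate quadratic (Taylor) estimate, and swap $A\leftrightarrow B$ to cover the remaining pair of poles. The only difference is packaging---the paper carries out explicit density bounds in three regimes ($\alpha\in[0,\delta]$ with $c=\gamma(\alpha)$, then a small perturbation $c=\gamma(\alpha)+e$, then $\alpha\in[\delta,1)$) rather than invoking Steinhaus plus compactness---and one simplification worth noting is that once every $q_\theta$ is shown to lie in the \emph{interior} of $(A\times B)+S^1$, compactness of the circle $\{q_\theta\}$ immediately yields a tubular neighborhood (annulus), so no lower-semicontinuity of Steinhaus widths is actually needed.
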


The proof of Theorem \ref{annulus} appears in Section \ref{annulusproof}.
\\

We also prove the following which can be seen as a supplement to Theorem \ref{main} part (ii).
\begin{theorem}\label{k1}
Let
 $C(1/3)$ denote the product of the middle third $C_{1/3}$ with itself (see \eqref{g58} above).
Then
\begin{equation}\label{k1statement}\left( C(1/3) + S^1\right)^{\circ}\neq \emptyset,\end{equation}
where $S^1$ is the unit circle in the plane.
\end{theorem}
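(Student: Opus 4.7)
The plan is to mimic the one-parameter family scheme of Theorem~\ref{main}, reducing the claim to showing that a certain nonlinear sum of two copies of $C_{1/3}$ has non-empty interior, and then to push through the boundary Newhouse case $\tau(C_{1/3})^2=1$ using the explicit identity $C_{1/3}+C_{1/3}=[0,2]$ together with the strict nonlinearity of the circle equation.

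Fix an open interval $I\subset(1/3,2/3)$ (a gap of $C_{1/3}$) and set $C_L:=C_{1/3}\cap[0,1/3]$. For $\alpha\in I$ and $a\in C_L$ the map $g_\alpha(a):=\sqrt{1-(a-\alpha)^2}$ is a $C^2$-diffeomorphism of a neighborhood of $C_L$, with $|g_\alpha'|$ bounded between two positive constants. Define
\[
\Phi_\alpha(a,b):=b+g_\alpha(a),\qquad (a,b)\in C_L\times C_{1/3}.
\]
Since $(\alpha-a,g_\alpha(a))\in S^1$, we have $(\alpha,\Phi_\alpha(a,b))\in C(1/3)+S^1$; hence an open interval $J$ with $J\subset\Phi_\alpha(C_L\times C_{1/3})=C_{1/3}+g_\alpha(C_L)$ for every $\alpha$ in an open interval $I'\subset I$ implies $I'\times J\subset(C(1/3)+S^1)^{\circ}$.

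The heart of the proof is to show that $C_{1/3}+g_\alpha(C_L)$ contains an interval. Because $\tau(C_{1/3})=1$ and Newhouse thickness is preserved up to distortion by $C^2$-diffeomorphisms, one has $\tau(C_{1/3})\cdot\tau(g_\alpha(C_L))=1$, precisely the boundary of the hypothesis in Theorem~\ref{main}(ii). To bridge this boundary I would use the explicit identity $C_{1/3}+C_{1/3}=[0,2]$ at the base scale and transfer it through $g_{\alpha_0}$ for a fixed $\alpha_0\in I$: on each $n$-th level subinterval of $C_L$ the map $g_{\alpha_0}$ is $O(3^{-2n})$-close to its best affine approximation, and the ternary digit covering witnessing $C_{1/3}+C_{1/3}=[0,2]$ can be tracked through this approximation with enough quantitative control to produce a nontrivial interval $J_0\subset C_{1/3}+g_{\alpha_0}(C_L)$. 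Continuity of $\alpha\mapsto g_\alpha$ in the $C^2$-norm and compactness of $C_L$ then transfer a slightly smaller $J\subset J_0$ uniformly to all $\alpha$ in an open neighborhood of $\alpha_0$, completing the proof.

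The main obstacle is this last step. The standard Newhouse gap lemma requires the strict inequality $\tau(A)\tau(B)>1$; for (smooth images of) two copies of $C_{1/3}$ one has equality, and this deficiency is not repaired by rescaling or by smooth perturbation, both of which are neutral with respect to Newhouse thickness. The working argument must therefore use the ambient identity $C_{1/3}+C_{1/3}=[0,2]$---an exceptional arithmetic fact specific to the middle-third set---and propagate it through the nonlinear map $g_\alpha$ while preserving openness of the resulting covering. This delicate coupling between the arithmetic of $C_{1/3}$ and the curvature of $S^1$ is the reason that a separate method, distinct from Theorem~\ref{main}(ii), is needed.
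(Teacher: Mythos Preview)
Your reduction to a one-parameter family $\Phi_\alpha(a,b)=b+g_\alpha(a)$ is correct and is exactly how the paper begins. The divergence is in how you propose to handle the sum $C_{1/3}+g_\alpha(C_L)$, and here there are two problems.

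First, the claim that $\tau(g_\alpha(C_L))=1$ because ``Newhouse thickness is preserved up to distortion by $C^2$-diffeomorphisms'' is not correct as stated. The paper notes explicitly (Section~\ref{N43}) that thickness can drop significantly under a smooth map; this is the reason the $\varepsilon$-thickness is introduced there. In general $\tau(g_\alpha(C_L))<1$, so you are not at the boundary of Newhouse but strictly below it.

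Second, and more seriously, the perturbation step you outline is a genuine gap. You want to say: on a level-$n$ cylinder $g_{\alpha_0}$ is $O(3^{-2n})$-close to an affine map of slope $\lambda$, and then transport the covering $C_{1/3}+C_{1/3}=[0,2]$ (or $C_{1/3}+\lambda C_{1/3}$) through this approximation. But that covering is tight---it is a sumset of two Lebesgue-null sets, with no uniform multiplicity or robustness in the representations $t=a+b$. There is no a priori reason an $O(3^{-2n})$ perturbation of one summand preserves an interval of length comparable to $3^{-n}$; you would need a quantitative stability statement for the sumset that you have not supplied and that does not follow from the digit argument alone.

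The paper avoids this entirely. Instead of invoking $C_{1/3}+C_{1/3}=[0,2]$, it chooses the base point $(u_1,u_2)$ and the parameter $\alpha_0$ so that the derivative of the implicit map lies in $[1+2\epsilon,3-2\epsilon]$, and then runs the Newhouse gap-pair iteration directly (Lemma~\ref{PTsub} and its analogue in Section~\ref{sectionproofk1}). The ternary structure enters not through the sumset identity but through the dichotomy: two gaps of $C_{1/3}$ either have equal length or differ by a factor $\ge 3$. Hence if $|U'|<|U|$ then $|U'|\le|U|/3$, and the derivative bound $|g'|<3-\epsilon$ gives $|g(U')|<|U|$; if $|U'|\ge|U|$ then the bridges adjacent to $U'$ have length $\ge|U|$, and $|g'|>1+\epsilon$ gives bridge images longer than $|U|$. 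Either way the gap pair can be refined, and the usual compactness argument produces an intersection point. This is the ``separate method'' the paper alludes to, and it sidesteps the stability issue that blocks your approach.
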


See Section \ref{sectionproofk1} for the proof of Theorem \ref{k1}.
\\

\begin{remark} What we know about the set $C(\gamma)+S^1$  is as follows:

\begin{enumerate}
  \item if $\gamma < \frac{1}{4}$ then $\dim_{\rm H} \left(C(\gamma)+S^1\right)=1-2\frac{\log 2}{\log \gamma}$. (See \cite[Theorem 2.1]{STmeas}).
  \item if $\gamma=\frac{1}{4}$ then $\dim_{\rm H} \left(C(\gamma)+S^1\right)=2$ but $\mathcal{L}^2\left(C(\gamma)+S^1\right)=0$.
      \cite[Corollary 2.3]{STmeas}).
  \item if $\gamma>\frac{1}{4}$ then $\mathcal{L}^2\left(C(\gamma)+S^1\right)>0$. (See \cite[Corollary 3]{W00} and also see \cite[Theorem 2.1]{STmeas}).
  \item if $\gamma \geq \frac{1}{3}$ then
  $\left(C(\gamma)+S^1\right)^{\circ}\ne \emptyset $ (Theorem \ref{k1} ).
\end{enumerate}
We do not know if there are  $\gamma\in\left(\frac{1}{4},\frac{1}{3}\right)$ with
$\left(C(\gamma)+S^1\right)^{\circ}\ne \emptyset $.
\end{remark}

The proof of Theorem \ref{main} relies on verifying the following proposition.
Proposition \ref{T50} is a strengthening of a classic theorem of Steinhaus \cite{St20, S72} on the interior of difference sets.  Moreover,  Proposition \ref{T50} improves on a result of Erd\H{o}s and Oxtoby on more general difference sets and provides an alternative proof to their main theorem in  \cite{EO54} .

\begin{proposition}\label{T50}
Assume that $J_1,J_2$ are compact intervals on $\mathbb{R}$, and let $\Lambda$ be a parameter interval (a non-empty open interval).
 Let $A\subset J_1$ and $B\subset J_2$.
We assume that the pair of sets $A,B$ satisfy  at least one
 of the following conditions:\\
\noindent (i) $A$ and $B$ are sets of positive Lebesgue measure,\\
(ii) $A$ and $B$ are Cantor sets so that $\tau(A)\cdot \tau(B) >1$,\\
(iii) $A$ and $B$ are sets of second category and $B$ is a Baire set. \\
(iv) $\#(J_1\setminus A)=\aleph_0$  and
$\# B=\aleph$.\\
Let
   $H(\alpha,x,y)\in \mathcal{C}^2(\Lambda\times J_1\times J_2)$ be an arbitrary function  with non-vanishing partial derivatives in $x$ and $y$ on $\Lambda\times J_1\times J_2$. Then  there exists a non-empty open interval $I\subset \Lambda$ sharing the same center as $\Lambda$
such that the interior
 \begin{equation*}
 \left( \bigcap_{\alpha\in I}H(\alpha, A, B)\right)^0\neq \emptyset.
  \end{equation*}
\end{proposition}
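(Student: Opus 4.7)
The plan is to reduce the proposition to a parametrized Steinhaus-type statement for a family of diffeomorphisms and then handle each of the four hypotheses by the appropriate classical tool. Since $H_y$ is continuous and non-vanishing on $\Lambda\times J_1\times J_2$, fix the center $\alpha_0$ of $\Lambda$ and choose a suitable base point $(x_0,y_0)\in J_1\times J_2$ (depending on the case), setting $t_0:=H(\alpha_0,x_0,y_0)$. The implicit function theorem then produces a $C^2$ function $g$ on a neighborhood of $(\alpha_0,t_0,x_0)$ with $H(\alpha,x,g(\alpha,t,x))=t$, and the chain rule gives $g_x=-H_x/H_y\ne 0$, so $\phi_{\alpha,t}(x):=g(\alpha,t,x)$ is a $C^2$ diffeomorphism onto its image that depends $C^1$-continuously on $(\alpha,t)$. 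Under this notation, $t\in H(\alpha,A,B)$ is equivalent to $\phi_{\alpha,t}(A)\cap B\ne\emptyset$, so it suffices to find open intervals $I\ni\alpha_0$ (centered at $\alpha_0$) and $U\ni t_0$ such that the intersection is nonempty for every $(\alpha,t)\in I\times U$; then $U\subset\bigcap_{\alpha\in I}H(\alpha,A,B)$, and its interior contains the desired open set.

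In each of the four cases the base point $(x_0,y_0)$ is chosen to witness the structural feature of $(A,B)$, and the argument then establishes nonemptiness with enough uniformity in $(\alpha,t)$. For case (i), take $(x_0,y_0)$ to be a joint Lebesgue density point of $A\times B$, and run the classical Steinhaus density argument after observing that $\phi_{\alpha,t}$ has Jacobian bounded uniformly on a small neighborhood of $(\alpha_0,t_0)$, so that $\phi_{\alpha,t}(A)$ retains density close to $1$ at the point $\phi_{\alpha,t}(x_0)\approx y_0$ and therefore must overlap $B$. For case (ii), choose $(x_0,y_0)\in A\times B$ and pass to a small enough scale that the $C^1$ distortion of $\phi_{\alpha,t}$ is as close to $1$ as desired over a neighborhood of $(\alpha_0,t_0)$; since Newhouse thickness is lower semicontinuous under small $C^1$ perturbations, this preserves $\tau(\phi_{\alpha,t}(A))\cdot\tau(B)>1$, and the Newhouse gap lemma then yields the intersection provided the two Cantor sets are linked, which is an open condition enforced by the choice of base point. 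For case (iii), $\phi_{\alpha,t}(A)$ remains second category and $\phi_{\alpha,t}^{-1}(B)$ retains the Baire property since $\phi_{\alpha,t}$ is a homeomorphism, so a Piccard--Pettis-type argument (the tool underlying Erd\H{o}s--Oxtoby \cite{EO54}) guarantees that $\phi_{\alpha,t}(A)-B$ contains an interval around $0$, again stable under small deformations of $\phi_{\alpha,t}$. For case (iv), $\phi_{\alpha,t}(A)$ is cocountable inside the interval $\phi_{\alpha,t}(J_1)$; choosing $y_0$ to be a condensation point of $B$ ensures that $\phi_{\alpha,t}(J_1)\cap B$ has cardinality continuum for all nearby $(\alpha,t)$, so removing the countable set $\phi_{\alpha,t}(J_1\setminus A)\cap B$ still leaves a nonempty intersection.

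The primary obstacle is establishing uniformity in the parameter $(\alpha,t)$: each of the classical Steinhaus-type theorems asserts only nonemptiness of an intersection at a single point, whereas here one needs an open product neighborhood of pairs $(\alpha,t)$ on which the conclusion persists. This is handled by the $C^2$ dependence of $\phi_{\alpha,t}$ on $(\alpha,t)$, which makes the underlying structural condition in each case stable under small $C^1$ perturbations of the ambient diffeomorphism. The most delicate step is case (ii), since Newhouse thickness is only approximately preserved by diffeomorphisms, so one must first localize to a scale on which the distortion of $\phi_{\alpha,t}$ is uniformly negligible on the chosen parameter window before the gap lemma can be invoked.
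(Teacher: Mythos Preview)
Your outline matches the paper's approach exactly: reduce via the implicit function theorem to the family $\phi_{\alpha,t}$ (the paper writes $g_{c,\alpha}$), then verify $\phi_{\alpha,t}(A)\cap B\ne\emptyset$ for $(\alpha,t)$ in a product neighborhood of $(\alpha_0,t_0)$, case by case. Cases (i) and (iv) are handled just as you describe.

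There are, however, two places where your sketch glosses over genuine work that the paper has to carry out. In case~(ii) you assert that ``Newhouse thickness is lower semicontinuous under small $C^1$ perturbations,'' but this is precisely the point that fails naively: under a diffeomorphism whose derivative ratio lies in $(1-\varepsilon,1+\varepsilon)$, a gap $G'$ that was just barely shorter than the reference gap $G$ can become longer than the image of $G$, so the bridge at $g(u)$ need not be $g(B(u))$ and the thickness can drop. The paper handles this by introducing an $\varepsilon$-\emph{thickness} $\tau_\varepsilon(K)$ (where the competing gap must have length $\ge(1-\varepsilon)|G|$ rather than $\ge|G|$), proving $\tau_\varepsilon(K)\to\tau(K)$ as $\varepsilon\to 0$, and showing $\tau_{\varepsilon^2}(g(K))\ge(1-\varepsilon)\tau_\varepsilon(K)$ under the distortion bound. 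Only after this does the gap lemma apply. Your localization to small scale is the right first move, but it does not by itself yield the thickness bound you claim.

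In case~(iii) the phrase ``a Piccard--Pettis-type argument \dots\ again stable under small deformations'' hides the real issue. Piccard's theorem gives you the intersection at a single parameter value; the uniformity in $(\alpha,t)$ is not automatic and is where the paper spends its effort. Concretely, the paper shows that the set $Z=\{(c,\alpha):A\cap g_{c,\alpha}^{-1}(B)\text{ is second category}\}$ is \emph{open}, which requires picking a point $y\in R_A^\circ\cap g_{c_0,\alpha_0}^{-1}(E_2)$ (where $B=E_2\mathbin{\triangle}M_2$) and exploiting the continuity of $(c,\alpha,x)\mapsto g_{c,\alpha}(x)$ to find a fixed neighborhood $W\ni y$ with $W\subset g_{c,\alpha}^{-1}(E_2)\cap R_A^\circ$ for all nearby $(c,\alpha)$; one then argues $W\cap A$ is second category and $W\cap g_{c,\alpha}^{-1}(B)$ is residual in $W$. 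Your sketch does not indicate how the openness in the parameter is obtained.
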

\vskip.125in

For proofs and background, see section \ref{P50}.
\begin{remark}
Only part (ii) of Proposition \ref{T50} requires the stronger assumption of
  $H(\alpha,x,y)\in\mathcal{C}^2(\Lambda\times J_1\times J_2)$.   Otherwise $\mathcal{C}^1$ is enough.
\end{remark}

\begin{remark}
In the case that $H(\alpha, x,y) = x+y$, part $(i)$ of Proposition \ref{T50} implies the classic theorem of Steinhaus \cite{St20, S72} on the interior of difference sets.  When $H(\alpha, x,y) = H(x,y)$, it implies the result proved by Erd\H{o}s and Oxtoby in \cite{EO54}.
\end{remark}


\subsubsection{Pinned distance sets}\label{g89}

Proposition \ref{T50} yields an interesting consequence for pinned distance sets.
The celebrated Falconer distance conjecture (see e.g. \cite{Falc86}, \cite{Mat95})
asks how large a subset $E$ of Euclidean space needs to be in order to guarantee that its distance set, defined by $\Delta(E)=\{|x-y|: x,y \in E\}$, has positive Lebesgue measure.  In 1986, Falconer proved  $\dim_{\rm H} E>\frac{d+1}{2}$ suffices and that $\dim_{\rm H} E>\frac{d}{2}$ is necessary where $E \subset {\Bbb R}^d$ and $d \ge 2$.
The best known exponent is $\frac{d}{2}+\frac{1}{3}$ and is due to Wolff when $d=2$ \cite{W04} and Erdogan \cite{Erd05} for $d\geq3$.
In \cite{IMT12},  Iosevich, Mourgoglou and Taylor
study the interior of the distance set and prove Falconer's result for more general notions of distance.
\\

Another interesting variant of the Falconer distance problem is obtained by pinning the distance set. More precisely, given $x \in E$, let $\Delta_x(E)=\{|x-y|: y \in E\}$.
Peres and Schlag (\cite{PeSc00}) proved that for many values of $x$, the Lebesgue measure of the pinned distance set $\Delta_x(E)$ is positive whenever $E$ is a set of Hausdorff dimension greater than $1$.
In \cite{IoTaUr}, Iosevich, Taylor, and Uriarte-Tuero
give a simple proof of the Peres-Schlag result and generalize it to a wide range of distance type functions as well as obtaining a variant for more general geometric configurations.
\\

A consequence of Proposition \ref{T50} for the interior of pinned distance sets
is as follows:

\begin{corollary}\label{N41}
  Let $C\subset \mathbb{R}$ be a Cantor set, and let $\alpha >1$. We consider the
  the  pinned distance set at $\mathbf{t}$
  with respect to the $\alpha$-norm:
 \begin{equation*}
   \Delta^{(\alpha)}_\mathbf{t}(C\times C):=
   \left\{
   \|\mathbf{c}-\mathbf{t}\|_\alpha:
   \mathbf{c}\in C\times C
   \right\},
 \end{equation*}
 where $\|(x,y)\|_\alpha=\left(|x|^\alpha+|y|^\alpha\right)^{1/\alpha}$.
If the   thickness $\tau(C)>1$, then for every  $\mathbf{t}\in\mathbb{R}^2$ the interior
\begin{equation*}
  \left(\Delta^{(\alpha)}_\mathbf{t}(C\times C)\right)^{\circ}\ne\emptyset.
\end{equation*}
Moreover there exists a non-empty open  parameter interval $I$ centered at $\alpha$
such that
\begin{equation}\label{N39}
   \left(\bigcap_{\beta\in I}\Delta^{(\beta)}_\mathbf{t}(C\times C)\right)^{\circ}\ne\emptyset.
\end{equation}
\end{corollary}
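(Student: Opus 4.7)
The plan is to apply Proposition \ref{T50} part (ii) with a suitable function $H$ and sub-Cantor-sets of $C$ that avoid the pinned coordinates. Write $\mathbf{t}=(t_1,t_2)$ and define
\[
H(\beta,x,y):=\bigl(|x-t_1|^\beta+|y-t_2|^\beta\bigr)^{1/\beta}.
\]
Fix a bounded open interval $\Lambda\subset(1,\infty)$ centered at $\alpha$. For any closed intervals $J_1,J_2$ with $t_1\notin J_1$ and $t_2\notin J_2$, the function $H$ belongs to $\mathcal{C}^2(\Lambda\times J_1\times J_2)$; moreover, its $x$- and $y$-partial derivatives equal a strictly positive factor times $|x-t_1|^{\beta-1}\mathrm{sgn}(x-t_1)$ and $|y-t_2|^{\beta-1}\mathrm{sgn}(y-t_2)$ respectively, and hence are non-vanishing on $\Lambda\times J_1\times J_2$.

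The crux is the choice of $J_1,J_2$ so that $A:=C\cap J_1$ and $B:=C\cap J_2$ are Cantor sets with $\tau(A)\cdot\tau(B)>1$. I would take each $J_i$ to be a \emph{bridge} of the Newhouse construction of $C$ lying strictly on one side of $t_i$: if $t_i\notin[\min C,\max C]$, let $J_i$ be the convex hull of $C$; if $t_i$ lies in a gap of $C$, let $J_i$ be a bridge adjacent to that gap; if $t_i\in C$, use the total disconnectedness (and perfection) of $C$ to select a bridge at some deep level of the construction that does not contain $t_i$. A standard feature of Newhouse thickness is that restriction of $C$ to any such bridge inherits thickness at least $\tau(C)$, which yields $\tau(A)\cdot\tau(B)\geq\tau(C)^2>1$.

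With these choices, applying Proposition \ref{T50}(ii) to $(H,A,B,\Lambda)$ produces an open interval $I\subset\Lambda$ centered at $\alpha$ such that $\left(\bigcap_{\beta\in I}H(\beta,A,B)\right)^{\circ}\neq\emptyset$. Since $A\times B\subset C\times C$ and $H(\beta,x,y)=\|(x,y)-\mathbf{t}\|_\beta$, we have $H(\beta,A,B)\subset\Delta^{(\beta)}_{\mathbf{t}}(C\times C)$ for every $\beta\in I$, so
\[
\left(\bigcap_{\beta\in I}\Delta^{(\beta)}_{\mathbf{t}}(C\times C)\right)^{\circ}\supset\left(\bigcap_{\beta\in I}H(\beta,A,B)\right)^{\circ}\neq\emptyset,
\]
which is exactly \eqref{N39}; specializing to $\beta=\alpha\in I$ yields the first assertion.

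The principal obstacle is the thickness-inheritance claim $\tau(C\cap J_i)\geq\tau(C)$ for the selected bridges. This is essentially routine from Newhouse's definition (restriction to a bridge only removes constraints: the surviving gaps form a subset of the original ones and their adjacent bridges can only grow), but the case $t_i\in C$ requires one to produce arbitrarily small bridges near $t_i$ that avoid $t_i$, which uses both total disconnectedness and perfection of $C$ and will need a brief case analysis when $t_i$ happens to be an endpoint of a gap.
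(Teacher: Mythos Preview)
Your approach is essentially the same as the paper's: apply Proposition \ref{T50}(ii) with $H(\beta,x,y)$ built from the $\beta$-norm (the paper uses the slightly simpler $H(\beta,x,y)=(x-t_1)^\beta+(y-t_2)^\beta$, tacitly restricting to $x>t_1$, $y>t_2$). Your version is in fact more careful than the paper's very terse proof, since you explicitly address the choice of $J_1,J_2$ avoiding $t_1,t_2$ and the thickness-inheritance for $C\cap J_i$, both of which the paper leaves implicit.
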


\begin{remark}\label{g62}
 It is easy to see that whenever $\frac{1}{3}<\gamma<1$ the thickness
 $\tau(C_\gamma)>1$ ($C_\gamma$ was definition \ref{g60}). Hence, by Corollary \ref{N41}, the pinned distance set $\Delta_{\mathbf{t}}(C(\gamma))$ contains an interval for every $\mathbf{t}$, (recall that $C(\gamma)$ was defined in Definition \ref{g60} ).
\end{remark}

\begin{remark} Note that Corollary \ref{N41} still holds if $K\times K$ is replaced by $K_1\times K_2$, where $K_1,K_2$ are Cantor sets of sufficient thickness that is $\tau(K_1)\cdot \tau(K_2)>1$.
\end{remark}

\begin{proof}[Proof of Corollary \ref{N41}]
  We fix an arbitrary $\mathbf{t}=(t_1,t_2)\in\mathbb{R}^2$ and
   $\alpha >1$ and choose a parameter interval $\Lambda$ centered at $\alpha$ such that $1 \not\in\Lambda$.
  Let
  $$
  H(\beta,x,t):=(x-t_1)^\beta+(y-t_2)^\beta ,\quad \beta\in\Lambda.
  $$
 Then \eqref{N39} immediately follows from the second part of Proposition \ref{T50}.
\end{proof}

Actually we can prove an analogous theorem for the middle third Cantor set $C_{1/3}$ with completely different technique:

\begin{theorem}\label{g61} Let $C_{1/3}$ be the middle-third Cantor set.
Then the pinned distance set of $C(1/3):=C_{1/3}\times C_{1/3}$,
  $$
  \Delta_{\mathbf{t}}(C({1/3}))=
  \left\{
  \|\mathbf{c}-\mathbf{t}\|_2
  :
  \mathbf{c}\in C(1/3)
  \right\},
  $$
   pinned at an arbitrary $\mathbf{t}\in C(1/3)$, has non-empty interior.
\end{theorem}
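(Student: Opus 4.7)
The plan is to reduce to a squared-distance question and exploit the classical identity $C_{1/3}+C_{1/3}=[0,2]$ at every scale of the ternary construction. Writing $F_\mathbf{t}(\mathbf{x}):=|\mathbf{x}-\mathbf{t}|^2$, the conclusion $\Delta_\mathbf{t}(C(1/3))^{\circ}\neq\emptyset$ is equivalent, via the $\mathcal{C}^\infty$ diffeomorphism $s\mapsto\sqrt{s}$, to $F_\mathbf{t}(C(1/3))$ having non-empty interior in $(0,\infty)$. Note that Corollary~\ref{N41} is unavailable at $\gamma=1/3$: the Newhouse thickness satisfies $\tau(C_{1/3})=1$, so $\tau(C_{1/3})^2=1$ only marginally fails the strict inequality of Proposition~\ref{T50}(ii), and a different technique is needed.

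Given $\mathbf{t}\in C(1/3)$ and a scale $\alpha=3^{-n}$, I would choose a sub-square $Q$ of the $n$-th level construction $K_n\supset C(1/3)$ such that (a) $Q\cap C(1/3)=\mathbf{a}+\alpha\,C(1/3)$ for some corner $\mathbf{a}$, and (b) the vector $\mathbf{t}-\mathbf{a}$ is approximately parallel to one of the diagonals $(1,\pm1)$. Such a $Q$ is obtained by selecting, at level $n$ of the ternary construction, the sub-square diagonally opposite to the cell containing $\mathbf{t}$. Expanding,
\[
F_\mathbf{t}(\mathbf{a}+\alpha\mathbf{c})=|\mathbf{t}-\mathbf{a}|^2-2\alpha\,\langle\mathbf{t}-\mathbf{a},\mathbf{c}\rangle+\alpha^2|\mathbf{c}|^2,
\]
the linear term is (up to a positive constant) $c_1\pm c_2$, whose image over $C_{1/3}\times C_{1/3}$ is the \emph{full} interval $[0,2]$ (respectively $[-1,1]$). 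Hence the principal part of $F_\mathbf{t}$ already covers an interval of length $\asymp\alpha$ as $\mathbf{c}$ varies over $C(1/3)$, and the remaining task is to absorb the quadratic remainder $\alpha^2|\mathbf{c}|^2=O(\alpha^2)$ without destroying the interval.

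The main obstacle is this absorption step, and I would handle it by iteration. Inside $Q$ one repeats the same construction at scale $\alpha^2=3^{-2n}$, selecting within it a further sub-square diagonally opposite to $\mathbf{t}$, and so on, producing a nested sequence $Q=Q^{(1)}\supset Q^{(2)}\supset\cdots$ of diagonally aligned sub-squares. At each new level $k$, the fresh linear-term interval has length $\asymp 3^{-kn}$, which absorbs the $O(3^{-2(k-1)n})$ error inherited from level $k-1$. Organising this scheme so that, for \emph{every} target squared distance $t$ in a non-degenerate initial interval, the iteration can be carried out indefinitely -- i.e., the cumulative geometric series of corrections converges strictly inside the admissible linear interval at every step -- is delicate. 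I expect this to close precisely because $C_{1/3}+C_{1/3}$ is a \emph{solid} interval at every scale, an arithmetic fact strictly stronger than Newhouse thickness and the natural substitute for the gap-lemma argument at the critical value $\tau(C_{1/3})=1$. The non-vanishing curvature of $S^1$ enters in ensuring that a slight change of $t$ perturbs the circular level sets of $F_\mathbf{t}$ transversely, so that the diagonal-alignment condition at each scale remains robust under such perturbations; this is what upgrades a point-wise construction into an entire interval of admissible $t$.
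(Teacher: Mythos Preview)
Your approach is genuinely different from the paper's, and while the underlying idea is plausible, the sketch has a real gap at exactly the point you flag as ``delicate.''

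The paper does not linearize. It writes $g_v(x)=t_2+\sqrt{v^2-(x-t_1)^2}$, so that $v\in\Delta_\mathbf{t}(C(1/3))$ whenever $g_v(C_{1/3})\cap C_{1/3}\ne\emptyset$, and then locates (by density of gap endpoints) a pair $(u_1,u_2)\in C_{1/3}^2$ at which $|g'_{v_0}(u_1)|\in[1+2\varepsilon,\,3-2\varepsilon]$. The core is a tailored gap-lemma: because every bounded gap of $C_{1/3}$ has length a power of $1/3$ and equals its adjacent bridge, a derivative strictly between $1$ and $3$ forces, for any gap pair $(U,g(U'))$, either $|U'|\le|U|/3$ so $|g(U')|<|U|=|\text{bridge of }U|$, or $|U'|\ge|U|$ so $|\text{bridge of }g(U')|>|U|$. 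This arithmetic dichotomy substitutes for the failing thickness inequality, and the usual Newhouse iteration on gap pairs then gives the intersection for an interval of $v$.

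Your route replaces this with Taylor expansion and the identity $C_{1/3}+C_{1/3}=[0,2]$. Two issues block it as written. First, the linear term is not $c_1\pm c_2$ but $(t_1-a_1)c_1+(t_2-a_2)c_2$; its image is still an interval, but only because the coefficient ratio lies in $[1/3,3]$, and you need it \emph{strictly} inside $(1/3,3)$ with uniform margin to survive the quadratic correction at every level. Your ``diagonally opposite'' choice gives an initial ratio in $[1/2,2]$, but as you descend into sub-cells of $Q$ the effective gradient ratio $(a_1+\alpha c_1-t_1)/(a_2+\alpha c_2-t_2)$ drifts over all of $[1/3,3]$ (e.g.\ for $\mathbf{t}=(1/3,0)$ it equals $1/3$ at $\mathbf{c}=(0,1)$); at the endpoints the four sub-cell images merely abut and any nonlinear perturbation can open a gap. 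Second, your own scaling shows the level-$2$ linear interval and the level-$1$ quadratic error are the \emph{same} order $3^{-2n}$, so whether the iteration closes comes down to explicit constants you never compute. The paper's derivative window $[1+\varepsilon,3-\varepsilon]$ and its gap-length dichotomy are doing precisely the work your iteration leaves unverified; absent that, ``I expect this to close'' is not a proof.
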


The proof is presented in Section \ref{g58'}.

We remark that in \cite{RaSi} the first author and M. Rams investigated a well-studied family of random cantor sets, called "Mandelbrot percolation Cantor sets" on the plane.
 They obtained that in that family,
if the dimension of the attractor is greater than one, then
 almost surely  the pinned distance set pinned at any points of the plane,   contains intervals.

We also remark that in a recent preprint, P. Shmerkin \cite{She17} proved that the Hausdorff dimension of the pinned distance set $\Delta_t(E)$ is equal to $1$ for most elements of $E$ (in a natural sense) if $\dim_{\rm H} (E)>1$ and the packing and Hausdorff dimensions of $E$ are equal. (These assumptions clearly hold for $C(1/3)$.

\subsubsection{$(A+S^1)^{\circ}\ne\emptyset$ for $A$ connected}

As a corollary of part (iv) of Theorem \ref{main} we can easily obtain that

\begin{corollary}\label{a098}
  Let $K$ be the Knaster-Kuratowski fan (or Cantor's teepee), defined below.
  Then
  \begin{equation*}
    \left(K+S^1\right)^{\circ}\ne\emptyset.
  \end{equation*}
\end{corollary}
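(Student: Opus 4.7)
The plan is to apply part (iv) of Theorem~\ref{main} to a subset of $K$ after a coordinate swap. Recall the Knaster-Kuratowski fan: take the middle-third Cantor set $C=C_{1/3}\subset[0,1]$, the apex $p=(1/2,1/2)$, and for each $c\in C$ the closed segment $L_c$ from $(c,0)$ to $p$. Split $C=E\sqcup F$, where $E$ is the countable set of endpoints of intervals removed during the construction of $C$, and $F:=C\setminus E$ has cardinality $\aleph$. Then $K$ keeps, on each rib $L_c$, those points with rational second coordinate when $c\in E$ and with irrational second coordinate when $c\in F$. Parametrizing $L_c$ by its height, a point at height $y\in[0,1/2]$ is $((1-2y)c+y,\,y)$.

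First, I would isolate the sub-fan
\[
K':=\bigl\{\bigl((1-2y)c+y,\,y\bigr):c\in F,\; y\in(\mathbb{R}\setminus\mathbb{Q})\cap(0,1/2)\bigr\}\subset K,
\]
the union of the irrational-height portions of all non-endpoint ribs. Second, apply the swap $\sigma(u,v):=(v,u)$ and set $z(x,y):=(1-2x)y+x$. Then
\[
\sigma(K')=\bigl\{(x,z(x,y)):x\in A,\; y\in B\bigr\}=P_z(A,B),
\]
where $A:=(\mathbb{R}\setminus\mathbb{Q})\cap(0,1/2)$ and $B:=F$. The map $z$ is $C^\infty$ with $z_y(x,y)=1-2x\ne 0$ for every $x\in A$, so $P_z(A,B)$ is a generalized product in the sense of Definition~\ref{wavy}.

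Third, I would check hypothesis (iv) of Theorem~\ref{main} with the interval $J_1:=(0,1/2)$: one has $\#(J_1\setminus A)=\#(\mathbb{Q}\cap J_1)=\aleph_0$ and $\#B=\#F=\aleph$. Both $A$ and $B$ are bounded. Since $S^1$ is a $C^2$ curve of constant non-vanishing curvature, Theorem~\ref{main}(iv) delivers
\[
(P_z(A,B)+S^1)^\circ=(\sigma(K')+S^1)^\circ\ne\emptyset.
\]
Because $\sigma$ is linear and $\sigma(S^1)=S^1$, we have $\sigma(K')+S^1=\sigma(K'+S^1)$; the homeomorphism $\sigma^{-1}$ then transfers the non-empty interior back to $(K'+S^1)^\circ\ne\emptyset$, and the inclusion $K'\subset K$ concludes the proof.

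The only delicate point is locating the generalized product structure inside $K$: the fan itself is not of the form $P_z(A,B)$, because for a fixed first coordinate $x_0$ the heights of $K$ above $x_0$ form a complicated set sculpted out of the Cantor parameter by the rational/irrational dichotomy. By contrast, the horizontal slices $K\cap\{y=y_0\}$ at irrational height $y_0$ are precisely the affine copies $(1-2y_0)F+y_0$ of a single uncountable set. Swapping coordinates turns these well-behaved horizontal slices into the vertical slices required by Definition~\ref{wavy}, which is exactly why the argument proceeds cleanly through condition (iv) rather than any of (i)--(iii).
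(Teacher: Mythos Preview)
Your proof is correct and takes essentially the same route as the paper: isolate a subset of $K$ of the form $P_z(A,B)$ and apply Theorem~\ref{main}(iv). The only difference is bookkeeping stemming from the definition of the fan: the paper places the apex at the origin with ribs $\ell_c=\{(x,cx):x\in[0,1]\}$, so that $K_F=\{(x,cx):x\in[0,1]\setminus\mathbb Q,\ c\in F\}$ is \emph{already} $P_z([0,1]\setminus\mathbb Q,\,F)$ with $z(x,y)=xy$ and no coordinate swap is needed, whereas you use the standard Steen--Seebach model (apex at $(1/2,1/2)$, ribs parametrized by height), which forces the reflection $\sigma$ to convert the well-behaved horizontal slices into the vertical-slice format of Definition~\ref{wavy}.
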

The Knaster-Kuratowski fan was defined in  \cite[p. 145]{Stee78}
\begin{definition}[Kastner-Kuratowski fan]
Let $C$ be the translated copy of the middle-third Cantor $C_{1/3}$ set situated on the interval $I:=\left\{\left(\frac{1}{2},x\right):x\in\left[-\frac{1}{2},\frac{1}{2}\right]\right\}$.
Denote the set of the elements of $C$ which are deleted interval end points by $E$.
Let  $F:=C\setminus E$. For a $c\in C$ let $\ell _c$ be the line segment which connects the origin with $c$. That is
$$
\ell _c:=\left\{(x,cx):x\in[0,1]\right\}
$$
 We introduce
$$
K_E:=\bigcup_{c\in E}\left\{(x,y)\in \ell_c:x\in \mathbb{Q}
\right\},\mbox{  and }
K_F:=\bigcup_{c\in F}\left\{(x,y)\in \ell_c:x\not\in \mathbb{Q}
\right\}.
$$
The set $K:=K_E\cup K_F$ is called Knaster-Kuratowski fan, Cantor's teepee.
\end{definition}
It was proved in \cite[p. 146]{Stee78} that Knaster-Kuratowski fan $K$ satisfies:
\begin{description}
  \item[(a)] $K$ is connected,
  \item[(b)] $K\setminus (0,0)$ is totally disconnected (all connected components are singletons). As a consequence,
  \item[(c)] $K$ does not contain any paths (continuous image of $[0,1]$ which is non-constant).
\end{description}
\begin{proof}[Proof of Corollary \ref{a098}]
  Observe that $X_F=P([0,1]\setminus\mathbb{Q},C)$, where $P$ is as in \eqref{wavy}
  with $z(x,y):=x \cdot y$. Clearly, $z_y(x,y)$ is not zero if we are off the origin.  So, we can apply part (iv) of Theorem \ref{main} for such a part of $X_F$.
  \end{proof}

  We could prove that $\left(K+S^1\right)^{\circ}\ne\emptyset$ directly from it connectedness  and the following lemma:

  \begin{lemma}\label{g99}
     Assume that $A\subset\mathbb{R}^2$ is not totally disconnected, that is
      $A$ contains a connected component which is not a singleton.
      Then
    \begin{equation}\label{095}
      \left(A+S^1\right)^{\circ}\ne\emptyset.
    \end{equation}

  \end{lemma}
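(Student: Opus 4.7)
The plan is to use the Intermediate Value Theorem applied to the distance function on a connected set. Let $K\subset A$ be the non-singleton connected component, and fix any two distinct points $p,q\in K$. The goal is to produce an open set $U\subset\mathbb{R}^2$ such that every $y\in U$ admits some $k\in K$ with $|y-k|=1$, which will force $y\in K+S^1\subset A+S^1$.

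First, define
\[
U := B(p,1)\setminus \overline{B(q,1)},
\]
where $B(\cdot,\cdot)$ denotes the open Euclidean ball. This set is open as the intersection of the open set $B(p,1)$ with the open set $\mathbb{R}^2\setminus\overline{B(q,1)}$. The next step is to verify that $U$ is non-empty by moving slightly from $p$ in the direction away from $q$: for $y_0 := p+t(p-q)/|p-q|$ with $t$ chosen in the non-empty open interval $(\max\{0,1-|p-q|\},\,1)$, one has $|y_0-p|=t<1$ and $|y_0-q|=t+|p-q|>1$, so $y_0\in U$.

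Now fix $y\in U$, so that $|y-p|<1<|y-q|$. The function $g_y\colon K\to\mathbb{R}$ given by $g_y(k):=|y-k|$ is continuous, so since $K$ is connected, $g_y(K)$ is a connected subset of $\mathbb{R}$, hence an interval. This interval contains both $|y-p|$ and $|y-q|$, and therefore contains every value in between, in particular $1$. Thus some $k\in K$ satisfies $|y-k|=1$, giving $y = k+(y-k)\in K+S^1\subset A+S^1$. Consequently $U\subset (A+S^1)^{\circ}$, and the lemma follows. The argument is essentially routine; the only point requiring any care is ensuring $U\neq\emptyset$ when $|p-q|$ is small, and this is handled by the explicit choice of $t$ above.
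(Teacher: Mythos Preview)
Your proof is correct and follows essentially the same approach as the paper's. Both arguments fix a nontrivial connected piece $K$, produce an open set of points $y$ lying strictly inside the unit circle about one point of $K$ and strictly outside the unit circle about another, and then use connectedness of $K$ to conclude that the distance from $y$ to $K$ attains the value $1$; you phrase this last step via the Intermediate Value Theorem on $k\mapsto|y-k|$, while the paper phrases the contrapositive as a disconnection of $K$ by the open sets $\{\,\|\cdot-y\|<1\,\}$ and $\{\,\|\cdot-y\|>1\,\}$, which is the same thing. Your open set $U=B(p,1)\setminus\overline{B(q,1)}$ is simply a concrete subset of the paper's $\widehat{A}$, and you make the non-emptiness verification explicit where the paper leaves it implicit.
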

  The proof is presented in Section \ref{g57}.
  It is immediate that for a  set $A$ containing a path (continuous image of $[0,1]$), \eqref{095} holds. However, as the example of Kastner-Kuratowski fan shows, it is possible that a connected set  contains no paths.

  \subsubsection{The Giant: $(A+S^1)^{\circ}$ may be empty for a very big  set $A$}\label{Giant}
We give a simple construction of a $G_\delta$-set $A\subset \mathbb{R}^2$ of full two-dimensional Lebesgue measure so that $A+S^1$ has empty interior.
\vskip.1in
Set $\mathbb{Q}_2:=\mathbb{Q}\times \mathbb{Q}$, and let
\begin{equation}\label{087}
 A:= \left(
\bigcup\limits_{\mathbf{v}\in\mathbb{Q}_2}
S(\mathbf{v},1)
\right)^c,
\end{equation}
where $S(\mathbf{v},1)$ denotes the set $\{x\in \mathbb{R}^2: |x-v|=1\}$.
Then $\mathcal{L}\mathrm{eb}_2
(A^c)=0$, and $$(A+S^1)\cap \mathbb{Q}_2=\emptyset.$$ We call the set defined in \eqref{087} the Giant.
%
 In particular more is true.
 \begin{fact}
   Let $B\subset \mathbb{R}^2$ be arbitrary and $S \subset\mathbb{R}^2$ be an arbitrary set which is symmetric to the origin ($s\in S$ if and only if $-s\in S$). Then
   \begin{equation*}
     \left(\left(B+S\right)^c+S\right)\cap B=\emptyset.
   \end{equation*}
 \end{fact}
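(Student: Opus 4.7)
The plan is to verify the fact by a direct contradiction, exploiting only the symmetry of $S$ about the origin and the definition of complement. No analytic machinery is needed; this is purely set-theoretic.

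First I would suppose, for contradiction, that the intersection is non-empty, so there is some $b \in B$ that also lies in $(B+S)^c + S$. By the definition of Minkowski sum, this means there exist $x \in (B+S)^c$ and $s \in S$ with $b = x + s$, equivalently
\begin{equation*}
x = b - s = b + (-s).
\end{equation*}

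Next I would invoke the symmetry hypothesis: since $s \in S$, also $-s \in S$. Therefore $x = b + (-s)$ is an element of $B + S$, as it is expressible as a sum of an element of $B$ and an element of $S$. But $x$ was chosen in $(B+S)^c$, which is the contradiction. Hence no such $b$ exists and the intersection is empty.

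I expect no genuine obstacle: the only subtle point is remembering that the symmetry of $S$ is what allows the passage from $b = x + s$ to $x \in B + S$, which is exactly where the hypothesis $S = -S$ is used. The fact then specializes, with $B = A$ and $S = S^1$, to the Giant construction: taking $B = \mathbb{Q}_2$ and $S = S^1$ shows that $A = (\mathbb{Q}_2 + S^1)^c$ satisfies $(A + S^1) \cap \mathbb{Q}_2 = \emptyset$, so $A + S^1$ has empty interior even though $A$ has full Lebesgue measure.
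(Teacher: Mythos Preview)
Your proof is correct and is exactly the natural argument; the paper in fact states this as a Fact without giving any proof, so there is nothing to compare against beyond noting that your contradiction via the symmetry $S=-S$ is the intended one-line justification.
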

 We obtain the Giant by choice of $B:=\mathbb{Q}_2$ and $\mathcal{S}:=S^1$. By the regularity of the Lebesgue measure, we can choose a compact subset $K$ of the Giant with positive Lebesgue measure. That is
 \begin{fact}
   There is a compact $K\subset\mathbb{R}^2$ with $\mathcal{L}\mathrm{eb}_2(K)>0$ such that
    $\left(K+S^1\right)^{\circ}=\emptyset$.
 \end{fact}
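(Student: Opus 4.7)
The plan is to extract the compact set $K$ directly from the Giant $A$ defined in \eqref{087} via inner regularity of Lebesgue measure, and then inherit the empty interior property from the fact that $A+S^1$ misses the dense set $\mathbb{Q}_2$.

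First I would recall that $A := \bigl(\bigcup_{\mathbf{v}\in\mathbb{Q}_2} S(\mathbf{v},1)\bigr)^c$ is a countable intersection of open sets (each $S(\mathbf{v},1)^c$ is open) and has full $\mathcal{L}^2$-measure, since the union consists of countably many circles, each of $\mathcal{L}^2$-measure zero. The key relation to exploit is the symmetry identity
\begin{equation*}
\left(\left(\mathbb{Q}_2+S^1\right)^c + S^1\right)\cap \mathbb{Q}_2 = \emptyset,
\end{equation*}
which is exactly the displayed Fact applied to $B=\mathbb{Q}_2$ and $\mathcal{S}=S^1$; this gives $(A+S^1)\cap \mathbb{Q}_2 = \emptyset$.

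Next, by the inner regularity of the Lebesgue measure on $\mathbb{R}^2$, since $\mathcal{L}^2(A) = \infty$ (indeed $A$ has full measure), I can choose a compact subset $K \subset A$ with $0 < \mathcal{L}^2(K) < \infty$; for concreteness one may intersect $A$ with a large closed ball and then apply inner regularity to that set of finite positive measure to produce the required compact set.

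Finally, since $K \subset A$, we have $K + S^1 \subset A + S^1$, and therefore
\begin{equation*}
(K+S^1)\cap \mathbb{Q}_2 \subset (A+S^1)\cap \mathbb{Q}_2 = \emptyset.
\end{equation*}
Because $\mathbb{Q}_2$ is dense in $\mathbb{R}^2$, any nonempty open subset of $\mathbb{R}^2$ must intersect $\mathbb{Q}_2$; hence $(K+S^1)^{\circ} = \emptyset$, completing the proof. There is essentially no obstacle here beyond invoking inner regularity correctly; the whole content of the statement rests on the Giant construction already carried out, and the only subtle point is making sure the compact set $K$ extracted from $A$ still inherits the dense-complement-avoidance property, which is automatic from the monotonicity $K\subset A \Rightarrow K+S^1 \subset A+S^1$.
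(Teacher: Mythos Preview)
Your proof is correct and follows essentially the same approach as the paper: extract a compact subset of the Giant via inner regularity of Lebesgue measure and inherit the property $(K+S^1)\cap\mathbb{Q}_2=\emptyset$ from $K\subset A$. The paper in fact only states the one-line justification ``by the regularity of the Lebesgue measure,'' so your version simply fills in the details that were left implicit.
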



We note that the co-dimension of the Giant (defined in \eqref{087}) is one.  If, on the other hand, the co-dimension of an arbitrary set $A$ is less than one, then $A + S^1 = \mathbb{R}^2$:

\begin{fact} Let $S \subset \mathbb{R}^2$ which is symmetric to the origin.
For an arbitrary set $A\subset \mathbb{R}^2$,
if $\mathrm{codim}(A)<\dim_{\rm H}(S) $, then $A+S=\mathbb{R}^2$.
\end{fact}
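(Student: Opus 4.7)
The plan is to prove the statement by a direct contradiction argument applied to a single arbitrary point. Fix any $p\in\mathbb{R}^2$; we show $p\in A+S$. Unwinding definitions, $p=a+s$ for some $a\in A$ and $s\in S$ is equivalent to $A\cap(p-S)\ne\emptyset$, and since $S=-S$ we have $p-S=p+S$, so the goal reduces to
\[
A\cap(p+S)\ne\emptyset.
\]

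Suppose this fails for some $p$. Then $p+S\subset \mathbb{R}^2\setminus A$, and by translation invariance together with monotonicity of Hausdorff dimension,
\[
\dim_{\rm H}(S)=\dim_{\rm H}(p+S)\le\dim_{\rm H}(\mathbb{R}^2\setminus A)=\mathrm{codim}(A),
\]
which contradicts the hypothesis $\mathrm{codim}(A)<\dim_{\rm H}(S)$. Since $p\in\mathbb{R}^2$ was arbitrary, we conclude $A+S=\mathbb{R}^2$.

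The principal obstacle is notational rather than analytic: one must read $\mathrm{codim}(A)$ as the Hausdorff dimension of the complement $\mathbb{R}^2\setminus A$, which is the convention under which the statement is sharp. This reading is consistent with the Giant example discussed just above: there $\mathbb{R}^2\setminus A$ is a countable union of unit circles of Hausdorff dimension $1$ and $\dim_{\rm H}(S^1)=1$, so the strict inequality $\mathrm{codim}(A)<\dim_{\rm H}(S)$ just fails and the conclusion $A+S^1=\mathbb{R}^2$ indeed fails -- precisely what the Giant is designed to witness. Once the convention is fixed, no deeper tools are needed: the proof relies only on the symmetry of $S$, the translation invariance of $\dim_{\rm H}$, and monotonicity of $\dim_{\rm H}$ under set inclusion.
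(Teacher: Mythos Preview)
Your proof is correct and follows essentially the same route as the paper: pick an arbitrary point, use the symmetry $S=-S$ to reduce membership in $A+S$ to $(p+S)\cap A\ne\emptyset$, and derive a contradiction from $p+S\subset A^c$ via monotonicity and translation invariance of Hausdorff dimension. Your explicit remark that $\mathrm{codim}(A)$ must be read as $\dim_{\rm H}(\mathbb{R}^2\setminus A)$ is exactly the convention the paper uses (implicitly) and is confirmed by the Giant example.
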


\begin{proof}
Indeed, if $x\in \mathbb{R}^2$ then either $(x+S ) \cap A =\emptyset$, or $(x+S) \cap A \neq \emptyset$.  The former case contradicts the assumption that $\mathrm{codim}(A)<\dim_{\rm H}(S) $.  The later case, combined with the symmetry of $S$, imlplies that $x\in A+S $.
\end{proof}

\subsection{The case when $\Gamma$ is a  polygon }\label{w99}
In this section we assume that $\Gamma$ is a piecewise linear curve. We call it a polygon.  We construct full measure sets in the plane so that the arithmetic sum with $\Gamma$ has empty interior.
\\

\begin{theorem}\label{g92}
Let $\Gamma$ be an arbitrary polygon in the plane.  Then there exists a full measure set, $A$, so that $$\left(  A + \Gamma\right)^{\circ} = \emptyset.$$
\end{theorem}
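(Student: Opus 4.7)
The plan is to exploit the fact that a polygon in $\mathbb{R}^2$, being a finite union of line segments, has two-dimensional Lebesgue measure zero. This gives us a lot of room to remove a translation-dual of a countable dense set while still keeping $A$ of full measure.

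Concretely, I would let $D\subset \mathbb{R}^2$ be any countable dense set, say $D=\mathbb{Q}^2$, and define
\begin{equation*}
  E \;:=\; \bigcup_{q\in D}(q-\Gamma), \qquad A \;:=\; \mathbb{R}^2\setminus E.
\end{equation*}
Since $\Gamma$ is a polygon, $-\Gamma$ is again a polygon and in particular is a finite union of line segments, so $\mathcal{L}^2(-\Gamma)=0$. Each translate $q-\Gamma$ therefore has Lebesgue measure zero, and $E$ is a countable union of null sets, hence $\mathcal{L}^2(E)=0$. Consequently $A$ has full Lebesgue measure in $\mathbb{R}^2$.

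It remains to verify $(A+\Gamma)^{\circ}=\emptyset$. For any $q\in D$, the statement $q\in A+\Gamma$ is equivalent to $A\cap(q-\Gamma)\neq\emptyset$; but $q-\Gamma\subset E$ by construction and $A\cap E=\emptyset$, so $q\notin A+\Gamma$. Thus $A+\Gamma\subset \mathbb{R}^2\setminus D$. Since $D$ is dense, $\mathbb{R}^2\setminus D$ has empty interior, and therefore $(A+\Gamma)^{\circ}=\emptyset$.

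I do not anticipate a real obstacle: the only ingredient is that the polygon $\Gamma$ is a Lebesgue-null set, together with the elementary observation that a set disjoint from a dense set cannot contain an open set. This is also why the statement does not extend to curves with curvature, where the corresponding set $E$ typically contains entire open neighborhoods of points of $D$ and the full-measure complement trick fails.
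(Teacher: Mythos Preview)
Your argument is correct and is in fact more direct than the paper's own proof of this theorem. What you have written is exactly the ``Giant'' construction the paper uses earlier (Section~\ref{Giant}) for the circle: set $E=\bigcup_{q\in D}(q-\Gamma)$ and take $A=E^c$. The only property of $\Gamma$ you use is $\mathcal{L}^2(\Gamma)=0$, so the proof goes through for any Lebesgue-null set in place of the polygon.

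The paper takes a different route: it decomposes $\Gamma$ into its sides, and for each side direction $\alpha_i$ builds a dense $G_\delta$, $\mathcal{L}^2$-null family $P_i(G)$ of parallel lines (coming from a dense $G_\delta$ null set $G\subset\mathbb{R}$). Setting $A=\bigcap_i P_i(G)^c$, one gets $A+\widetilde{\ell}_i\subset \widetilde{P}_i(G)^c$ for each $i$, hence $(A+\Gamma)^c$ contains the finite intersection $\bigcap_i \widetilde{P}_i(G)$, which is dense by the Baire category theorem. Your approach avoids the side-by-side decomposition and the appeal to Baire entirely; the paper's construction, on the other hand, keeps track of the line structure of each side and thereby dovetails with the machinery used for the subsequent rotated-square result (Theorem~\ref{g71}), where one needs a single $A$ that works simultaneously for uncountably many polygons.
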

The proof is given in Section \ref{p99}.

\subsubsection{The case when $\Gamma$ is a rotated square }
Let  $\Gamma=N_\theta$ denote the angle-$\theta$ rotated copy  (around the origin in anti-clockwise direction) of the square which is the perimeter of $[-1,1]^2$.

 \begin{theorem}\label{g71}
 There exists a set $A$ in the plane of positive measure so that
 for all $\theta\in[0,\pi)$ \textbf{simultaneously} we have
 \begin{equation}\label{g70}
   (A+N_\theta)^{\circ}=\emptyset.
 \end{equation}
 \end{theorem}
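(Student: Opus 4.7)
The plan is to reduce the statement to a condition on projections of $A$ and then construct $A$ by a Cantor-like nested refinement. For each $\theta \in [0, \pi)$, the polygon $N_\theta$ decomposes as $N_\theta = \bigcup_{i=1}^{4} \sigma_i(\theta)$ where $\sigma_1(\theta), \sigma_3(\theta)$ are parallel to $e_\theta := (\cos\theta, \sin\theta)$ and $\sigma_2(\theta), \sigma_4(\theta)$ are parallel to $e_{\theta + \pi/2}$. Consequently $A + N_\theta = \bigcup_{i=1}^{4} (A + \sigma_i(\theta))$. For a segment $\sigma$ in direction $\phi$, the orthogonal projection $\pi_{\phi^\perp}$ collapses $\sigma$ to a point, so $\pi_{\phi^\perp}(A + \sigma) = \pi_{\phi^\perp}(A)$. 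If $\pi_{\phi^\perp}(A)$ has empty interior in $\mathbb{R}$, no open box can be contained in $A + \sigma$ (its projection would have to be an open subinterval of $\pi_{\phi^\perp}(A)$), so $(A + \sigma)^{\circ} = \emptyset$. It therefore suffices to construct a set $A$ of positive Lebesgue measure such that $\pi_\phi(A)$ has empty interior in $\mathbb{R}$ for every $\phi \in [0, \pi)$; then $A + N_\theta$ is a finite union of nowhere-dense sets and has empty interior for every $\theta$.

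The construction proceeds by nested refinement. Fix a countable dense sequence $\{\phi_n\}_{n \geq 1}$ in $[0, \pi)$ and set $A_0 := [0, 1]^2$. Inductively, given the compact set $A_{n-1}$, remove from it a finite union of thin open slabs to obtain $A_n$ so that (a) $\pi_{\phi_n}(A_n)$ is totally disconnected in $\mathbb{R}$, and (b) the measure loss satisfies $\mathcal{L}^2(A_{n-1}) - \mathcal{L}^2(A_n) \leq 2^{-n-1}$. The slabs are chosen with orientations, lengths, and spacings calibrated so that, in addition to destroying the interior of $\pi_{\phi_n}(A_n)$, the empty-interior property is inherited by $\pi_\phi(A_n)$ for every $\phi$ in a small angular neighborhood $U_n$ of $\phi_n$. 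Setting $A := \bigcap_n A_n$ produces a compact set with
\[
\mathcal{L}^2(A) \;\geq\; 1 - \sum_{n=1}^\infty 2^{-n-1} \;=\; \tfrac{1}{2} \;>\; 0,
\]
and arranging $\{U_n\}$ as a cover of $[0, \pi)$ (possible by density of $\{\phi_n\}$ and controlled shrinking of the $U_n$) ensures that $\pi_\phi(A) \subseteq \pi_\phi(A_n)$ has empty interior for every $\phi \in [0, \pi)$.

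The main obstacle lies in making the slab construction robust in the angular variable, so that the empty-interior property for projections passes from the countable dense set $\{\phi_n\}$ to all of $[0, \pi)$. A single slab perpendicular to $\phi_n$ has no effect on $\pi_\phi(A_n)$ for $\phi \neq \phi_n$, since a line perpendicular to such $\phi$ intersects the slab only along a short sub-segment and still meets $A_{n-1}$ outside of it. To overcome this, the construction must be multi-scale: the slabs at stage $n$ must be short in the $\phi_n$-direction so that a tight cluster of nearby slabs can simultaneously cut off all slightly slanted lines, yet their total width must remain small enough to preserve the summable measure budget. Calibrating these parameters---slab widths, lengths, spacings, and the shrinkage rate of the angular neighborhoods $U_n$ that cover $[0, \pi)$---against the summability condition $\sum_n 2^{-n-1} < 1$ is the technical heart of the argument.
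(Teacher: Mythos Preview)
Your reduction is correct and matches the paper's: showing $(A+N_\theta)^\circ=\emptyset$ for every $\theta$ amounts to producing a compact $A$ of positive measure whose orthogonal projection in \emph{every} direction has empty interior. Your observation that compactness of $A$ makes each $A+\sigma_i(\theta)$ closed, hence nowhere dense, is what lets you pass to the finite union; the paper instead confines $A$ to the open unit disc so that the two relevant translates of $\mathrm{proj}_\theta(A)$ are disjoint, and argues from there.

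The gap is in the construction. First a minor point: removing a \emph{finite} union of open slabs from $A_{n-1}$ cannot make $\pi_{\phi_n}(A_n)$ totally disconnected while keeping positive measure, since the projection is then a finite union of nondegenerate closed intervals. More seriously, your last paragraph correctly names the real obstacle but does not resolve it. The condition ``$\pi_\phi(A)$ has empty interior for every $\phi$'' says that $[0,1]^2\setminus A$ contains, in every direction, a dense family of line segments spanning the square. Since $A$ is to have positive measure, this complement has measure strictly less than $1$ yet must contain a full-width segment in every direction: this is exactly the Besicovitch (Kakeya) phenomenon. A single slab perpendicular to $\phi_n$, as you note, does nothing for $\phi\ne\phi_n$; but neither does any collection of ``short slabs in tight clusters'' of small total area, unless it is organized with the delicacy of an actual Kakeya construction. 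Your calibration paragraph asserts that such parameters exist without indicating why, and in fact any successful version of your nested refinement would have to reproduce a Besicovitch-set construction inside it.

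The paper avoids all of this by invoking Besicovitch's theorem as a black box: take a measure-zero Borel set $\widetilde B$ containing a line in every direction, let $B=\bigcup_{r\in\mathbb{Q}^2}(r+\widetilde B)$, and set $A=D\cap B^c$ with $D$ the open unit disc. Then $A$ has full measure in $D$, and by construction a dense family of lines in every direction misses $A$, so every projection of $A$ has empty interior. That single citation is precisely the ``technical heart'' you left unspecified.
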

The proof is given in Section \ref{p99} and utilizes the existence of Besicovitch sets.  Besicovitch proved (see \cite[Theorem 11.1]{Mat15}) that there exists a compact set $\widetilde{B}\subset \mathbb{R}^2$ such that
$\widetilde{B}$ contains a line  in every direction but $\mathcal{L}^2(\widetilde{B})=0$.
\\

%
%

\section{History and proof for Proposition \ref{T50} }\label{P50}
\subsection{History  }

Proposition \ref{T50} is a strengthening of the
Erd\H{o}s and Oxtoby Theorem \cite{EO54} which is an extension of a classic theorem of Steinhaus \cite{St20, S72} on the interior of difference sets.

\begin{theorem}[Steinhaus]
If $A$ has positive Lebesgue measure, then the difference set $$A-A = \{x-y: x,y \in A\}$$ contains an open neighborhood of the origin.
\end{theorem}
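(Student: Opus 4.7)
The plan is to derive Steinhaus's theorem from the Lebesgue density theorem. First I would reduce to the case of a bounded set of finite positive measure by intersecting $A$ with a large enough interval $[-N,N]$; since the difference set of the intersection is a subset of $A-A$, it suffices to prove the conclusion in this case.

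Next, invoke the Lebesgue density theorem to pick a point $x_0 \in A$ of density one. Fix a radius $r > 0$ small enough that
\[
\frac{\mathcal{L}^1\bigl(A \cap (x_0 - r, x_0 + r)\bigr)}{2r} > \tfrac{3}{4},
\]
and write $A' := A \cap (x_0 - r, x_0 + r)$, so that $\mathcal{L}^1(A') > \tfrac{3r}{2}$. The key step is a pigeonhole in measure: for every $h$ with $|h| < r/2$, both $A'$ and its translate $A' + h$ are contained in the interval $(x_0 - 3r/2, x_0 + 3r/2)$, which has length $3r$, while
\[
\mathcal{L}^1(A') + \mathcal{L}^1(A' + h) = 2 \mathcal{L}^1(A') > 3r.
\]
Hence $A' \cap (A' + h)$ cannot be empty, and any point in the intersection produces $a, b \in A$ with $a - b = h$. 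This gives $(-r/2, r/2) \subset A - A$, which is the desired open neighborhood of the origin.

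There is essentially no obstacle here beyond choosing the density threshold and the window size consistently; the argument is a one-line consequence of the density theorem once the inequalities are aligned. As a backup or alternative route I would use a convolution proof: for bounded $A$ of positive measure, the function $f(x) := (\chi_A * \chi_{-A})(x)$ is continuous on $\mathbb{R}$, and satisfies $f(0) = \mathcal{L}^1(A) > 0$. By continuity, $f > 0$ on an open neighborhood $U$ of the origin, and positivity of $f(x)$ is equivalent to the nonemptiness of $A \cap (x + A)$, which in turn is equivalent to $x \in A - A$. Either route gives the conclusion, and both extend verbatim (replacing intervals by balls) to $\mathbb{R}^d$, which is the form most natural for the planar setting of the rest of the paper.
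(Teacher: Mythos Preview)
Your proof is correct and follows exactly the approach the paper indicates: the paper does not spell out a proof of Steinhaus's theorem but simply remarks that it ``is an easy consequence of the Lebesgue density theorem,'' and your density-point/pigeonhole argument is precisely the standard way to make that remark rigorous. Your convolution alternative is also valid, though not mentioned in the paper.
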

More generally, if $A,B\subset \mathbb{R}$ measurable sets of positive Lebesgue measure then their algebraic sum $A+B$ contains an interval.
This theorem is an easy consequence of the Lebesgue density theorem.\\

Erd\H{o}s and Oxtoby \cite{EO54} provide a significant extension of Steinhaus' result.
\begin{theorem}[Erd\H{o}s, Oxtoby]
Let $A \subset \mathbb{R}$ and $B\subset \mathbb{R}$ each with positive Lebesgue measure.
If $H:\mathbb{R}^{2}\rightarrow \mathbb{R}^{}$ is a $\mathcal{C}^1$ function on an open set $U$, with $m((A\times B) \cap U)>0$ so that the partial derivatives of $H$ are non-vanishing a.e. on $U$, then
 the interior of the set $$\{H(x,y): x \in A, y \in B\}$$ is non-empty.
\end{theorem}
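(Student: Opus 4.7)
The cleanest plan is to observe that the Erd\H{o}s--Oxtoby theorem is the special case of Proposition \ref{T50}(i) in which $H$ has no $\alpha$-dependence, so it is immediate once Proposition \ref{T50} is proved. For a direct proof, I would follow the Steinhaus strategy. Choose a base point $(x_0,y_0)\in U\cap(A\times B)$ such that $x_0$ is a one-dimensional Lebesgue density point of $A$, $y_0$ is a one-dimensional density point of $B$, and both $\partial_x H$ and $\partial_y H$ are non-zero at $(x_0,y_0)$. Such a point exists because the sets of density points $A^*\subseteq A$ and $B^*\subseteq B$ have full one-dimensional measure, so $A^*\times B^*$ has full planar measure in $A\times B$; intersecting with $U$ and then removing the planar null set on which some partial of $H$ vanishes still leaves a set of positive planar measure, and any of its points works.

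Set $\alpha_0:=H(x_0,y_0)$. By the implicit function theorem applied at $(x_0,y_0)$ (using $\partial_x H\ne 0$), there exist open intervals $J_x\ni x_0$, $J_y\ni y_0$, $J_\alpha\ni\alpha_0$ and a $\mathcal{C}^1$ function $\phi:J_y\times J_\alpha\to J_x$ with $H(\phi(y,\alpha),y)=\alpha$. After shrinking the domain, $\partial_y\phi=-\partial_y H/\partial_x H$ is bounded above in absolute value and bounded away from zero (using the non-vanishing of $\partial_y H$), and $\partial_\alpha\phi=1/\partial_x H$ is bounded, so each slice $\phi_\alpha:=\phi(\cdot,\alpha)$ is a bi-Lipschitz diffeomorphism with constants independent of $\alpha$. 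Defining $y^*(\alpha)\in J_y$ by $\phi(y^*(\alpha),\alpha)=x_0$, the map $y^*$ is continuous with $y^*(\alpha_0)=y_0$, and the assertion $\alpha\in H(A,B)$ reduces to $B\cap\phi_\alpha^{-1}(A)\ne\emptyset$.

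The conclusion then follows from a Steinhaus density argument. Since $\phi_\alpha$ is uniformly bi-Lipschitz and $x_0$ is a density point of $A$, the density of $\phi_\alpha^{-1}(A)$ at $y^*(\alpha)$ tends to $1$ uniformly in $\alpha$ near $\alpha_0$; and since $y^*(\alpha)\to y_0$ and $y_0$ is a density point of $B$, the density of $B$ at $y^*(\alpha)$ also tends to $1$. Taking $\eta<\tfrac{1}{2}$, we can find $r>0$ and a neighbourhood of $\alpha_0$ so that both $\phi_\alpha^{-1}(A)$ and $B$ occupy more than $(1-\eta)\cdot 2r$ of the interval $(y^*(\alpha)-r,y^*(\alpha)+r)$, forcing their intersection to have positive measure and hence to be non-empty. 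Every such $\alpha$ therefore belongs to $H(A,B)$, which exhibits an open neighbourhood of $\alpha_0$ inside $H(A,B)$.

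The main technical obstacle is the uniformity of these density estimates in $\alpha$: one needs the bi-Lipschitz constants of $\phi_\alpha$ to be bounded uniformly on a common interval (which follows from $\mathcal{C}^1$-continuity on a compact neighbourhood), and the density statement for $B$, originally about intervals centred at $y_0$, must transfer to intervals centred at the moving point $y^*(\alpha)$ (which follows from continuity of $y^*$ and standard properties of density points). Both are routine once constants are chosen in the right order, but they require careful bookkeeping to make the simultaneous "$(1-\eta)\cdot 2r$ from both sides" estimate work.
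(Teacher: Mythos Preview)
Your proposal is correct and matches the paper's treatment: the paper obtains Erd\H{o}s--Oxtoby as the $\alpha$-independent special case of Proposition~\ref{T50}(i), exactly as you note in your first sentence, and the direct argument you sketch (pick density points, apply the implicit function theorem to get a uniformly bi-Lipschitz family $\phi_\alpha$, then run a Steinhaus-type overlap estimate) is precisely the content of the paper's proof of Proposition~\ref{T50}(i) in Section~\ref{proppart1}, with your $\phi_\alpha$ playing the role of the paper's $g_{c,\alpha}^{-1}$. The only cosmetic difference is that the paper solves for $y$ in terms of $x$ while you solve for $x$ in terms of $y$, and the paper organizes the bookkeeping of the moving-centre density estimate slightly differently (via Lemma~\ref{N74} and the choice of $\delta'$ in \eqref{N63}), but the substance is identical.
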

The topological analogue of Steinhaus Theorem was proved by Piccard
\cite{P39}.
\begin{theorem}[Piccard]
Let $A,B\subset \mathbb{R}$ be Baire sets of second category. Then $A+B$ contains an interval.
\end{theorem}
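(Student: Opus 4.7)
The plan is to prove Piccard's theorem by localizing both $A$ and $B$ to open intervals on which they are comeager, and then applying the Baire category theorem to a translated intersection. First I would use the definition of the Baire property: for $A$, there exist an open set $G_A$ and a meager set $M_A$ such that both $A \setminus G_A$ and $G_A \setminus A$ are contained in $M_A$. Since $A$ is of second category, $G_A$ itself must be non-meager, hence non-empty, so it contains a non-empty open interval $U$ on which $U \setminus A$ is meager. The same argument produces an open interval $V$ with $V \setminus B$ meager.

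Next I would show that every $t$ in the open interval $I := (\inf U + \inf V,\, \sup U + \sup V)$ lies in $A + B$. Fix such a $t$: the open interval $t - V$ intersects $U$ in a non-empty open subinterval $W$. On $W$ the set $W \setminus A$ is meager, as a subset of $U \setminus A$. Moreover $W \setminus (t - B) = t - \bigl( (t - W) \setminus B \bigr) \subset t - (V \setminus B)$, and meagerness is preserved by the homeomorphism $v \mapsto t - v$, so $W \setminus (t - B)$ is meager as well. Consequently $W \setminus \bigl( A \cap (t - B) \bigr)$ is a union of two meager sets and is itself meager.

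Now the Baire category theorem applied to the non-empty open interval $W$ guarantees that $W$ is not meager in itself, so $A \cap (t - B) \cap W$ is non-empty. Any point $a$ in this intersection satisfies $a \in A$ and $t - a \in B$, whence $t = a + (t - a) \in A + B$. This proves $I \subset A + B$, and $I$ is a non-empty open interval, completing the proof.

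The main technical obstacle is the initial localization step: one must extract, purely from the hypotheses that $A$ has the Baire property and is of second category, an honest open interval on which $A$ is comeager (rather than merely topologically dense). Once this is in place, the rest is a clean Baire category intersection argument inside a translated window $W$, entirely parallel to the Lebesgue measure-theoretic proof of Steinhaus' theorem, with ``comeager in an open interval'' playing the role of ``density points of positive measure''.
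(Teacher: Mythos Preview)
Your argument is correct and is essentially the classical proof of Piccard's theorem. Note, however, that the paper does not give its own proof of this statement: Piccard's theorem appears in the historical overview with a citation to \cite{P39} and is not proved there. The paper's own contribution in this direction is part~(iii) of Proposition~\ref{T50}, a generalization to $\mathcal{C}^2$ functions $H(\alpha,x,y)$ with non-vanishing partials, producing a single interval inside $\bigcap_{\alpha\in I}H(\alpha,A,B)$.

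Comparing your proof to the paper's proof of that generalization: you localize directly via the Baire property to open intervals $U,V$ on which $A,B$ are comeager, and then run a Baire-category intersection in the translated window $W=U\cap(t-V)$. The paper instead works inside the implicit-function apparatus of Section~\ref{N65}, uses the set $R_A$ of points where $A$ is locally of second category (Fact~\ref{a98}), and shows that the set $Z$ of parameters $(c,\alpha)$ for which $A\cap g_{c,\alpha}^{-1}(B)$ is of second category is open and contains $(c_0,\alpha_0)$. That machinery is heavier than needed for Piccard's theorem itself, but it is what delivers the uniformity in $\alpha$ that the paper requires; for the theorem as stated, your direct argument is the appropriate and complete one.
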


A nice review of the field and  generalizations of the results above are available in \cite{J05}.
\vskip.5in

\subsection{Preliminaries for the Proof of Proposition \ref{T50}}\label{mainlemma}

In Sections \ref{proppart1}, \ref{a17}, \ref{N42Baire} and \ref{N42} we use the notation and Lemmas below.

\subsubsection{Introduction of the function $g_{c,\alpha}$}\label{N65}

   Let $J_1,J_2$ be compact intervals on $\mathbb{R}$
and let
   $H(\alpha,x,y)\in \mathcal{C}^2(\Lambda\times J_1\times J_2)$, where $\Lambda$ is a parameter interval. Moreover, we assume that the $H_x(\alpha,x,y)$ and $H_y(\alpha,x,y)$ are not vanishing on $\Lambda\times J_1\times J_2$. We are also given the points $u_1\in J_1$ and $u_2\in J_2$.
  The purpose of this Section \ref{N65} is to construct by the Implicit Function Theorem a function $g_{c,\alpha}$ (see \eqref{N46}) which sends a neighborhood of $u_1$ to a neighborhood of $u_2$. Moreover, $g_{c,\alpha}$ satisfies
  $$
  H(\alpha,x,g_{c,\alpha}(x))= c,
  $$
   where $\alpha\in \Lambda$ and $c$ is from a neighborhood of $H(\alpha,u_1,u_2)$.

   Without loss of generality we may assume that
   \begin{equation}\label{N76}
     \frac{H_x(\alpha,x,y)}{H_y(\alpha,x,y)}<0,\quad \forall (\alpha,x,y)\in \Lambda\times J_1\times J_2.
   \end{equation}
   For definiteness, we choose $\alpha_0$ as the center of the interval $\Lambda$. Set $c_0:=H(\alpha_0,u_1,u_2)$, and choose a small  $\delta_0>0$ such that
  $$
  \left[\alpha_0-\delta_0,\alpha_0+\delta_0\right]
  \times
  \left[u_1-\delta_0,u_1+\delta_0\right]
  \times
   \left[u_2-\delta_0,u_2+\delta_0\right]\subset \left(\Lambda\times J_1\times J_2\right)^\circ.
  $$
Set
$$
S:= [c_0-\delta_0,c_0+\delta_0]\times
\left[\alpha_0-\delta_0,\alpha_0+\delta_0\right]
  \times
  \left[u_1-\delta_0,u_1+\delta_0\right]
  \times
   \left[u_2-\delta_0,u_2+\delta_0\right].
$$
  For a $(c,\alpha,x,y)\in S $ we define
  $$
F(c,\alpha,x,y):=H(\alpha,x,y)-c.
  $$
  By assumption $F_y(c_0,\alpha_0,u_1,u_2)\ne 0$.
  Then by Implicit Function Theorem, there exists a neighborhood $M\subset S$
  of $(c_0,\alpha_0,u_1,u_2)$ where
  $F_y$ does not vanish.
  To abbreviate notation we write $\mathbf{X}_0:=(c_0,\alpha_0,u_1)$ and
  $\mathbf{X}:=(c,\alpha,x)$.
  Moreover, also from the Implicit Function Theorem, we obtain that there exists a neighborhood
  $N$ of $\mathbf{X}_0$  and a function $G\in \mathcal{C}^2(N)$ so that
    \begin{description}
    \item[(i)] $G(\mathbf{X}_0)=u_2$,
    \item[(ii)] $\left(\mathbf{X},G(\mathbf{X})\right)\in M$,
    \item[(iii)] $F(\mathbf{X},G(\mathbf{X}))=0$ if $\mathbf{X}\in N$,
    \item[(iv)]
    $
 G'(\mathbf{X})=
\left(
\frac{1}{H_y\left(\alpha,x,G(\mathbf{X})\right)},
-\frac{H_\alpha\left(\alpha,x,G(\mathbf{X})\right)}
{H_y\left(\alpha,x,G(\mathbf{X})\right)},
-\frac{H_x\left(\alpha,x,G(\mathbf{X})\right)}
{H_y\left(\alpha,x,G(\mathbf{X})\right)}
\right)
    $.
  \end{description}
  For simplicity we may assume that $N$ is of the form
  \begin{equation*}
    N=
  [c_0-\delta_1,c_0+\delta_1 ]\times
\left[\alpha_0-\delta_1,\alpha_0+\delta_1\right]
  \times
  \left[u_1-\delta_1,u_1+\delta_1\right],
  \end{equation*}
  for a $0<\delta_1<\delta_0$. For a $(c,\alpha)\in [c_0-\delta_1,c_0+\delta_1]\times
\left[\alpha_0-\delta_1,\alpha_0+\delta_1\right]$, we introduce
\begin{equation}\label{N46}
  g_{c,\alpha}(x):=G(c,\alpha,x).
\end{equation}

Then by $(\textbf{iv})$ above, we have
\begin{equation*}\label{N81}
  g'_{c,\alpha}(x)
  =
 -\frac{H_x\left(\alpha,x,G(c,\alpha,x)\right)}
{H_y\left(\alpha,x,G(c,\alpha,x)\right)}.
\end{equation*}
 Recall that by \eqref{N76}, $g'_{c,\alpha}( \cdot )$ is always positive.
  By assumption we can choose an $\eta>0$ such that  for all $(c,\alpha,x)\in N$:
  \begin{equation}\label{N80}
  \eta<|H_x\left(\alpha,x,G(c,\alpha,x)\right)|
 < \frac{1}{\eta},  \mbox{   and   }
  \eta<
|H_y\left(\alpha,x,G(c,\alpha,x)\right)|
 < \frac{1}{\eta}.
  \end{equation}
Then we have
\begin{equation}\label{N79}
  \eta^2 \leq g'_{c,\alpha}(x) \leq \frac{1}{\eta^2}\mbox{ and }
  |g''_{c,\alpha}(x)| \leq \frac{4}{\eta^5}.
\end{equation}
 \begin{lemma}\label{N74}
    For all $ z\in[u_1-\delta_1,u_1+\delta_1]$ we have
    \begin{equation*}\label{N73}
      |g_{c,\alpha}(z)-g_{c_0,\alpha_0}(z)| \leq \frac{2}{\eta^2} \cdot \|(c,\alpha)-(c_0,\alpha_0)\|_{\max}.
    \end{equation*}
  \end{lemma}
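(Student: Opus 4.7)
The plan is to view $g_{c,\alpha}(z) - g_{c_0,\alpha_0}(z)$ as the increment of the $C^2$ function $(c,\alpha) \mapsto G(c,\alpha,z)$ on the rectangle $[c_0-\delta_1,c_0+\delta_1]\times[\alpha_0-\delta_1,\alpha_0+\delta_1]$ with $z \in [u_1-\delta_1, u_1+\delta_1]$ held fixed, and then apply the mean value theorem component by component.

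First I would insert an intermediate point and split
\begin{equation*}
g_{c,\alpha}(z) - g_{c_0,\alpha_0}(z) = \bigl[G(c,\alpha,z) - G(c_0,\alpha,z)\bigr] + \bigl[G(c_0,\alpha,z) - G(c_0,\alpha_0,z)\bigr],
\end{equation*}
so that each bracket is a one-variable increment. The one-dimensional mean value theorem then bounds the first bracket by $\sup_N |G_c| \cdot |c-c_0|$ and the second by $\sup_N |G_\alpha| \cdot |\alpha - \alpha_0|$. From part (iv) of the Implicit Function Theorem setup, these partials are given explicitly by
\begin{equation*}
G_c = \frac{1}{H_y}, \qquad G_\alpha = -\frac{H_\alpha}{H_y}.
\end{equation*}

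Next I would bound these suprema using \eqref{N80}. The bound $|H_y| > \eta$ on $N$ immediately yields $|G_c| < 1/\eta \le 1/\eta^2$ (shrinking $\eta$ if necessary so that $\eta \le 1$). For $G_\alpha$ I use that $H \in \mathcal{C}^2(\Lambda \times J_1 \times J_2)$ and $N$ is compact, so $|H_\alpha|$ is bounded on $N$; by choosing $\eta$ small enough from the outset we may assume $|H_\alpha| < 1/\eta$ throughout $N$ as well, giving $|G_\alpha| < 1/\eta^2$.

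Combining the two increments,
\begin{equation*}
|g_{c,\alpha}(z) - g_{c_0,\alpha_0}(z)| \le \frac{1}{\eta^2}|c-c_0| + \frac{1}{\eta^2}|\alpha - \alpha_0| \le \frac{2}{\eta^2}\|(c,\alpha)-(c_0,\alpha_0)\|_{\max},
\end{equation*}
which is the claimed inequality, uniformly in $z \in [u_1-\delta_1,u_1+\delta_1]$. There is no genuine obstacle here: the only point requiring a little care is noting that the constant $\eta$ from \eqref{N80} can be chosen to also dominate $|H_\alpha|$ on the compact neighborhood $N$, since \eqref{N80} as stated only controls $H_x$ and $H_y$.
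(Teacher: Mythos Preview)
Your proof is correct and follows essentially the same route as the paper: the same telescoping split $G(c,\alpha,z)-G(c_0,\alpha,z)+G(c_0,\alpha,z)-G(c_0,\alpha_0,z)$, the same one-variable mean value theorem on each piece, and the same use of the explicit formulas for $G_c$ and $G_\alpha$ from item \textbf{(iv)}. Your observation that \eqref{N80} as written only controls $|H_x|$ and $|H_y|$, so that an analogous bound $|H_\alpha|<1/\eta$ must be assumed (by shrinking $\eta$ on the compact set $N$) to get $|G_\alpha|<1/\eta^2$, is a point the paper leaves implicit but which you have correctly identified and addressed.
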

  \begin{proof}[Proof of the Lemma]
    \begin{multline*}
 |g_{c,\alpha}(z)-g_{c_0,\alpha_0}(z)|
 =
 |G(c,\alpha,z)-G(c_0,\alpha_0,z)| \\
  \leq |G(c,\alpha,z)-G(c_0,\alpha,z)|
  +
  |G(c_0,\alpha,z)-G(c_0,\alpha_0,z)|\\
  \leq
 |G_c(\widehat{c},\alpha,z)||c-c_0|+
 |G_\alpha(c_0,\widehat{\alpha},z)| \cdot |\alpha-\alpha_0|\\
 \leq\frac{2}{\eta^2}\|(c,\alpha)-(c_0,\alpha_0)\|_{\max},
    \end{multline*}
    where $\widehat{c}$ and $\widehat{\alpha}$ are chosen using the mean value theorem.
  \end{proof}

\begin{lemma}\label{N68}
  Assume that there exists a $\widehat{\varepsilon}\in (0,\delta_1)$ and $K_i\subset J_i$, $i=1,2$ such that
\begin{equation}\label{N70}
  g_{c,\alpha}(K_1)\cap K_2\ne\emptyset\quad  \mbox{if  }\quad
  \|(c,\alpha)-(c_0,\alpha_0)\|_{\max}<\widehat{\varepsilon}.
\end{equation}
Let $J:=(c_0-\widehat{\varepsilon} ,\,\,c_0+\widehat{\varepsilon}\,)$.  Then
\begin{equation}\label{N67}
  J\subset \cap_{\alpha\in(\alpha_0-\widehat{\varepsilon},\,\,\alpha_0+\widehat{\varepsilon})}H(\alpha,K_1,K_2).
\end{equation}
\end{lemma}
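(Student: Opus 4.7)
The plan is to unpack the definition of $g_{c,\alpha}$ given by the Implicit Function Theorem and directly trace an element $c \in J$ through the construction. Fix an arbitrary $c \in J = (c_0 - \widehat{\varepsilon}, c_0 + \widehat{\varepsilon})$ and an arbitrary $\alpha \in (\alpha_0 - \widehat{\varepsilon}, \alpha_0 + \widehat{\varepsilon})$. Then $\|(c,\alpha) - (c_0,\alpha_0)\|_{\max} < \widehat{\varepsilon}$, so the standing hypothesis \eqref{N70} applies and yields some $x \in K_1$ with $g_{c,\alpha}(x) \in K_2$.

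Next, I would invoke the defining property of $g_{c,\alpha}$. Recall that $g_{c,\alpha}(x) = G(c,\alpha,x)$, and by item \textbf{(iii)} in the construction of $G$, we have $F(\mathbf{X}, G(\mathbf{X})) = 0$, i.e.\ $H(\alpha, x, g_{c,\alpha}(x)) = c$ for every $(c,\alpha,x) \in N$. Setting $y := g_{c,\alpha}(x) \in K_2$, this gives $H(\alpha, x, y) = c$ with $x \in K_1$, $y \in K_2$, which by definition of the image set means $c \in H(\alpha, K_1, K_2)$. Since $\alpha$ was arbitrary in $(\alpha_0 - \widehat{\varepsilon}, \alpha_0 + \widehat{\varepsilon})$, we conclude $c \in \bigcap_{\alpha \in (\alpha_0 - \widehat{\varepsilon}, \alpha_0 + \widehat{\varepsilon})} H(\alpha, K_1, K_2)$, proving \eqref{N67}.

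In other words, the lemma is essentially a repackaging of the fact that $g_{c,\alpha}$ was designed so that its graph lies in the level set $\{H(\alpha, \cdot, \cdot) = c\}$: any point where this graph meets the rectangle $K_1 \times K_2$ is automatically a witness that $c$ belongs to $H(\alpha, K_1, K_2)$. There is no real obstacle here since the hard analytic work has already been done in Section \ref{N65} (application of the Implicit Function Theorem, the sign condition \eqref{N76}, and the derivative bounds \eqref{N79}); the proof of Lemma \ref{N68} is purely a bookkeeping argument that converts the geometric intersection hypothesis into membership in the desired intersection of images.
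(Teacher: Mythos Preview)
Your proof is correct and matches the paper's own argument essentially line for line: both fix an arbitrary $(c,\alpha)$ in the prescribed rectangle, invoke the hypothesis \eqref{N70} to obtain $k_1\in K_1$ with $g_{c,\alpha}(k_1)\in K_2$, and then use property \textbf{(iii)} of $G$ to conclude $c=H(\alpha,k_1,g_{c,\alpha}(k_1))\in H(\alpha,K_1,K_2)$. Your closing paragraph accurately describes the role of the lemma as a bookkeeping step.
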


\begin{proof}
 Choose an arbitrary
$(c,\alpha)\in J\times (\alpha_0-\widehat{\varepsilon},\alpha_0+\widehat{\varepsilon})$. We claim that
\begin{equation*}\label{N69}
  c\in \left\{H(\alpha,x,y):x\in K_1,y\in K_2\right\}.
\end{equation*}
By assumption, there exists $k_1\in K_1$ and $k_2\in K_2$ such that
$G(c,\alpha,k_1)=k_2$. Then by \textbf{(iii)}
$$
c=H(\alpha,k_1,G(c,\alpha,k_1))=H(\alpha,k_1,k_2).
$$
This completes the proof of the Lemma.
\end{proof}

\subsection{Proof of Proposition  \ref{T50} (i)}\label{proppart1}

\begin{proof}
We use the notation of Section \ref{N65}. Let $u_1$ and $u_2$ be Lebesgue density points of $A$ and $B$ respectively.
 Let $\epsilon_0>0 $ be arbitrarily small.
Choose $0<\widehat{\delta}<\delta_1$ such that
  for $\widetilde{J}_1:=[u_1-\widehat{\delta},u_1+\widehat{\delta}]$ and $\widetilde{A}:=\widetilde{J}_1\cap A$
  we have
  \begin{equation*}\label{N60}
    \frac{|\widetilde{A}|}{|\widetilde{J}_1|}>1-\epsilon_0,
  \end{equation*}
where $|A|:=\mathcal{L}(A)$.
Let $\delta'>0$ small enough so that for $\widetilde{J}_2:=[u_2-\delta',u_2+\delta']$ and $\widetilde{B}:=\widetilde{J}_2\cap B$, we have
  \begin{equation}\label{N62}
    \frac{|\widetilde{B}|}{|\widetilde{J}_2|}>1-\epsilon_0.
  \end{equation}

Moreover, we require that $\widehat{\delta}$ is small enough so that we can choose an $\widetilde{\varepsilon}$ such that
  \begin{equation}\label{N64}
    \|(c,\alpha)-(c_0,\alpha_0)\|_{\max}<\widetilde{\varepsilon}
  \end{equation}
  implies by Lemma \ref{N74} that \eqref{N62} holds with the following choice of $\delta'$:
 letting $\eta>0$ as in \eqref{N80}, we choose
    \begin{equation}\label{N63}
    \delta' = \widehat{\delta} \cdot \frac{1}{\eta^2}+\|g_{c,\alpha}-g_{c_0,\alpha_0}\|_{\max},
  \end{equation}
  where
    $$
  \|g_{c,\alpha}-g_{c_0,\alpha_0}\|_{\max}:=
  \max\left\{|g_{c,\alpha}(u)-g_{c_0,\alpha_0}(u)|:u\in[u_1-\delta_1,u_1+\delta_1]\right\}.
  $$
%
\\

   Further we also require that $\widetilde{\varepsilon}>0$ is small enough that \eqref{N64} implies that
  \begin{equation}\label{N61}
    \|g_{c,\alpha}-g_{c_0,\alpha_0}\|_{\max}<\frac{1}{2}\eta^2\widehat{\delta}.
  \end{equation}

The purpose of \eqref{N61} is to ensure that
\begin{equation}\label{N59}
    u_2 \in \left ( g_{c,\alpha}(\widetilde{J_1})\right)^{\circ}.
\end{equation}
Namely,
 \eqref{N61} implies that
 \begin{equation}\label{z99}
   \left|
   g_{c,\alpha}(u_1)- g_{c_0,\alpha_0}(u_1)
   \right|
   =
    \left|
g_{c,\alpha}(u_1)- u_2
   \right|
   <\frac{1}{2}\eta^2\widehat{\delta}.
 \end{equation}
On the other hand,
\begin{equation}\label{z98}
   \left|
    g_{c,\alpha}(u_1\pm \widehat{\delta})- g_{c,\alpha}(u_1)
   \right|>\widehat{\delta} \cdot \eta^2.
\end{equation}
That is $ g_{c,\alpha}(J_1)$ contains the $\widehat{\delta} \cdot \eta^2$-neighborhood of $g_{c,\alpha}(u_1)$  and $_2$ is contained in this neighborhood, which yields that \eqref{N59} holds.

Now,
    \begin{equation}\label{N58}
    \frac{|g_{c,\alpha}(\widetilde{A})|}{|g_{c,\alpha}(\widetilde{J}_1)|}
    =
1- \frac{|g_{c,\alpha }(   \widetilde{J_1}   \backslash \widetilde{A})|}{|g_{c,\alpha}(   \widetilde{J_1}   )|} \geq 1- \frac{\frac{1}{\eta^2}\epsilon_0 |\widetilde{J_1}|}{  \eta^2 |\widetilde{J_1}|} \geq 1-\frac{\epsilon_0}{\eta^4}.
  \end{equation}

 Next, we obtain a lower bound on $ \frac{|\widetilde{B}\cap g_{c,\alpha}(\widetilde{J}_1)|}{|g_{c,\alpha}(\widetilde{J}_1)|}$ using the assumption that $u_2$ is a density point.
  Fix $(c,\alpha)$ and let $g_{c,\alpha}(\widetilde{J}_1) = (u_2- \delta_2, u_2 + \delta_2')$.  Combining \eqref{N79}, \eqref{N63} and \eqref{N61} by the Mean Value Theorem we obtain that $0<\delta_2, \delta_2' \le \delta'$.
  It follows that
  $$|\widetilde{B}^c \cap [u_2, u_2 + \delta_2')| \le 2\delta_2' \epsilon_0,$$
    $$|\widetilde{B}^c \cap (u_2 - \delta_2,  u_2 ]| \le 2\delta_2 \epsilon_0,$$
    and so
 $$| \widetilde{B}^c \cap g_{c,\alpha}(\widetilde{J_1}) |
 = | \widetilde{B}^c \cap (u_2- \delta_2, u_2 + \delta_2')| \le 2\epsilon_0  |g_{c,\alpha}(\widetilde{J_1}) |  .$$

  Putting this together, we obtain that
  \begin{equation}\label{N57}
    \frac{|\widetilde{B}\cap g_{c,\alpha}(\widetilde{J}_1)|}{|g_{c,\alpha}(\widetilde{J}_1)|}
    >
1- 2\epsilon_0.
  \end{equation}

  Combining \eqref{N58} with \eqref{N57} and
  choosing $(c,\alpha)$ sufficiently close to $(c_0,\alpha_0)$, we get
  \begin{equation*}\label{N56}
    g_{c,\alpha}(\widetilde{A})\cap \widetilde{B}\ne\emptyset.
\end{equation*}
\end{proof}


\subsection{A variant of Newhouse thickness}\label{N43}

In order to prove part (ii) of Proposition \ref{T50}, we need to introduce a modification of the well-known Newhouse thickness (see
\cite{PT00}). Namely, we have to tackle the problem that
 the Newhouse thickness of a Cantor set can drop significantly if we take a smooth image of a Cantor set.
\begin{definition}\label{N100}
  Let $K\subset\mathbb{R}$ be a Cantor set. The gaps of $K$ are the connected components of $K^c$.
  Let us denote the collection of gaps of $K$ by $\mathcal{G}:=\mathcal{G}_K$. We write $\ell (G)$ and $r(G)$ for the left and right endpoints of the gap $G$ correspondingly.
 Let $u\in K$ be an end point of a gap $G$. Without loss of generality we may assume that $u=r(G)$.
 For an $\varepsilon \geq 0$, we define the
  $\varepsilon$-bridge $B_\varepsilon(u)$  as follows:
  \begin{equation}\label{N95}
    B_\varepsilon(u):=\left(u,\ell (\widetilde{G})\right),
  \end{equation}
  where $|\widetilde{G}|  \geq (1-\varepsilon)|G|$ and, whenever $\widehat{G}\in\mathcal{G}$ with
  $\widehat{G}\subset (u,\ell (\widetilde{G}))$, then $|\widehat{G}|<(1-\varepsilon)|G|$. Now we can define the $\varepsilon$-thickness of $K$ at $u$ by
  \begin{equation*}
  \tau_\varepsilon(K,u):=\frac{|B_\varepsilon(u)|}{|G|}
  \mbox{ and }
  \tau_\varepsilon(K):=\inf\limits_{u\in U}\tau_\varepsilon(K,u),
  \end{equation*}
  where $U$ is the set of the endpoints of the gaps. Note that the case of $\varepsilon=0$ is the usual Newhouse thickness of a Cantor set (see \cite[p.61]{PT00}).
  \end{definition}
  It is straightforward that
  \begin{equation}\label{N99}
    \mbox{If }
    0 \leq \varepsilon_1 <\varepsilon_2 \mbox{ then }
    \tau_{\varepsilon_1}(K)   \geq  \tau_{\varepsilon_2}(K).
  \end{equation}
  This is so because for every gap endpoint $u\in U$ we have
  $B_{\varepsilon_1}(u) \geq B_{\varepsilon_2}(u)$.
  \begin{lemma}\label{N45}
   For every Cantor set $C\subset \mathbb{R}$ and $\epsilon \in (0,1)$ we have
   $$(1-\epsilon)^2  \le \frac{\tau_{\epsilon}(C)}{\tau(C)} \le 1.$$
   In particular,
  $$\lim_{\epsilon\rightarrow 0}\tau_{\epsilon}(C) = \tau(C).$$

  \end{lemma}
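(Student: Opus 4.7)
My plan is to handle the two inequalities separately and then read off the limit by squeezing. The upper bound $\tau_\epsilon(C)/\tau(C)\leq 1$ is essentially immediate and is precisely observation \eqref{N99} specialized to $\epsilon_1=0$, $\epsilon_2=\epsilon$: any gap of size $\geq |G|$ automatically has size $\geq (1-\epsilon)|G|$, so the $\epsilon$-stopping gap in $B_\epsilon(u)$ appears at or before the $0$-stopping gap. Hence $B_\epsilon(u)\subseteq B_0(u)$ for every gap endpoint $u$, and the inequality survives passing to the infimum.

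For the lower bound I fix a gap endpoint $u$ with associated gap $G$ and, using the left--right symmetry of the definition, reduce to the case $u=r(G)$. I then list the gaps immediately to the right of $u$ in order as $G^{(1)}, G^{(2)}, \ldots$ and let $j$ be the smallest index with $|G^{(j)}|\geq (1-\epsilon)|G|$, so that $\tilde G=G^{(j)}$ and $B_\epsilon(u)=(u,\ell(G^{(j)}))$. The proof then splits into two cases. If $|G^{(j)}|\geq |G|$ then $G^{(j)}$ is already the $0$-stopping gap at $u$, so $B_\epsilon(u)=B_0(u)$ and the bound is trivial. Otherwise $(1-\epsilon)|G|\leq |G^{(j)}|<|G|$, and the key trick I would exploit is to switch viewpoint to the point $v:=\ell(G^{(j)})$: this is itself a gap endpoint whose associated gap is $G^{(j)}$, and its ordinary bridge extends to the \emph{left}. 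Moving leftward from $v$ one first crosses $G^{(j-1)},\ldots,G^{(1)}$, each of which has length strictly less than $(1-\epsilon)|G|\leq |G^{(j)}|$ by the minimality of $j$, so none can stop the leftward bridge; the next gap encountered is $G$ itself, and since $|G|>|G^{(j)}|$ in this subcase, $G$ is the first admissible stopping gap. This forces
\[
B_0(v)=(r(G),v)=(u,\ell(G^{(j)}))=B_\epsilon(u),
\]
and hence $|B_\epsilon(u)|=|B_0(v)|\geq \tau(C)\,|G^{(j)}|\geq (1-\epsilon)\,\tau(C)\,|G|$. Dividing by $|G|$ and taking the infimum over gap endpoints yields $\tau_\epsilon(C)\geq (1-\epsilon)\,\tau(C)\geq (1-\epsilon)^2\,\tau(C)$, which is the desired lower bound; the limit statement then follows from the sandwich $(1-\epsilon)^2\,\tau(C)\leq \tau_\epsilon(C)\leq \tau(C)$ as $\epsilon\to 0$.

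The main obstacle I foresee is purely bookkeeping: keeping orientations straight so that the reduction to $u=r(G)$ (with a rightward $\epsilon$-bridge) matches up with the leftward ordinary bridge at the opposite endpoint $v=\ell(G^{(j)})$. That reinterpretation of $B_\epsilon(u)$ as an honest $0$-bridge of a different gap endpoint is the substantive step; once it is in place the size bound is immediate from the definition of $\tau(C)$. I note in passing that the argument above actually delivers the sharper bound $\tau_\epsilon(C)\geq (1-\epsilon)\,\tau(C)$, so the stated $(1-\epsilon)^2$ is slack but in any case sufficient for the continuity assertion at $\epsilon=0$.
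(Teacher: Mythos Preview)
Your argument is correct and hinges on exactly the same key trick as the paper: reinterpreting the $\epsilon$-bridge $B_\epsilon(u)$ as the ordinary bridge $B_0(v)$ based at the left endpoint $v=\ell(\widetilde G)$ of the $\epsilon$-stopping gap, and then invoking $|B_0(v)|\ge \tau(C)\,|\widetilde G|\ge(1-\epsilon)\,\tau(C)\,|G|$. The paper packages this inside a proof by contradiction together with an approximate-infimum choice $\tau_\epsilon(C,u)<\frac{1}{1-\epsilon}\,\tau_\epsilon(C)$, which is precisely where the second factor of $(1-\epsilon)$ enters; your direct argument for every gap endpoint $u$ sidesteps that and indeed delivers the sharper bound $\tau_\epsilon(C)\ge(1-\epsilon)\,\tau(C)$ you noted. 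One cosmetic point: enumerating the gaps to the right of $u$ in left-to-right order as $G^{(1)},G^{(2)},\ldots$ is not well-defined (gaps accumulate at $u$), but the only fact you actually use is that every gap contained in $B_\epsilon(u)$ has length $<(1-\epsilon)|G|$, which is immediate from the definition of the $\epsilon$-bridge.
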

  \begin{proof}
  Let $C\subset \mathbb{R}$ be a Cantor set.
In order to get a contradiction,  we assume that there exists an $\epsilon \in (0,1) $ so that
\begin{equation}\label{contra}\tau_{\epsilon}(C) < (1-\epsilon)^2 \tau(C).\end{equation}
Let $U$ be the set of the endpoints of the gaps of $C$.
By definition, we can find $u\in U$ so that
$$\tau_{\epsilon}(C) \le \tau_{\epsilon}(C,u) < \frac{1}{1-\epsilon}\cdot  \tau_{\epsilon}(C) .$$
Without loss of generality, we may assume that $u$ is the right-end-point of a gap $G$.
Define $\widehat{G}$ to be the first gap of $C$ to the right of $u$ so that
\begin{equation*}(1-\epsilon) | G| \le |\widehat{G} |. \end{equation*}
Define $\widetilde{G}$ to be the first gap of $C$ to the right of $u$ so that
\begin{equation}\label{T98}| G| \le |\widetilde{G} |. \end{equation}
Set $\widetilde{u}: = l(\widetilde{G}),$ and $\widehat{u}: = l(\widehat{G}).$  Clearly,
\begin{equation}\label{T97}B_{\epsilon}(u) = [u, \widetilde{u}] ,
  \mbox{ and }
B_0(u) = [u, \widehat{u}] , \end{equation}
It follows by assumption \eqref{contra} above that
\begin{equation}\label{T96}
B_{\epsilon}(u) \subset B_0(u)
\mbox{ and }
B_{\epsilon}(u) \neq B_0(u).
\end{equation}
As a consequence, we have
\begin{equation}\label{T95} (1-\epsilon) |G| \le |\widehat{G}| < |G| \le |\widetilde{G}|.\end{equation}
We claim that
\begin{equation}\label{T94}
B_0(\widehat{u}) = [u, \widehat{u}].
\end{equation}
On the one hand, $B_0(\widehat{u}) \subset [u, \widehat{u}]$ follows from the second inequality in \eqref{T95}.
On the other hand, if $B_0(\widehat{u}) \neq [u, \widehat{u}]$, then $\exists$ a gap $G' \subset (u, \widehat{u})$ so that
$B_0(\widehat{u})  = [r(G') , \widehat{u}]$ and $|G'| \geq |\widehat{G}| \geq (1-\epsilon) |G|.$  The existence of such a $G'$ contradicts the definition of $\widehat{G}.$

Now we have
\begin{align*}
\tau_0(C) \le \tau_0(C,u)
= \frac{|B_0(\widehat{u})  |}{    |\widehat{G}|}
\le \frac{|B_\epsilon(u)|}{(1-\epsilon) |G|}
\le  \frac{1}{(1-\epsilon)^2}\cdot \tau_{\epsilon}(C) .
\end{align*}
It follows
that $$(1-\epsilon)^2\cdot \tau_{0}(C) < \tau_{\epsilon}(C),$$ which contradicts our assumption in \eqref{contra}.
  \end{proof}

  \begin{lemma}\label{N98}Let $K\subset \mathbb{R}$ be a Cantor set, and let $I$ be an open interval such that
  $I\cap K$ is a closed nonempty set.
    Let $g\in \mathcal{C}^1(\mathbb{R})$ satisfying
    $\min\limits_{x\in I}g'(x)>0$. Moreover,  we have
    \begin{equation}\label{N97}
      1-\varepsilon \leq
      \frac{g'(z_1)}{g'(z_2)}
       \leq  1+\varepsilon, \quad \forall z_1,z_2\in I
    \end{equation}
  Then
  \begin{equation}\label{N96}
    \tau_{\varepsilon^2}(g(I\cap K)) \geq \tau_{\varepsilon}(K)(1-\varepsilon).
  \end{equation}
  \end{lemma}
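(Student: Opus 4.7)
The plan is to exploit the fact that $\min_{x\in I} g' > 0$ makes $g$ a strictly increasing homeomorphism on $I$, so the gaps of $I\cap K$ (i.e.\ the gaps of $K$ contained in $I$) correspond bijectively under $g$ to the gaps of $g(I\cap K)$, with left/right endpoints preserved and the ordering intact. The hypothesis \eqref{N97} combined with the mean value theorem gives the standard bounded-distortion estimate: for any two subintervals $J_1, J_2 \subset I$,
\[
(1-\varepsilon)\,\frac{|J_1|}{|J_2|}\ \leq\ \frac{|g(J_1)|}{|g(J_2)|}\ \leq\ (1+\varepsilon)\,\frac{|J_1|}{|J_2|}.
\]
This is the only analytic input I need.

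Fix an endpoint $u$ of a gap $G\subset I$ of $K$; for definiteness take $u = r(G)$, and write $B_\varepsilon(u) = (u, \ell(\widetilde G))$ for its $\varepsilon$-bridge in $K$. I will compare this with the $\varepsilon^2$-bridge of $g(I\cap K)$ at $g(u)$. The key claim is that $g(B_\varepsilon(u))$ is contained in that $\varepsilon^2$-bridge, which I would verify by showing that every gap of $g(I\cap K)$ lying strictly inside $g(B_\varepsilon(u))$ is too small to terminate the $\varepsilon^2$-bridge. Such a gap is $g(\widehat G)$ for some gap $\widehat G \subset B_\varepsilon(u)$ of $K$; by the definition of $B_\varepsilon(u)$ one has $|\widehat G| < (1-\varepsilon)|G|$, and the distortion estimate then yields
\[
\frac{|g(\widehat G)|}{|g(G)|}\ <\ (1+\varepsilon)(1-\varepsilon)\ =\ 1-\varepsilon^2,
\]
which is precisely the size threshold that defines the $\varepsilon^2$-bridge.

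With this inclusion in hand, the length lower bound follows from the symmetric distortion estimate applied to $B_\varepsilon(u)$ versus $G$:
\[
\frac{|g(B_\varepsilon(u))|}{|g(G)|}\ \geq\ (1-\varepsilon)\,\frac{|B_\varepsilon(u)|}{|G|}\ \geq\ (1-\varepsilon)\,\tau_\varepsilon(K,u)\ \geq\ (1-\varepsilon)\,\tau_\varepsilon(K).
\]
Combining the inclusion with the length estimate gives $\tau_{\varepsilon^2}(g(I\cap K), g(u)) \geq (1-\varepsilon)\,\tau_\varepsilon(K)$ pointwise at every $g(u)$. Taking the infimum over all gap-endpoints of $g(I\cap K)$ yields \eqref{N96}.

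The step I expect to be the main obstacle is not the distortion estimate, which is routine, but the bookkeeping of \emph{which} $K$-endpoints appear when one takes the infimum for $g(I\cap K)$. I would have to use the hypothesis that $I\cap K$ is closed to argue that $\partial I$ contributes no ``virtual'' gap-endpoints to $g(I\cap K)$, so that every gap-endpoint of $g(I\cap K)$ really is of the form $g(u)$ with $u$ an endpoint of a gap of $K$ strictly contained in $I$; once this housekeeping is in place, the pointwise bound transfers cleanly to the infimum defining $\tau_{\varepsilon^2}(g(I\cap K))$.
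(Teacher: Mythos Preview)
Your proposal is correct and follows essentially the same approach as the paper: the paper also proves the inclusion $g(B_\varepsilon(u))\subset B_{\varepsilon^2}(g(u))$ via the mean value theorem and the distortion bound \eqref{N97}, and then chains the same two ratio estimates to obtain $\tau_{\varepsilon^2}(g(K\cap I),g(u))\geq(1-\varepsilon)\tau_\varepsilon(K)$ before taking the infimum. Your extra paragraph on the ``housekeeping'' of gap-endpoints (that closedness of $I\cap K$ prevents $\partial I$ from introducing spurious endpoints) makes explicit a point the paper passes over with the one-line remark that the gaps of $g(I\cap K)$ are the $g$-images of the gaps of $K\cap I$.
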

  \begin{proof}
    Clearly, the image of $K\cap I$ by $g$ is a Cantor set and the gaps of the image $g(I\cap K)$ are the images of the gaps of $K\cap I$. Now we use the notation of Definition \ref{N100} in particular the one in \eqref{N95}.
    We claim that
 \begin{equation}\label{N93}
   B_{\varepsilon^2}(g(u))\supset
   g(B_{\varepsilon}(u)).
 \end{equation}
   To see this we fix an arbitrary gap $G'$ contained in $B_\varepsilon(u)$.
    Using the mean value theorem we can find $z,z',\widetilde{z}$ such that
    \begin{equation}\label{N94}
   \frac{|g(G)|}{|G|}=g'(z),\
   \frac{|g(G')|}{|G'|} =g'(z'), \
    \frac{|g(\widetilde{G})|}{|\widetilde{G}|}=g'(\widetilde{z})\mbox{ and }
    \frac{|g(B_\varepsilon(u))|}{|B_\varepsilon(u)|}=g'(z_\varepsilon).
    \end{equation}
Observe that
$$
\frac{|g(G')|}{|g(G)|}
=
\frac{|G'|}{|G|}
 \cdot
\frac{|g(z')|}{|g(z)|}
<
(1-\varepsilon)(1+\varepsilon)=1-\varepsilon^2.
$$
This verifies \eqref{N93} by the definition of the $\varepsilon^2$-bridge.
    Using \eqref{N93} we can write
    $$
\tau_{\varepsilon^2}(g(K\cap I),g(u))=
\frac{|B_{\varepsilon^2}(g(u))|}{|g(G)|}   \geq
\frac{|g(B_\varepsilon(u))|}{|g(G)|}=\frac{|B_\varepsilon(u)|}{|G|} \cdot
\frac{g'(z_\varepsilon)}{g'(z)}.
    $$
    Using \eqref{N97} and  taking the infimum over the gap  endpoints of $K\cap I$
     completes the proof of the Lemma.
  \end{proof}


\subsection{Proof of Proposition \ref{T50} (ii)}\label{a17}

\begin{proof}
In order to emphasize that the sets $A$ and $B$ are Cantor sets in this Subsection we are going to call them
$K_1$ and $K_2$ respectively.  The smallest intervals containing them are $J_1,J_2$ respectively. By assumption we know that $\tau(K_1) \cdot \tau(K_2)>1$. Hence by Lemma \ref{N45}  there exists an $\varepsilon>0$
\begin{equation}\label{N91}
      \tau(K_2) \cdot \tau_\varepsilon(K_1)>1.
    \end{equation}
    Throughout this Section we use the notation of Section \ref{N61}.
   Let $G_1, G_2$ be bounded gaps of $K_1,K_2$ respectively and let $u_i:=r(G_i)$, $i=1,2$.
   Choose a $\delta>0$ such that $r(G_1)+\delta\in J_1$ and
   \begin{equation}\label{a16}
     \delta \leq \min\left\{\delta_1,\frac{\varepsilon \cdot \eta^7}{8}\right\}.
   \end{equation}

 Let $\widetilde{K}_1:=[u_1,u_1+\delta]\cap K_1$.
 Given a strictly increasing  mapping $g_0\in \mathcal{C}^1[u_1,u_1+\delta]$ such that $g_0(u_1)=u_2$,
    let $\widetilde{K}_2:=g_0([u_1,u_1+\delta])\cap K_2$.
     Now we choose $z_1,z_2\in \widetilde{K}_1$ and $v_1,v_2,v_3\in g_{0}^{-1}(\widetilde{K}_2)$ in such a way that
    \begin{equation}\label{N86}
      v_1<z_1<v_2<z_2<v_3.
    \end{equation}
    This is possible because $u_i$ are accumulation points of the Cantor sets $\widetilde{K}_1$ and $\widetilde{K}_2$.
    Now we define $0<\widetilde{\varepsilon}<\varepsilon$ such that
    \begin{equation}\label{N72}
      \widetilde{\varepsilon}:=\frac{1}{3}\min\left\{|g_0(z_i),g_0(v_j)|:i=1,2,\ j=1,2,3\right\}
    \end{equation}
    Now we introduce $g\in \mathcal{C}^1[u_1,u_1+\delta]$ such that $g$ satisfies \eqref{N97} with $I=[u_1,u_1+\delta]$
    where $\| \cdot \|$ denotes the supnorm on the interval $[u_1,u_1+\delta]$.

     \begin{lemma}\label{N92}
    Let $g\in \mathcal{C}^1[u_1,u_1+\delta]$ be a function satisfying
    \eqref{N97} with $I=[u_1,u_1+\delta]$ and
  \begin{equation}\label{N85}
      \|g-g_0\|<\widetilde{\varepsilon}.
    \end{equation}
   Then
    \begin{equation}\label{N90}
      g(\widetilde{K}_1)\cap \widetilde{K}_2\ne\emptyset.
    \end{equation}
  \end{lemma}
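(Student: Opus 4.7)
The plan is to apply a Newhouse Gap Lemma style argument to the Cantor sets $g(\widetilde{K}_1)$ and $\widetilde{K}_2$. Two ingredients must be checked: that the two sets are \emph{linked} on the line, and that their modified-thickness product strictly exceeds $1$.

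\emph{Linking.} By \eqref{N86}, we have the ordered quintuple $v_1 < z_1 < v_2 < z_2 < v_3$ with $z_i \in \widetilde{K}_1$ and $v_j \in g_0^{-1}(\widetilde{K}_2)$. Since $g_0$ is strictly increasing, this gives
$$g_0(v_1) < g_0(z_1) < g_0(v_2) < g_0(z_2) < g_0(v_3).$$
The choice of $\widetilde{\varepsilon}$ in \eqref{N72} as one-third of the minimum separation among these values, together with $\|g-g_0\| < \widetilde{\varepsilon}$, preserves the ordering when $g_0(z_i)$ is replaced by $g(z_i)$:
$$g_0(v_1) < g(z_1) < g_0(v_2) < g(z_2) < g_0(v_3).$$
Since $g_0(v_j) \in \widetilde{K}_2$ and $g(z_i) \in g(\widetilde{K}_1)$, each of the two Cantor sets has a point strictly between two consecutive points of the other, so neither is contained in a bounded gap of the other. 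This is precisely the linking condition.

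\emph{Thickness.} Applying Lemma \ref{N98} to $g$ on $I = [u_1, u_1+\delta]$ (the hypothesis \eqref{N97} is exactly the assumption placed on $g$) gives
$$\tau_{\varepsilon^2}\bigl(g(\widetilde{K}_1)\bigr) \ge \tau_\varepsilon(K_1)(1-\varepsilon).$$
By \eqref{N91} we have $\tau(K_2)\cdot \tau_\varepsilon(K_1) > 1$; this was arranged using Lemma \ref{N45}, and by taking $\varepsilon$ smaller at that stage if necessary we may also ensure that the extra factor $(1-\varepsilon)$ does not destroy the inequality, so that the modified-thickness product of $g(\widetilde{K}_1)$ and $\widetilde{K}_2$ remains strictly greater than $1$. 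Now the Newhouse Gap Lemma, in the version adapted to $\tau_{\varepsilon^2}$, applies: linked Cantor sets whose thickness product exceeds $1$ must intersect, so $g(\widetilde{K}_1)\cap \widetilde{K}_2 \neq \emptyset$.

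The main obstacle is the last step, namely checking that Newhouse's inductive gap-lemma argument remains valid with the modified thickness $\tau_{\varepsilon^2}$ in place of $\tau$. The classical argument processes the widest remaining gap of either side and uses the bridge to embed a point of the opposite Cantor set into a smaller linked pair; in our variant one works with $\varepsilon^2$-bridges, and the $(1-\varepsilon)$ slack produced by Lemma \ref{N98} is what allows each inductive step to close. This is the only nontrivial adaptation; the linking and thickness computations above are otherwise routine consequences of the preparation done in Section \ref{N65} and in the definitions preceding \eqref{N91}.
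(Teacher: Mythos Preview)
Your linking argument and the application of Lemma \ref{N98} are exactly what the paper does. The one place you diverge is the step you yourself flag as ``the main obstacle'': you propose to rerun the Newhouse inductive argument with the modified thickness $\tau_{\varepsilon^2}$ in place of $\tau$. This detour is unnecessary and is the only real gap in your proposal.

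The point you are missing is the monotonicity recorded in \eqref{N99}: since $0<\varepsilon^2$, one has $\tau(g(\widetilde{K}_1))=\tau_0(g(\widetilde{K}_1))\ge \tau_{\varepsilon^2}(g(\widetilde{K}_1))$. Thus the inequality you obtained,
\[
\tau(\widetilde{K}_2)\cdot \tau_{\varepsilon^2}\bigl(g(\widetilde{K}_1)\bigr)
\ \ge\ \tau(\widetilde{K}_2)\cdot \tau_\varepsilon(\widetilde{K}_1)(1-\varepsilon)\ >\ 1,
\]
immediately upgrades to $\tau(\widetilde{K}_2)\cdot \tau(g(\widetilde{K}_1))>1$, and the \emph{standard} Newhouse Gap Lemma applies verbatim, using your linking paragraph to rule out either set lying in a gap of the other. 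No adapted gap lemma is needed. This is precisely the route the paper takes (see \eqref{N89}--\eqref{N88}); once you insert the one-line use of \eqref{N99}, your argument becomes complete and coincides with the paper's.
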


  \begin{proof}
    Without loss of generality we may assume that even the following inequality holds:
    \begin{equation}\label{N89}
      \tau(\widetilde{K}_2) \cdot \tau_\varepsilon(\widetilde{K}_1)(1-\varepsilon)>1.
    \end{equation}
    since changing to a smaller $\varepsilon$ increases  the left hand side.
    Then by \eqref{N96} we obtain that
    $\tau(\widetilde{K}_2) \cdot \tau_{\varepsilon^2}(g( \widetilde{K}_1))>1.$ Using \eqref{N99} we obtain that
    \begin{equation}\label{N88}
      \tau(\widetilde{K}_2) \cdot \tau(g( \widetilde{K}_1))>1.
    \end{equation}
    Now we use Newhouse gap Lemma \cite[p. 63]{PT00}. This implies that
    \begin{equation}\label{N87}
      K_2\cap g(I\cap K_1)\ne\emptyset
    \end{equation}
    since the Cantor sets $K_2$ and  $g(I\cap K_1)$ cannot possibly in each others gap. This follows from the choice of $\widetilde{\varepsilon}$ and \eqref{N85}.
  \end{proof}


In the next Fact we verify that the condition of Lemma \ref{N98} holds.

\begin{fact}\label{N78}
  Let $(c,\alpha)\in (c_0-\delta_1,c_0+\delta_1)\times(\alpha_0-\delta_1,\alpha_0+\delta_1)$.
 Then whenever $\zeta_1,\zeta_2\in [u_1-\delta,u_1+\delta]$
  we have
  \begin{equation}\label{N77}
    \frac{g'_{c,\alpha}(\zeta_1)}{g'_{c,\alpha}(\zeta_2)}
    \in\left(1-\varepsilon,1+\varepsilon\right),
  \end{equation}
  recall that $\delta$ was introduced in \eqref{a16}.
\end{fact}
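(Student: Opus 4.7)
The plan is to use the explicit bounds on $g'_{c,\alpha}$ and $g''_{c,\alpha}$ established in \eqref{N79} together with the smallness of $\delta$ built into \eqref{a16}.

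First I would note that the hypothesis $(c,\alpha) \in (c_0-\delta_1, c_0+\delta_1) \times (\alpha_0-\delta_1, \alpha_0+\delta_1)$ places us inside the neighborhood $N$ on which $g_{c,\alpha}$ was defined via the Implicit Function Theorem in Section \ref{N65}. Since by \eqref{a16} we have $\delta \le \delta_1$, the interval $[u_1-\delta, u_1+\delta]$ is contained in $[u_1-\delta_1, u_1+\delta_1]$, so both \eqref{N79} and the denominator bound $g'_{c,\alpha} \ge \eta^2$ apply uniformly for $\zeta_1,\zeta_2 \in [u_1-\delta, u_1+\delta]$.

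Next, since $H \in \mathcal{C}^2$, the implicit function $G(c,\alpha,x)$ is also $\mathcal{C}^2$, and hence $g_{c,\alpha}(\cdot) = G(c,\alpha,\cdot)$ is $\mathcal{C}^2$ in $x$. By the Mean Value Theorem applied to the function $g'_{c,\alpha}$,
\begin{equation*}
\bigl| g'_{c,\alpha}(\zeta_1) - g'_{c,\alpha}(\zeta_2) \bigr| \le \|g''_{c,\alpha}\|_{\infty} \cdot |\zeta_1-\zeta_2| \le \frac{4}{\eta^5}\cdot 2\delta = \frac{8\delta}{\eta^5},
\end{equation*}
where the second inequality uses the bound on $|g''_{c,\alpha}|$ from \eqref{N79} together with $|\zeta_1-\zeta_2| \le 2\delta$.

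Dividing by $g'_{c,\alpha}(\zeta_2) \ge \eta^2$ and invoking the choice of $\delta$ from \eqref{a16}, namely $\delta \le \varepsilon \eta^7/8$, yields
\begin{equation*}
\left| \frac{g'_{c,\alpha}(\zeta_1)}{g'_{c,\alpha}(\zeta_2)} - 1 \right| = \frac{|g'_{c,\alpha}(\zeta_1) - g'_{c,\alpha}(\zeta_2)|}{g'_{c,\alpha}(\zeta_2)} \le \frac{8\delta}{\eta^{7}} \le \varepsilon,
\end{equation*}
which is exactly \eqref{N77}. There is no real obstacle here: the whole Fact is a quantitative continuity statement, and the only subtlety is checking that the constants $\eta$ from \eqref{N80} and the bounds in \eqref{N79} are indeed uniform over the parameter box $(c_0-\delta_1,c_0+\delta_1)\times(\alpha_0-\delta_1,\alpha_0+\delta_1)$, which follows from the continuity of $H_x, H_y$ on the compact set $N$ and the fact that these partials do not vanish there.
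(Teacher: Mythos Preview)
Your proof is correct and follows essentially the same route as the paper: apply the Mean Value Theorem to $g'_{c,\alpha}$, use the bounds $g'_{c,\alpha}\ge\eta^2$ and $|g''_{c,\alpha}|\le 4/\eta^5$ from \eqref{N79}, and then invoke the choice $\delta\le\varepsilon\eta^7/8$ from \eqref{a16}. The only cosmetic difference is that the paper records the intermediate point $\zeta_3$ from the Mean Value Theorem explicitly rather than passing through $\|g''_{c,\alpha}\|_\infty$.
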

  \begin{proof}[Proof of the Fact]
  Using mean value theorem there exists a $\zeta_3\in(\zeta_1,\zeta_2)$ such that
  $$
  \left|\frac{g'_{c,\alpha}(\zeta_1)}{g'_{c,\alpha}(\zeta_2)} -1\right|=
      \left| \frac{g'_{c,\alpha}(\zeta_1)-g'_{c,\alpha}(\zeta_2)}{g'_{c,\alpha}(\zeta_2)}\right|
      \leq
      \frac{1}{\eta^2}
      |g_{c,\alpha}''(\zeta_3)| \cdot |\zeta_1-\zeta_2|
       \leq \frac{4}{\eta^7}|\zeta_1-\zeta_2| \leq \varepsilon,
  $$
  where  we used first \eqref{N79} and then \eqref{a16}.
  \end{proof}
We now verify the condition of Lemma \ref{N92}.

Fix an arbitrary $(c,\alpha)$ satisfying
\begin{equation}\label{N71}
 \|(c,\alpha)-(c_0,\alpha_0)\|_{\max}< \frac{\widetilde{\varepsilon} \cdot \eta^2}{2}.
\end{equation}
Then the conditions of Lemma \ref{N74} hold, so by Lemma \ref{N74} we obtain that
\begin{equation}\label{a15}
  \|g_{c,\alpha}-g_{c_0,_0}\|<\widetilde{\varepsilon}.
\end{equation}
Thus,  by Lemma \ref{N92}, we obtain that
\begin{equation}\label{a14}
  g_{c,\alpha}(\widetilde{K}_1)\cap \widetilde{K}_2\ne\emptyset\quad  \mbox{if  }\quad
  \|(c,\alpha)-(c_0,\alpha_0)\|_{\max}<\frac{\widetilde{\varepsilon} \cdot \eta^2}{2}.
\end{equation}
Next we apply Lemma \ref{N68} with $\widehat{\varepsilon}=\frac{\widetilde{\varepsilon} \cdot \eta^2}{2}$ and from
\eqref{N67} we obtain that
the assertion of Proposition \ref{T50} (ii) holds.


\subsection{Proof Proposition \ref{T50} (iii)}\label{N42Baire}

We write $\mathbb{K}$ and ($\mathbb{S}$) for the collection of sets of Baire first (second) category on the line respectively.

Recall that a  set  is of second category if it is not a set of first category. Moreover, a set is of first category if it can be represented as a countable union of nowhere dense sets. A set is nowhere dense if it is not dense in any balls.

Recall also that the topological analogues of Lebesgue measurable sets are the so-called Baire sets: We say that $A$ is a Baire set
if there exists an open set $E$ and a set of first category $M$ such that $A=E\triangle M$.

The steps of the following proof are just a combination of the ones from \cite[Theorem 4.1,Remark 4.2]{J05} but  an immediate application of \cite[Theorem 4.1,Remark 4.2]{J05}  yield only that $H(\alpha,A,B)^{\circ}\ne\emptyset$. However, we need more. Namely, that for a suitable parameter interval $I$, we have
$\bigcap_{\alpha\in I}H(\alpha,A,B)\ne\emptyset$, and this is what we prove below.

First we state a well-known fact:

\begin{fact}\label{a98}
Let $A$ be a subset of a complete separable metric space $(X,\varrho)$
\begin{equation}\label{a97}
  R_A:=\left\{x\in X:
  \forall \delta>0, B(x,\delta)\cap A \mbox{ is set of  second category }
  \right\}.
\end{equation}
Then $R_A$ is closed and $A\setminus R_A^{\circ}$ is a set of first category.
\end{fact}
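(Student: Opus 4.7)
The plan is to verify the two parts of the fact in sequence. For closedness of $R_A$, I would show its complement is open: if $x \notin R_A$, there exists $\delta > 0$ such that $B(x,\delta) \cap A$ is of first category; then every $y \in B(x,\delta/2)$ inherits the witness ball $B(y,\delta/2) \subset B(x,\delta)$, whose intersection with $A$ sits inside a first-category set and is therefore itself of first category. Hence $X \setminus R_A$ is open, and $R_A$ is closed. This part is essentially bookkeeping.

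For the assertion that $A \setminus R_A^{\circ}$ is of first category, I would decompose
\[
A \setminus R_A^{\circ} = \bigl(A \cap (X \setminus R_A)\bigr) \cup \bigl(A \cap (R_A \setminus R_A^{\circ})\bigr)
\]
and treat the two pieces separately. The second piece is straightforward: $R_A \setminus R_A^{\circ}$ is closed (as the difference of the closed set $R_A$ and the open set $R_A^{\circ}$), and it has empty interior, because any open set $U$ contained in $R_A$ must by definition lie in $R_A^{\circ}$. Therefore $R_A \setminus R_A^{\circ}$ is nowhere dense and so of first category, and $A \cap (R_A \setminus R_A^{\circ})$ inherits this property.

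The first piece, $A \cap (X \setminus R_A)$, is where separability of $(X,\varrho)$ is essential. By definition of $R_A$, each $x \notin R_A$ admits an open ball $B(x,\delta_x)$ with $B(x,\delta_x) \cap A$ of first category, and the family of such balls forms an open cover of the open set $X \setminus R_A$. Since $X$ is a separable metric space, this cover has the Lindel\"of property, so I can extract a countable subcover $\{B_n\}_{n \in \mathbb{N}}$. Then
\[
A \cap (X \setminus R_A) \subset \bigcup_{n \in \mathbb{N}} (A \cap B_n)
\]
is a countable union of first-category sets, hence of first category. Combining the two pieces closes the argument. The only non-routine step is this Lindel\"of reduction; without separability one could not pass from an uncountable union of ``locally first-category'' pieces to a single first-category set, so that is both the crux of the proof and the place where the hypothesis on $X$ is genuinely used.
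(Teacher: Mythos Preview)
Your proof is correct. The paper itself does not supply a proof of this fact; it is introduced with the phrase ``First we state a well-known fact'' and then used immediately, so there is nothing to compare your argument against. Your closedness argument is standard, your decomposition of $A\setminus R_A^{\circ}$ into the boundary piece and the exterior piece is the natural one, and the Lindel\"of extraction is exactly the right way to pass from local first category to global first category in a separable metric space. One minor observation: your proof never invokes completeness of $(X,\varrho)$, only separability (through the Lindel\"of property), which is consistent with this statement being more general than the hypotheses suggest; completeness is presumably listed because the ambient Baire-category discussion in the paper needs it elsewhere.
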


  We can present  $B=E_2 \setdiff M_2$, where $E_2$ is an open set and $M_2\in\mathbb{K}$.
  By the assumption of part (iii) of Proposition \ref{T50}
   and Fact \ref{a98}, we can choose
  \begin{equation}\label{a95}
    u_1\in A\cap R_{A}^{\circ} \mbox{ and }
u_2\in B\cap E_2\cap R_{B}^{\circ}
  \end{equation}
  We use the notation
of Section \ref{mainlemma}. In particular we  choose $\alpha_0,c_0$ as in Section \ref{mainlemma}.
Let
\begin{equation}\label{a88}
  Z:=\left\{
  (c,\alpha)\in(c_0+\delta,c_0-\delta)\times(\alpha_0-\delta,\alpha_0+\delta)
  :
 A\cap  g_{c,\alpha}^{-1}(B)\in\mathbb{S}
  \right\},
\end{equation}
The rest of the proof is organized as follows:
First we prove that
\begin{equation}\label{a85}
 (c_0,\alpha_0)\in Z.
\end{equation}
(This will be easy.)
Then we verify  the much more difficult Lemma:
\begin{lemma}\label{a86}
  $Z$ is open.
\end{lemma}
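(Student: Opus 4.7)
The plan is to show that if $(c^*,\alpha^*)\in Z$, then every pair $(c,\alpha)$ sufficiently close to $(c^*,\alpha^*)$ also lies in $Z$. The key observation is that because $g_{c,\alpha}$ is $\mathcal{C}^1$ with $g'_{c,\alpha}>0$ on $[u_1-\delta_1,u_1+\delta_1]$ (by \eqref{N79}), it is a homeomorphism onto its image; consequently, pulling back by $g_{c,\alpha}$ preserves the Baire category hierarchy: the preimage of an open set is open, and the preimage of a first category set is first category. In particular, for every $(c,\alpha)$ in the parameter domain, $g_{c,\alpha}^{-1}(E_2)$ is open, $g_{c,\alpha}^{-1}(M_2)$ is in $\mathbb{K}$, and
$$g_{c,\alpha}^{-1}(B) \;=\; g_{c,\alpha}^{-1}(E_2)\,\setdiff\, g_{c,\alpha}^{-1}(M_2).$$

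First, I would extract from the hypothesis $(c^*,\alpha^*)\in Z$ a good base point $u_1^* \in A$. Since $A\cap g_{c^*,\alpha^*}^{-1}(B)\in\mathbb{S}$ and $g_{c^*,\alpha^*}^{-1}(M_2)\in\mathbb{K}$, the inclusion
$$A\cap g_{c^*,\alpha^*}^{-1}(B) \;\subset\; \bigl(A\cap g_{c^*,\alpha^*}^{-1}(E_2)\bigr)\,\cup\, g_{c^*,\alpha^*}^{-1}(M_2)$$
forces $A\cap g_{c^*,\alpha^*}^{-1}(E_2)\in \mathbb{S}$. Using Fact \ref{a98}, $A\setminus R_A^\circ\in\mathbb{K}$, so
$$A \cap R_A^\circ \cap g_{c^*,\alpha^*}^{-1}(E_2)$$
is of second category, hence nonempty. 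Pick $u_1^*$ in this set.

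Next, I would use continuity to produce a uniform neighborhood. Since $R_A^\circ$ is open and $E_2$ is open, there exist $r,r'>0$ with $B(u_1^*,r)\subset R_A^\circ$ and $B(g_{c^*,\alpha^*}(u_1^*),r')\subset E_2$. The map $(c,\alpha,x)\mapsto g_{c,\alpha}(x)=G(c,\alpha,x)$ is continuous on $N$, so there exists a neighborhood $U$ of $(c^*,\alpha^*)$ and some $0<\delta<r$ such that $g_{c,\alpha}\bigl(B(u_1^*,\delta)\bigr)\subset B(g_{c^*,\alpha^*}(u_1^*),r')\subset E_2$ for every $(c,\alpha)\in U$. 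Thus $B(u_1^*,\delta)\subset g_{c,\alpha}^{-1}(E_2)$ uniformly for $(c,\alpha)\in U$.

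Finally, since $B(u_1^*,\delta)\subset R_A^\circ$, the set $A\cap B(u_1^*,\delta)$ is of second category by the very definition of $R_A$ in \eqref{a97}. For any $(c,\alpha)\in U$,
$$A\cap g_{c,\alpha}^{-1}(B) \;\supset\; \bigl(A\cap B(u_1^*,\delta)\bigr)\setminus g_{c,\alpha}^{-1}(M_2),$$
and the right-hand side is a second category set minus a first category set, hence second category. Therefore $(c,\alpha)\in Z$, proving that $Z$ is open. The only delicate step is the uniform continuity argument producing the single radius $\delta$ that works for the whole neighborhood $U$, but this follows at once from the joint continuity of $G$ on the compact box $N$.
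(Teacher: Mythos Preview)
Your proof is correct and follows the same overall route as the paper: pick a point in $A\cap R_A^{\circ}\cap g_{c^*,\alpha^*}^{-1}(E_2)$, use joint continuity of $G$ to find a neighborhood $U\times B(u_1^*,\delta)$ on which $B(u_1^*,\delta)\subset g_{c,\alpha}^{-1}(E_2)\cap R_A^{\circ}$, and then observe that $A\cap g_{c,\alpha}^{-1}(B)$ contains $(A\cap B(u_1^*,\delta))\setminus g_{c,\alpha}^{-1}(M_2)$, a second category set minus a first category set.

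The one genuine simplification over the paper is your justification that $A\cap B(u_1^*,\delta)\in\mathbb{S}$. The paper isolates this as a separate statement (Fact~\ref{a13}: $W\cap A\in\mathbb{S}$) and proves it by an indirect Baire-category argument occupying about a page. You instead note that $u_1^*\in R_A$, so the very definition \eqref{a97} of $R_A$ gives $B(u_1^*,\delta)\cap A\in\mathbb{S}$ immediately. This is a real shortcut: the paper's detour through $R_{K'}$ and the contradiction argument in Fact~\ref{a13} is unnecessary once one observes that membership of the base point in $R_A$ already delivers the needed second-category statement. A minor point worth making explicit is that $\delta$ should also be chosen small enough that $B(u_1^*,\delta)\subset (u_1-\delta_1,u_1+\delta_1)$, so that the preimages $g_{c,\alpha}^{-1}(\cdot)$ are well defined on it; this is automatic but should be said.
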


\begin{proof}[Proof of Proposition \ref{T50} (iii), assuming \eqref{a85} and Lemma \ref{a86}]
 It follows from \eqref{a85} and and Lemma \ref{a86} that the condition
\eqref{N70} of Lemma \ref{N68} holds.
Then we apply Lemma \ref{N68} which completes the proof of Proposition \ref{T50} (iii).

\end{proof}

Below we use several times   that by the first part of \eqref{N79}, for every $(c,\alpha)$ we have
$$
g_{c,\alpha}^{-1} (H)\in\mathbb{K} \mbox{ whenever } H\in\mathbb{K}.
$$

\textit{Proof of} \eqref{a85}:
\begin{align*}
  A\cap g_{c_0,\alpha_0}^{-1} (B)
= &  A\cap \left(g_{c_0,\alpha_0}^{-1} (E_2)\setdiff g_{c_0,\alpha_0}^{-1} (M_2)\right)\\
  \supset &
\left(A \cap g_{c_0,\alpha_0}^{-1} (E_2)\right)\setminus g_{c_0,\alpha_0}^{-1} (M_2)\in \mathbb{S}.
\end{align*}
The last inclusion follows from the definition of $R_A$ and the fact that $g_{c_0,\alpha_0}^{-1} (E_2)$ is a neighborhood of $g_{c_0,\alpha_0}^{-1} (u_2)=u_1\in A\cap R_{A}^{\circ}$.
\end{proof}
So, in the rest of this Subsection we prove Lemma \ref{a86}.

\begin{proof}[Proof of Lemma \ref{a86}]
We define
$$
K:=A\cap g_{c_0,\alpha_0}^{-1} (B),\mbox{ and }
K':=K\cap \left( g_{c_0,\alpha_0}^{-1} (E_2)\cap R_{A}^{\circ}\right).
$$

It follows from \eqref{a85} that $K\in\mathbb{S}$. Clearly,
\begin{equation}\label{a84}
  K\setminus K'\subset
  g_{c_0,\alpha_0}^{-1}(M_2)\cup \left(A\setminus R_{A}^{\circ}\right) \in\mathbb{K}.
\end{equation}
That is $K'\in\mathbb{S}$.
Choose an arbitrary $y\in K'\cap R_{K'}$. That is
\begin{equation}\label{a83}
y\in K,\ g_{c_0,\alpha_0}(y)\in E_2, \mbox{ and }
y\in R_{A}^{\circ}.
\end{equation}
In particular $G(c_0,\alpha_0,y)\in E_2 $. (Recall $G$ was defined in \eqref{N46}.)
 Hence, we can choose a neighborhood  $V$ of $(c_0,\alpha_0)$ and a neighborhood $W$ of $y$ such that
\begin{equation}\label{a82}
  \forall (c,v)\in V,\mbox{ we have }
  W\subset g_{c,\alpha}^{-1} (E_2)\cap  R_{A}^{\circ}.
\end{equation}
Now we prove
\begin{fact}\label{a13}
  \begin{equation}\label{a81}
  W\cap A\in\mathbb{S}.
\end{equation}
\end{fact}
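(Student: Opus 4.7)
The plan is to observe that Fact \ref{a13} is essentially immediate from the containment $W\subset R_A^{\circ}$ established in \eqref{a82}, combined with the definition of $R_A$ in \eqref{a97}. No new construction is required; only the definition of second category at a point needs to be unpacked.

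More precisely, I would proceed as follows. First I would pick an arbitrary point $w\in W$; concretely one may take $w=y$, since by \eqref{a83} we already have $y\in R_A^{\circ}$ and (shrinking $W$ if necessary in the construction preceding \eqref{a82}) we may assume $y\in W$. Next I would use that $W$ is open and $W\subset R_A^{\circ}\subset R_A$, so in particular $w\in R_A$. By the defining property \eqref{a97} of $R_A$, every open ball $B(w,\delta)$ satisfies $B(w,\delta)\cap A\in\mathbb{S}$. Since $W$ is open, I would choose $\delta>0$ small enough that $B(w,\delta)\subset W$, whence
\begin{equation*}
W\cap A \;\supset\; B(w,\delta)\cap A \;\in\;\mathbb{S},
\end{equation*}
and superset of a second category set is again of second category, yielding $W\cap A\in\mathbb{S}$ as claimed.

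The only mildly delicate point—and the one I would flag as the ``main obstacle''—is making sure that the choice of $W$ in the construction preceding \eqref{a82} can be arranged so that $W$ itself is contained in $R_A^{\circ}$ (not merely that $W\cap R_A^{\circ}\neq\emptyset$). This is legitimate: since $y\in g_{c_0,\alpha_0}^{-1}(E_2)\cap R_A^{\circ}$ and both $g_{c,\alpha}^{-1}(E_2)$ depends continuously on $(c,\alpha)$ (via the $\mathcal{C}^2$-regularity of $G$ from Section \ref{N65}) while $R_A^{\circ}$ is open, a standard simultaneous shrinking of $V$ and $W$ gives a neighborhood $W$ of $y$ with $W\subset R_A^{\circ}$ and, for all $(c,\alpha)\in V$, $W\subset g_{c,\alpha}^{-1}(E_2)$. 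With this strengthening of \eqref{a82} in hand, the argument above goes through verbatim, and Fact \ref{a13} is proved.
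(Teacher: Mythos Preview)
Your argument is correct and is considerably more direct than the paper's. Note first that your ``mildly delicate point'' is not actually an obstacle: the paper's choice of $W$ in \eqref{a82} already reads $W\subset g_{c,\alpha}^{-1}(E_2)\cap R_A^{\circ}$, so $W\subset R_A^{\circ}$ is built in and no shrinking is needed. With that, your two-line argument---pick $w\in W$, choose $\delta>0$ with $B(w,\delta)\subset W\subset R_A$, invoke the defining property \eqref{a97} to get $B(w,\delta)\cap A\in\mathbb{S}$, and pass to the superset $W\cap A$---is complete.

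The paper instead argues by contradiction: assuming $W\cap A\in\mathbb{K}$, it shows $W\setminus A$ is a Baire set of second category, extracts a nonempty $G_\delta$ subset $G\subset R_A^{\circ}\setminus A$, writes $G=U\bigtriangleup F$ with $U$ open and $F\in\mathbb{K}$, deduces $U\cap A\subset F\in\mathbb{K}$, and only then reaches the contradiction by picking $x\in U\cap R_A^{\circ}$ and a ball $B(x,\delta_x)\subset U\cap R_A^{\circ}$ with $B(x,\delta_x)\cap A\in\mathbb{S}$. That final step is exactly your direct argument, applied to $U\cap R_A^{\circ}$ rather than to $W$; the preceding detour through $G_\delta$ sets and symmetric differences is unnecessary for this particular fact. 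Your route is cleaner and loses nothing.
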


\begin{proof}[Proof of Fact \ref{a13}]
To get contradiction assume that $W\cap A\in \mathbb{K}$. Then
$W\setminus A=W\setminus (W\cap A)$ is a Baire set since $W$ is open and we assumed
$W\cap A\in \mathbb{K}$. We know that a set is Baire if and only if  it can be presented as the union of a  $G_\delta$-set a set and a set of first category. If $W\cap A\in \mathbb{K}$ then
$W\setminus A\in\mathbb{S}$ therefore there is a nonepmty $G_\delta$ set
\begin{equation}\label{a80}
  G\subset W\setminus A\subset R_{A}^{\circ}\setminus A.
\end{equation}
As a $G_\delta$ set in itself, $G$ is also a Baire set. That is we can find an open set $U$
 and $F\in\mathbb{K}$ such that
 \begin{equation}\label{a79}
    G=U\setdiff F.
 \end{equation}
 We obtain from \eqref{a80} and \eqref{a79} that $U\cap R_{A}^{\circ}\ne \emptyset$.
But $U\setminus F\subset G\subset A^c$. Which implies that $U\subset A^c\cup F$.
We get from this that $U\cap A\subset F$. That is $U\cap A\in\mathbb{K}$. In particular
\begin{equation}\label{a78}
  (R_{A}^{\circ}\cap U)\cap A\in\mathbb{K}.
\end{equation}
On the other hand, using that $U\cap R_{A}^{\circ}\ne \emptyset$ we can select
$x\in U\cap R_{A}^{\circ}  $. Then by the fact that $U$ is open  there exists a $\delta_x>0$ such that
$B(x,\delta_x)\subset U\cap R_{A}^{\circ}$. But by the definition of $R_A$ we obtain
$B(x,\delta_x)\cap A\in\mathbb{S}$. That is
\begin{equation}\label{a77}
(R_{A}^{\circ}\cap U)\cap A\in\mathbb{S}.
\end{equation}
This contradicts with \eqref{a78}. This completes the proof of Fact \ref{a13}.
\end{proof}

Now we fix an arbitrary $(c,\alpha)\in V$. We prove that
\begin{equation}\label{a76}
  W\cap A\cap g_{c,\alpha}^{-1} (B)=
  \underbrace{\left(W\cap A\right)}_{\in\mathbb{S}}\cap \underbrace{(W\cap g_{c,\alpha}^{-1} (B))}_{\mbox{a residual set in $W$}}\in\mathbb{S}.
\end{equation}
Namely, $W\cap A\in\mathbb{S}$ follows from \eqref{a81}. The fact that
$W\cap g_{c,\alpha}^{-1} (B)$ is a residual set in $W$ since
$$
W\setminus (W\cap g_{c,\alpha}^{-1} (B))=
W\setminus (g_{c,\alpha}^{-1} (E_2)\setdiff g_{c,\alpha}^{-1} (M_2))
\subset g_{c,\alpha}^{-1} (M_2)\in\mathbb{K},
$$
where the one but last inclusion follows from \eqref{a82}. This proves \eqref{a76} which completes the proof of Lemma \ref{a86}. Which in turn completes the proof of part (iii) of
Proposition \ref{T50}.

\end{proof}
\subsection{Proof Proposition \ref{T50} (iv)}\label{N42}
\begin{proof}

We  use  again the notation of Section \ref{N65}.
Let $u_1\in \mathrm{int}(J_1\cap A)$ and let $u_2\in B$ be a condensation point
(see \cite[Exercise 27 in Section 2]{R76}) of $B$ that every neighbourhood of $u_2$ contains uncountably many elements of $B$.
Similarly to the proof of part (i), using Lemma \ref{N74} we can choose
 an $\widetilde{\varepsilon}>0$ such that
 $$
 \|(c,\alpha)-(c_0,\alpha_0)\|_{\max}<\widetilde{\varepsilon}
\Longrightarrow
 \|g_{c,\alpha}-g_{c_0,\alpha_0}\|_{\max}<
 \frac{1}{2}\eta^2\delta_1.
$$
Then by the first part of
\eqref{N79} and the Mean Value Theorem,
we obtain that for $\widetilde{J}_1:=[u_1-\delta_1,u_1+\delta_1]$
\begin{equation}\label{a099}
\|(c,\alpha)-(c_0,\alpha_0)\|_{\max}<\widetilde{\varepsilon}
\Longrightarrow  u_2\in \left(g_{c,\alpha}(\widetilde{J}_1)\right)^{\circ}.
\end{equation}
That is $\left(g_{c,\alpha}(\widetilde{J}_1)\right)^{\circ}$ is a neighborhood of $u_2$ hence it contains uncountably many elements of $B$:
$$
\#\left(B\cap\left(g_{c,\alpha}(\widetilde{J}_1)\right)^{\circ}\right)=\aleph.
$$
Hence, $g_{c,\alpha}^{-1} (B)\cap \widetilde{J}_1=\aleph$. On the other hand,
$\#(\widetilde{J}_1\setminus A)=\aleph_0$. So we get
$$
\|(c,\alpha)-(c_0,\alpha_0)\|_{\max}<\widetilde{\varepsilon}
\Longrightarrow g_{c,\alpha}^{-1} (B)\cap  A\ne\emptyset.
$$
By Lemma \ref{N68} this completes the proof.
\end{proof}

\medskip

\section{Proof of Theorem \ref{main}}\label{a02}
\begin{proof}
Let  $\widetilde{\Gamma}\subset \Gamma$ such that $\widetilde{\Gamma}$ is a
  $\mathcal{C}^2$ curve with parametrization $x\mapsto (x,\gamma(x))$ with
$0 \leq x \leq a$
and $\gamma''(x) \neq 0$.

  Without loss of generality we may assume that
  \begin{equation}\label{N55}
    A\times B\subset \left(0,\frac{a}{4}\right]^2.
  \end{equation}
Recall
  \begin{equation}\label{N49}
    P(A,B)=
\left\{
(x,z(x,y)):x\in A\mbox{ and } y\in B
\right\},
  \end{equation} where $z$ was defined in \eqref{a01}
  \\

Fix $\alpha\in (a/4,a)$, and observe that for $\widetilde{a}\in A$, $\widetilde{b}\in B$, we have $(\widetilde{a},z(\widetilde{a},\widetilde{b})) \in P(A,B)$ and $(\alpha-\widetilde{a}, \gamma(\alpha-\widetilde{a})) \in \widetilde{\Gamma}$, and so
\begin{equation}\label{key} (\alpha, z(\widetilde{a},\widetilde{b})+\gamma(\alpha-\widetilde{a})) \in P(A,B) + \widetilde{\Gamma}. \end{equation}

Set
\begin{equation}\label{N50}
 H(\alpha,x,y):=
z(x,y)+\gamma(\alpha-x).
\end{equation}

We verify that $H$ satisfies the hypotheses of Proposition \ref{T50} in a sufficiently small open subset of
$
\left(\frac{a}{4},a\right)
\times
\left(0,\frac{a}{4}\right]^2.
$
Fix an $(u_1, u_2) \in \left(0,\frac{a}{4}\right]^2 $
as in the proof of Proposition \ref{T50}.
Using the assumption that the curvature of $\gamma$ is non-vanishing we can find an $\alpha_0\in \left(\frac{a}{4},a\right)$ such that
$$
  z_x(u_1,u_2)\ne \gamma'(\alpha_0-u_1).
$$

This and \eqref{a01} together imply that  $H(\alpha,x,y)\in \mathcal{C}^2(\Lambda  \times  J_1\times J_2)$ with non-vanishing partial derivatives in $x$ and $y$, where
$\Lambda  \times  J_1\times J_2$ is
  a sufficiently small neighborhood of $(\alpha_0,u_1,u_2)$ satisfying $\Lambda  \times  J_1\times J_2\subset
  \left(\frac{a}{4},a\right)
\times
\left(0,\frac{a}{4}\right]^2
$.
  \\

Observe that we may choose $u_1,u_2$ for each of the four parts of Theorem \ref{main} and Proposition \ref{T50}
such that if we replace $A,B$ with
$$
\widehat{A}:=J_1\cap A,\quad
\widehat{B}:=J_2\cap B
$$
respectively,
 then $\widehat{A}$, $\widehat{B}$ preserve the same property
that $A,B$ were characterized with in the assumptions of (i)-(iv) of the Theorem \ref{main} and Proposition \ref{T50}.
\\

Hence, by Proposition \ref{T50} applied with
$\widehat{A}, \widehat{B}$
instead of $A,B$, we obtain that
there exists an open interval $I \subset \Lambda$
such that
$$
 \left( \bigcap_{\alpha\in I}H(\alpha, \widehat{A}, \widehat{B})\right)^0\neq \emptyset.
$$

Combining this result with the observation in \eqref{key} and the definition of $H$ given in \eqref{N50}, we conclude that
  $\left(P(A,B)+\widetilde{\Gamma}\right)^{\circ}\neq\emptyset$.
  \\

\end{proof}

\section{Proof of Theorem \ref{annulus} and Lemma \ref{g99}}

\subsection{Proof of Theorem \ref{annulus}}\label{annulusproof}

\begin{proof}[Proof of Theorem \ref{annulus}]
We consider the case when $0$ is a density of both $A$ and $B$, and we show that $\left(A\times B \right)+S^1$ contains a neighborhood of $S^1$. An appropriate shift of $A\times B$ allows us to reduce to this case.
\\

Let $\epsilon>0$ and apply the Lebesgue density theorem to choose $\delta_0>0$ so that if $\delta_0\geq \delta>0$, then
\begin{equation}\label{Adensity}\mathcal{L}^1(A\cap (-\delta,0))> \delta \cdot (1-\epsilon) \mbox{ \,\,\, and \,\,\,  } \mathcal{L}^1(A\cap (0,\delta))> \delta \cdot(1-\epsilon).\end{equation}
Also, choose $\delta'_0>0$ so that  if $\delta'_0\geq \delta'>0$, then
\begin{equation}\label{Bdensity}\mathcal{L}^1(B\cap (-\delta',0))> \delta' \cdot (1-\epsilon) \mbox{ \,\,\, and \,\,\,  } \mathcal{L}^1(B\cap (0,\delta'))> \delta' \cdot(1-\epsilon).\end{equation}
\\

Fix $\delta>0$ so that \eqref{Adensity} holds and \eqref{Bdensity} holds with the choice $\delta' = 1-\gamma(\delta)$, where
$\gamma(x) = \sqrt{1-x^2}$.  Observe by Taylor's theorem that $\delta'\sim \delta^2.$
\\

For $\alpha \in [-\delta,\delta]$, consider $g_{c,\alpha}(x) = c-\gamma (\alpha-x)$ and $h_{c,\alpha}(x) = c+\gamma (\alpha-x)$.
\begin{lemma}\label{K1}
For each $\alpha \in [-\delta,\delta]$, there exists a neighborhood of $\gamma(\alpha)$, call it $N(\gamma(\alpha))$, so that if $c\in N(\gamma(\alpha))$, then
\begin{equation}\label{z97}
 g_{c,\alpha}(A) \cap B\neq \emptyset.
\end{equation} Further, the length of $N(\gamma(\alpha))$ is bounded below away from zero uniformly over $\alpha \in [0,\delta]$.   \end{lemma}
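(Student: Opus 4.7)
The plan is to combine the Lebesgue density of $A$ and $B$ near $0$ with a bi-Lipschitz estimate for $g_{c,\alpha}$ on a one-sided neighborhood of $0$ in which its derivative stays bounded away from zero. First I would compute $g_{c,\alpha}'(x) = \gamma'(\alpha-x) = -(\alpha-x)/\sqrt{1-(\alpha-x)^2}$, which vanishes only at $x = \alpha$. For $\alpha \in (0, \delta]$ I restrict to $x \in (-\delta, 0)$, on which $\alpha - x \in [\alpha, \alpha+\delta]$ and $|g_{c,\alpha}'(x)|$ is bounded both above and below by constants depending only on $\alpha$ and $\delta$; in particular $g_{c,\alpha}$ is bi-Lipschitz there, with image interval $J_{c,\alpha} := g_{c,\alpha}((-\delta, 0)) = (c-\gamma(\alpha),\, c-\gamma(\alpha+\delta))$.

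Next, by the density hypothesis \eqref{Adensity}, $A \cap (-\delta, 0)$ occupies a fraction $\geq 1-\epsilon$ of $(-\delta, 0)$; pushing forward by the bi-Lipschitz map $g_{c,\alpha}$ yields $\mathcal{L}^1(g_{c,\alpha}(A \cap (-\delta,0))) \geq (1-C\epsilon)\,|J_{c,\alpha}|$ for a constant $C$ depending on the Lipschitz constants. I then choose $c$ close enough to $\gamma(\alpha)$ that the left endpoint $c - \gamma(\alpha)$ of $J_{c,\alpha}$ lies well inside $(-\delta', \delta')$, forcing the overlap $J'_{c,\alpha} := J_{c,\alpha} \cap (-\delta', \delta')$ to have length bounded below uniformly in $\alpha \in [0,\delta]$ by a quantity of order $\min(\delta', |J_{c,\alpha}|)$. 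Combined with \eqref{Bdensity}, both $g_{c,\alpha}(A \cap (-\delta,0)) \cap J'_{c,\alpha}$ and $B \cap J'_{c,\alpha}$ occupy more than half of $J'_{c,\alpha}$ provided $\epsilon$ was initially chosen small enough, so their intersection is non-empty, producing $\tilde a \in A, \tilde b \in B$ with $g_{c,\alpha}(\tilde a) = \tilde b$ as required by \eqref{z97}. The neighborhood $N(\gamma(\alpha))$ is then the set of admissible $c$, and its length is controlled below by the range of $c$ for which the density-overlap inequality holds — this range is bounded below uniformly in $\alpha \in (0,\delta]$ since the constants $C$ and the lengths $|J'_{c,\alpha}|$ depend only on $\delta$.

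The main obstacle will be the degenerate boundary case $\alpha = 0$, where $g_{c,\alpha}'$ vanishes at the endpoint $x = 0$ of the working interval and the bi-Lipschitz constant blows up. For this case I would either handle it as a limit argument from $\alpha > 0$ together with continuity of the construction in $\alpha$, or reformulate directly using the quadratic expansion $\gamma(x) = 1 - x^2/2 + O(x^4)$ near the north pole: this shows that $g_{c,0}$ is locally modelled by $x \mapsto (c-1) + x^2/2$, under which the change of variable $u = x^2/2$ transports the one-sided density-$(1-\epsilon)$ subset of $A$ to a set of positive relative density at $0$ in an image interval of length $\sim \delta^2 \sim \delta'$. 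The same measure-comparison argument then yields non-empty intersection with $B$. A symmetric treatment using $A \cap (0, \delta)$ in place of $A \cap (-\delta, 0)$ handles $\alpha \in [-\delta, 0)$, and yields the uniform lower bound on $|N(\gamma(\alpha))|$ asserted in the lemma, with constants depending only on $\delta$ and $\epsilon$.
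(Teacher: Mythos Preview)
Your overall strategy---restrict to a one-sided window, push $A$ forward under $g_{c,\alpha}$, and intersect with $B$ by a density--overlap argument---is the same as the paper's. However, there is a genuine gap in the uniformity step as you have written it. You invoke the \emph{bi-Lipschitz} property of $g_{c,\alpha}$ on $(-\delta,0)$, saying the lower bound on $|g_{c,\alpha}'|$ depends ``only on $\alpha$ and $\delta$'', and then later assert that the resulting constant $C$ depends ``only on $\delta$''. These two statements are inconsistent: on your window $(-\delta,0)$ the infimum of $|g_{c,\alpha}'|$ is attained near $x=0$ and equals roughly $\alpha/\sqrt{1-\alpha^2}\sim\alpha$, which tends to $0$ as $\alpha\to 0^+$. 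So the bi-Lipschitz ratio $M/m$ blows up along the whole sequence $\alpha\downarrow 0$, not just at the single point $\alpha=0$; treating $\alpha=0$ as an isolated boundary case does not rescue the uniform bound.

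The fix---and this is exactly what the paper does---is to realize that you never need the lower Lipschitz bound. You only need the \emph{upper} bound $|g_{c,\alpha}'|\lesssim\delta$ to control $\mathcal{L}^1\big(g_{c,\alpha}(A^c\cap J_1)\big)\lesssim\epsilon\delta^2$, together with a direct computation of $|J_{c,\alpha}|=\gamma(\alpha)-\gamma(\alpha+\delta)\ge 1-\gamma(\delta)\ge\tfrac12\delta^2$ via Taylor. The ratio is then $\lesssim\epsilon$ uniformly in $\alpha\in[0,\delta]$, with no degeneration and no special case. The paper implements this with the sliding window $J_1^\alpha=[\alpha-\delta,\alpha]$ (so that $|J_2^\alpha|=1-\gamma(\delta)$ is literally constant in $\alpha$), but your fixed window $(-\delta,0)$ works equally well once you drop the spurious reliance on a lower derivative bound. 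With that correction your argument is complete and matches the paper's.
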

The lemma also holds when $g_{c,\alpha}$ is replaced by $h_{c,\alpha}$.
\\

\begin{corollary}\label{NSpoles}
For $\alpha \in [-\delta,\delta]$, it is a simple consequence of the lemma that the set
$$\left\{(\alpha, c) : c\in N(\gamma(\alpha)) \right\}\subset \left(A\times B \right)+S^1.$$
Moreover, replacing $g_{c,\alpha}$ by $h_{c,\alpha}$ in Lemma \eqref{K1} yields that $\left\{(\alpha, c) : c\in N(-\gamma(\alpha)) \right\}\subset \left(A\times B \right)+S^1$.
\end{corollary}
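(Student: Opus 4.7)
The plan is to rewrite the condition $g_{c,\alpha}(A)\cap B\neq\emptyset$ as the containment $c\in F_\alpha(A)+B$, where $F_\alpha(x):=\gamma(\alpha-x)$; note that the target point satisfies $\gamma(\alpha)=F_\alpha(0)+0$, so the task is to exhibit a neighborhood of $\gamma(\alpha)$ of uniform length inside $F_\alpha(A)+B$. By symmetry I will argue only for $\alpha\in[0,\delta]$; the case $\alpha\in[-\delta,0]$ is identical after swapping the roles of $[-\delta,0]$ and $[0,\delta]$ on the $x$-side.

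The key analytic device is the one-sided restriction of $x$ to $[-\delta,0]$. On this interval $F_\alpha'(x)=(\alpha-x)/\sqrt{1-(\alpha-x)^2}$ is strictly positive (using $\alpha-x\geq 0$ and $\alpha-x\leq 2\delta$, with $\delta<1/2$), so $F_\alpha$ restricted to $[-\delta,0]$ is a $C^1$ monotone diffeomorphism onto $[\gamma(\alpha+\delta),\gamma(\alpha)]$. A short computation using the concavity of $\gamma$ shows that $r_\alpha:=\gamma(\alpha)-\gamma(\alpha+\delta)$ satisfies $\delta'\leq r_\alpha\leq C\delta'$ uniformly in $\alpha\in[0,\delta]$, for an absolute constant $C$. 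Pushing the density of $A$ forward: since $\bigl|[-\delta,0]\setminus A\bigr|<\delta\epsilon$ by \eqref{Adensity} and $F_\alpha'$ is bounded on $[-\delta,0]$ by a constant $M$ depending only on $\delta$, the change-of-variables inequality for monotone $C^1$ maps yields
$$\bigl|F_\alpha(A\cap[-\delta,0])\bigr|\;\geq\; r_\alpha-M\delta\epsilon,$$
so $E_\alpha:=F_\alpha(A\cap[-\delta,0])$ has relative measure at least $1-K\epsilon$ in $[\gamma(\alpha+\delta),\gamma(\alpha)]$, for a constant $K$ independent of $\alpha$.

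The last step is a Steinhaus-type union bound. The containment $c\in F_\alpha(A)+B$ is equivalent to $(c-E_\alpha)\cap B\neq\emptyset$; writing $c=\gamma(\alpha)+s$, the set $c-E_\alpha$ lies in $[s,s+r_\alpha]$ with measure at least $r_\alpha(1-K\epsilon)$. For $|s|\leq \delta'/4$ this target interval always contains a subinterval of length at least $\delta'/2$ inside $[-\delta',\delta']$, and inside that subinterval \eqref{Bdensity} forces $|B|>(1-\epsilon)\delta'/2$. Summing the two measures and comparing with the length $r_\alpha\leq C\delta'$ yields strict inequality once $\epsilon$ is chosen small depending only on $C$ and $K$, which forces $(c-E_\alpha)\cap B\neq\emptyset$. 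Hence we may take $N(\gamma(\alpha))=(\gamma(\alpha)-\delta'/4,\gamma(\alpha)+\delta'/4)$, of length $\delta'/2$, uniformly for $\alpha\in[0,\delta]$.

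The main obstacle is the vanishing of $F_\alpha'$ at $x=\alpha$, which for $\alpha=0$ coincides with the density point of $A$. This blocks a direct appeal to Proposition~\ref{T50}(i) with $u_1=0$ at $\alpha=0$, and for $\alpha\neq 0$ small it produces a neighborhood whose size degenerates as $\alpha\to 0$, ruining uniformity. The one-sided restriction to $[-\delta,0]$ sidesteps this by replacing local invertibility at $0$ with global monotonicity on the half-interval; the fortunate choice $\delta'=1-\gamma(\delta)$ in the setup of Theorem~\ref{annulus} then makes the image length $r_\alpha$ comparable to the $B$-window in which density is controlled, giving the argument the right scale.
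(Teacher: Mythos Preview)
Your argument is correct, but you have done more than the Corollary asks. In the paper the Corollary is a three-line deduction \emph{from} Lemma~\ref{K1}: once $g_{c,\alpha}(A)\cap B\neq\emptyset$ is known for $c\in N(\gamma(\alpha))$, pick $a\in A$, $b\in B$ with $b=c-\gamma(\alpha-a)$ and observe
\[
(\alpha,c)=(a,b)+\bigl(\alpha-a,\gamma(\alpha-a)\bigr)\in(A\times B)+S^1.
\]
Your opening sentence (rewriting $g_{c,\alpha}(A)\cap B\neq\emptyset$ as $c\in F_\alpha(A)+B$) is exactly this step. Everything after that is a self-contained re-proof of Lemma~\ref{K1}, which the Corollary is allowed to assume.

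That said, your proof of the lemma is valid and runs parallel to the paper's cases~1a/1b. The one structural difference worth noting: the paper pushes $A$ through the \emph{$\alpha$-dependent} interval $J_1^\alpha=[\alpha-\delta,\alpha]$, whose image under $g_{\gamma(\alpha),\alpha}$ has length \emph{exactly} $\delta'=1-\gamma(\delta)$ for every $\alpha$, whereas you use the fixed interval $[-\delta,0]$ and then must argue $r_\alpha=\gamma(\alpha)-\gamma(\alpha+\delta)\asymp\delta'$ via concavity. The paper's choice removes one estimate; yours has the virtue of keeping the $A$-window independent of $\alpha$. Both reach the same Steinhaus-type pigeonhole.

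Two minor inaccuracies, neither fatal: (i) $F_\alpha'$ is not \emph{strictly} positive on $[-\delta,0]$ when $\alpha=0$ (it vanishes at $x=0$), though $F_\alpha$ is still monotone so the image computation stands; (ii) from \eqref{Bdensity} one gets $|B^c\cap[-\delta',\delta']|<2\delta'\epsilon$, hence $|B\cap J|\geq |J|-2\delta'\epsilon$ for any $J\subset[-\delta',\delta']$, not quite the $(1-\epsilon)\delta'/2$ you wrote---but the final inequality $1/2>(2+CK)\epsilon$ still closes for small $\epsilon$.
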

\vskip.125in

To prove the Corollary, we observe that for each $|\alpha|<1$, for each $a\in A$, and for each $b\in B$,
$$\left(\alpha, b\pm \gamma(\alpha-a)\right) \,\, \in \,\, \left( \left(A\times B\right) +S^1 \right) \cap \ell _{\alpha},$$
where $\ell _\alpha:=\left\{(x,y):x=\alpha\right\}$.


We now prove Lemma \ref{K1}.   We consider the case when $\alpha \in [0,\delta]$ (the case when $\alpha \in [-\delta,0 ]$ follows by a similar argument).  The plan is to get a lower bound on
\begin{equation}\label{m4}
\frac{\mathcal{L}^1 \left( J_2^{\alpha}   \cap  g_{c,\alpha}\left(A \cap J_1^{\alpha}\right) \right)}{\mathcal{L}^1\left(  J_2^{\alpha} \right)   },\end{equation}
where $J_1^{\alpha} \subset [-\delta, \delta]$ and $J_2^{\alpha}\subset [-\delta', \delta']$ are small intervals to be decided on below.
\\

We already have by \eqref{Bdensity} that, for any interval $J_2^{\alpha}\subset [-\delta', \delta']$ which contains $0$,
\begin{equation}\label{m5}
\frac{ \mathcal{L}^1\left( J_2^{\alpha} \cap B\right)}{  \mathcal{L}^1\left(  J_2^{\alpha} \right)   } > 1- \epsilon.
\end{equation}

The conclusion of the Lemma will follow provided that the sum of the lower bound on \eqref{m4} and the lower bound on \eqref{m5} is greater than 1.
\\

We consider the following cases separately: \\
case 1a: $\alpha \in [0,\delta]$ and $c=\gamma(\alpha)$; \\
case 1b: $\alpha \in [0,\delta]$ and $c=\gamma(\alpha)+e$ for each $|e| <\frac{\delta^2}{4}$; and\\
case 2: $\delta \le  \alpha <1$ and $c=\gamma(\alpha)+e$ for each $|e| <\frac{\delta^2}{4}$.
\\

\textbf{case 1a:  $\alpha \in [0,\delta]$, $c=\gamma(\alpha)$:}\\
Let $\alpha \in [0,\delta]$, set $c=\gamma(\alpha)$, and set
 $$J_1^{\alpha} = [-\delta +\alpha, \alpha] \subset [-\delta, \delta].$$
Now
$$g_{\gamma(\alpha),\alpha}(x) = \gamma(\alpha) - \gamma(\alpha-x)$$
 is strictly decreasing on $J_1^{\alpha}$.
 Note that $g_{\gamma(\alpha),\alpha}(0)=0$ and so the graph of $g_{\gamma(\alpha),\alpha}$ passes through zero.
 \\

Set
$$J_2^{\alpha}\,\,=\,\, g_{\gamma(\alpha),\alpha}\left(J_1^{\alpha}\right)\,\,= \,\, [\gamma(\alpha)-1, \gamma(\alpha)- \gamma(\delta)]
\,\, \subset \,\, [ \gamma(\delta)-1, 1-\gamma(\delta)].$$
\vskip.125in

Since the length of $J_1^{\alpha} $ is $\delta$ and $J_1^{\alpha}  \subset [-\delta, \delta]$,  it follows by \eqref{Adensity} that $\mathcal{L}^1\left(A^c\cap J_1^{\alpha}\right) < \delta\cdot \epsilon$.  Also, $\left| g_{\gamma(\alpha),\alpha}'(x)\right|< 3\delta $ for  $x\in [-\delta, \delta]$.  Thus
\begin{equation}\label{m1}\mathcal{L}^1\left( g_{\gamma(\alpha),\alpha}\left(A^c\cap J_1^{\alpha}\right)\right) = \left| \int_{A^c\cap J_1^{\alpha}} g_{\gamma(\alpha),\alpha}'(x) dx  \right|
\, \le \, 3\epsilon \delta^2.\end{equation}
Next, we use Taylor's theorem to obtain a uniform lower bound on the length of $J_2^{\alpha}$.

\begin{equation}\label{m2}\mathcal{L}^1\left(   J_2^{\alpha} \right) = 1-\gamma(\delta) \geq \frac{1}{2}\delta^2.\end{equation}

Now, we can write
\begin{equation}\label{m3}J_2^{\alpha} = g_{\gamma(\alpha),\alpha}\left(J_1^{\alpha}\right)
= g_{\gamma(\alpha),\alpha}\left(A\cap J_1^{\alpha}\right) + g_{\gamma(\alpha),\alpha}\left(A^c\cap J_1^{\alpha}\right).\end{equation}
\\

Putting \eqref{m1}, \eqref{m2}, and \eqref{m3} together, we see that
$$\frac{\mathcal{L}^1\left(  g_{\gamma(\alpha),\alpha}(A\cap J_1^{\alpha}) \right)}{ \mathcal{L}^1\left(  J_2^{\alpha} \right)   }
>  1-  6\epsilon.$$
Combining this and \eqref{m5}
we obtain that \eqref{z97} holds.

\textbf{case 1b: $\alpha \in [0,\delta]$, $c=\gamma(\alpha)+e$, for each $|e| <\frac{\delta^2}{4}$:}\\
Next, if $c=\gamma(\alpha)+e$, for some $|e|<\frac{\delta^2 }{4}$, then $$g_{c,\alpha}\left(J_1^{\alpha}\right)= J_2^{\alpha}+e.$$   It follows that
\begin{align*}
J_2^{\alpha}
&=  \left( J_2^{\alpha}  \cap  g_{c,\alpha}\left(J_1^{\alpha}\right)  \right) \,\bigcup \,\left( J_2^{\alpha} \backslash \left( J_2^{\alpha}+e \right)   \right)\\
&= \left(   J_2^{\alpha}   \cap g_{c,\alpha}\left(A \cap J_1^{\alpha}\right) \right)
\, \bigcup \,\left(  J_2^{\alpha}   \cap  g_{c,\alpha}\left(A^c \cap J_1^{\alpha}\right) \right)
 \,\bigcup \,\left( J_2^{\alpha} \backslash \left( J_2^{\alpha}+e \right)   \right).
 \end{align*}
 \vskip.125in

Putting this together with \eqref{m1} and \eqref{m2}, we see that
\begin{equation}\label{mappedAdensity}
\frac{\mathcal{L}^1 \left( J_2^{\alpha}   \cap  g_{c,\alpha}\left(A \cap J_1^{\alpha}\right) \right)}{\mathcal{L}^1\left(  J_2^{\alpha} \right)   }
>  1-  6\epsilon- \frac{e}{\frac{1}{2}\delta^2}.\end{equation}
We combine this with the observation that, by \eqref{Bdensity},  $$\frac{ \mathcal{L}^1\left( J_2^{\alpha} \cap B \right)}{  \mathcal{L}^1\left(  J_2^{\alpha} \right)   } > 1- \epsilon.$$
Thus, $g_{c,\alpha}\left(A\right) \cap B \neq \emptyset$  holds for $\alpha \in [0,\delta]$ and $c=\gamma(\alpha)+e$ provided that
$\left| e \right|  < \frac{\delta^2(1-7\epsilon)}{2}$.
Set $e_{0}= \frac{\delta^2(1-7\epsilon)}{2}$ and $$N(\gamma(\alpha) ) = \left(\gamma(\alpha) -e_0, \gamma(\alpha) +e_0\right)$$ to complete the proof of the Lemma \ref{K1} in case 1b.
\\

\begin{corollary}\label{EWpoles}
For $\alpha \in [-\delta,\delta]$, a simple modification of the proof of the Lemma \ref{K1} (mainly, reversing the roles of the sets $A$ and $B$ )  implies that the set
$$\left\{(c, \alpha) : c\in N(\pm\gamma(\alpha)) \right\}\subset \left(A\times B \right)+S^1.$$
Moreover,
replacing $g_{c,\alpha}$ by $h_{c,\alpha}$ and reversing the roles of $A$ and $B$ in the proof of Lemma \eqref{K1} yields that $\left\{(c, \alpha) : c\in N(-\gamma(\alpha)) \right\}\subset \left(A\times B \right)+S^1$
\end{corollary}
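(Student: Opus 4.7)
The plan is to exploit the symmetry between $A$ and $B$ in the hypotheses of the theorem (both have $0$ as a Lebesgue density point) together with the symmetry of $S^1$ under the coordinate swap $(x,y)\mapsto(y,x)$. In the proof of Lemma~\ref{K1} we fixed the first coordinate to be $\alpha$ (close to $0$) and solved for the second coordinate $c$ (close to $\pm 1$), producing neighborhoods of the north/south arcs of $S^1$. To reach the east/west arcs we must instead fix the second coordinate to be $\alpha$ (close to $0$) and solve for the first coordinate $c$ (close to $\pm 1$), which is precisely what ``reversing the roles of $A$ and $B$'' amounts to at the level of the parametrization.

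The concrete steps are as follows. First, define the coordinate-swapped analogues
\[
\tilde g_{c,\alpha}(y) := c - \gamma(\alpha - y), \qquad \tilde h_{c,\alpha}(y) := c + \gamma(\alpha - y),
\]
now viewed as functions of a variable $y$ to be selected in $B$, and observe that if $b\in B$ satisfies $\tilde g_{c,\alpha}(b)\in A$, then
\[
(c,\alpha) = \bigl(\tilde g_{c,\alpha}(b),\, b\bigr) + \bigl(\gamma(\alpha-b),\, \alpha-b\bigr) \in (A\times B)+S^1,
\]
since $\gamma(\alpha-b)^{2}+(\alpha-b)^{2}=1$. Thus our task reduces to showing $\tilde g_{c,\alpha}(B)\cap A\neq\emptyset$. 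Second, transcribe the argument of Lemma~\ref{K1} verbatim with the roles of $A$ and $B$ exchanged: use \eqref{Bdensity} to select an interval $\widetilde J_1^{\alpha}\subset[-\delta,\delta]$ in the $B$-variable on which $B$ has density $>1-\epsilon$; the bound $|\tilde g'_{c,\alpha}|<3\delta$ gives $\mathcal{L}^1(\tilde g_{c,\alpha}(B^c\cap \widetilde J_1^{\alpha}))\le 3\epsilon\delta^{2}$, while Taylor expansion gives $\mathcal{L}^1(\tilde g_{c,\alpha}(\widetilde J_1^{\alpha}))\gtrsim \delta^{2}/2$. Hence $\tilde g_{c,\alpha}(B\cap \widetilde J_1^{\alpha})$ occupies a large relative fraction of its containing interval $\widetilde J_2^{\alpha}$, which by the density hypothesis \eqref{Adensity} of $A$ at $0$ must meet $A$. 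Third, repeat the perturbation argument of case 1b to show the conclusion is stable under $c\mapsto c+e$ for $|e|<\delta^{2}(1-7\epsilon)/2$, producing a neighborhood $N(\gamma(\alpha))$ of $\gamma(\alpha)$ whose length is uniformly bounded below as $\alpha$ varies in $[-\delta,\delta]$.

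For the western half, replace $\tilde g_{c,\alpha}$ by $\tilde h_{c,\alpha}$, so that the corresponding identity $(c,\alpha)=(\tilde h_{c,\alpha}(b),b)+(-\gamma(\alpha-b),\alpha-b)$ again places the point in $(A\times B)+S^1$; the same calculation, now centered at $-\gamma(\alpha)$, yields the neighborhood $N(-\gamma(\alpha))$. No genuinely new obstacle arises: the density hypotheses on $A$ and $B$ are symmetric, the derivative estimates on $\tilde g,\tilde h$ match those on $g,h$, and the geometry simply reparameterizes the east/west arcs of $S^1$ by their $y$-coordinate in place of the north/south arcs by their $x$-coordinate. The only mild bookkeeping is to confirm that the interval $\widetilde J_1^{\alpha}$ in the $B$-variable can be chosen inside $[-\delta,\delta]$ for each $\alpha\in[-\delta,\delta]$, which follows exactly as in the proof of Lemma~\ref{K1}.
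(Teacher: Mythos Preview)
Your proposal is correct and follows exactly the approach the paper indicates: the paper gives no separate proof of Corollary~\ref{EWpoles} beyond the parenthetical instruction to reverse the roles of $A$ and $B$ (and replace $g_{c,\alpha}$ by $h_{c,\alpha}$ for the opposite pole), and you have carried this out faithfully, including the key observation that $\tilde g_{c,\alpha}(b)\in A$ forces $(c,\alpha)\in(A\times B)+S^1$ via the decomposition $(c,\alpha)=(\tilde g_{c,\alpha}(b),b)+(\gamma(\alpha-b),\alpha-b)$.
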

\vskip.125in

Gathering the results of Corollary's \ref{NSpoles} and \ref{EWpoles}, we conclude that $\left(A\times B \right)+S^1$ contains an open ball about each of the poles of $S^1$: $(1,0), (0,1), (-1,0), (0,-1)$.
\\

\textbf{case 2: $\delta \le  \alpha <1$, for each $|e| <\frac{\delta^2}{4}$:}\\
The previous case relied on an obtaining an upper bound on $\mathcal{L}^1\left( g_{\gamma(\alpha),\alpha}\left(A^c\cap J_1^{\alpha}\right)\right)$ (see \eqref{m1} above)
and a lower bound on $\mathcal{L}^1\left(   J_2^{\alpha} \right) $ (see \eqref{m2} above).
To handle $\alpha $ away from zero, $\delta \le \alpha < 1$, we perform a similar argument where the mean value theorem takes the role of Taylor's theorem above.
 Here are the details:

For $\delta \le \alpha \le 1$, set $J_1=\left[0, \frac{\delta}{2}\right]$ and
$J_2^{\alpha}= \left[ \gamma(\alpha)-\gamma(\alpha-\delta/2 ), 0\right] \subset \left[ \gamma(1)-\gamma(1-\delta/2 ), 0\right]$.

The upper bound in \eqref{m1} still holds when $J_1=\left[0, \frac{\delta}{2}\right]$.  Next, we have by the mean value theorem that there exists $\widehat{\alpha} \in \left(  \alpha-\frac{\delta}{2}, \alpha\right)$ so that
\begin{equation}\mathcal{L}^1\left(   J_2^{\alpha} \right)
=\gamma\left(\alpha-\frac{\delta}{2} \right)  -  \gamma(\alpha) = -\frac{\delta}{2} \cdot \gamma'(\widehat{\alpha})
.\end{equation}
Since the derivative $\gamma'$ is strictly increasing on $(0,1)$ and $ \widehat{\alpha} \in \left(  \alpha-\frac{\delta}{2}, \alpha\right)\subset \left(\frac{\delta}{2},1\right)$, then
\begin{equation}
\mathcal{L}^1\left(   J_2^{\alpha} \right)
>-\frac{\delta}{2} \cdot \gamma'\left( \frac{\delta}{2} \right) \sim \delta^2.
\end{equation}
The proof proceeds as before.
\end{proof}
\subsection{Proof of Lemma \ref{g99}}\label{g57}

 \begin{proof}[Proof of Lemma \ref{g99}] We may assume that $A$ is not a singleton and that $A$ is connected (that is we cannot find $G_1,G_2$ open sets such that $G_i\cap A\ne\emptyset$, $i=1,2$ and $A=(G_1\cap A)\cup(G_2\cap A)$). Otherwise we change to one of its connected component which is not a singleton.
    Let
    $$
    \widehat{A}:
    =
    \left\{
    x\in\mathbb{R}^2:
    \exists a_1,a_2\in A: \|x-a_1\|<1, \|x-a_2\|>1
    \right\}.
    $$
    It is immediate form the definition that $\widehat{A}$ is open and
    using $\# A \ne 1$ we obtain that $\widehat{A}\ne\emptyset$.
  Hence, to verify the Lemma, it is enough to check that
  \begin{equation}\label{094}
   \widehat{A}\subset A+S^1.
  \end{equation}
  To get contradiction, assume that there exists
$x\in\widehat{A}$ such that $x\not\in A+S^1$. Then $ \forall a\in A,\quad
  a\not\in x+S^1$. That is
\begin{equation}\label{093}
 A\cap (x+S)^1=\emptyset.
\end{equation}
  Let
  $$
  G_1:=\left\{y\in\mathbb{R}^2:
  \|y-x\|<1
  \right\},\quad
  G_2:=\left\{y\in\mathbb{R}^2:
  \|y-x\|>1
  \right\}.
  $$
   Clearly, $G_1,G_2$ are open, and it follows from $x\in\widehat{A}$ that
   $G_1\bigcap A \neq \emptyset,G_2\bigcap A\ne\emptyset$. But by \eqref{093} we have
   $$
   A=(G_1\cap A)\cup(G_2\cap A)
   $$
   which contradicts to the assumption that $A$ is connected. This shows that \eqref{094} holds and the fact that $\widehat{A}$ is a non-empty open set yields that
   \eqref{095} holds.
  \end{proof}



 \section{Proof Theorem \ref{g61}}\label{g58'}
Let $t=(t_1, t_2) \in C(1/3):= C_{1/3}\times C_{1/3}.$
We show that $\Delta_{t}(C(1/3))$ contains an interval.
Let $v>0$, and define
$$g_v(x)=t_2+  \sqrt{v^2- (x-t_1)^2}.$$
Observe that if
$$g_v(x)=y \text{\,\,\, for some\,\,\,}  x,y\in C_{1/3},$$
then $v\in \Delta_{(t_1,t_2)}\left(C(1/3)\right).$
\\

We verify that there exists a non-empty open interval $I$, in the domain of $g_v$, with $I\cap C_{1/3}\ne \emptyset $ and a non-empty open interval $V$ such that
\begin{equation}\label{g55}
g_v(I\cap C_{1/3})\cap C_{1/3}\ne \emptyset, \quad \forall v\in V.
\end{equation}
\\

The idea behind the proof of \eqref{g55} is to use a modification of the proof of the Newhouse gap lemma which is presented in Palis and Takens \cite{PT00}.  Given a set $K$, we refer to the connected components of the compliment of $K$ as gaps. The \textit{Newhouse gap lemma} states that two Cantor sets  $K$ and $L$ intersect one another provided that the product of their ``thicknesses'' (see Section \ref{N100})  is greater than one, $K$ is not contained in a gap of $L$, and $L$ is not contained in a gap of $K$.
\\

Since the thickness of $C_{1/3}$ is one and the thickness of $g_v(C)$ is smaller than one,  Newhouse gap  Lemma cannot be applied directly but we find inspiration in its proof.
The aim now is to construct the intervals $I$ and $V$.
\\

By symmetry of the middle-third Cantor set, we need only show that $\Delta_{t}(C(1/3))$ contains an interval for
$t= (t_1,t_2)\in \widetilde{C} \times \widetilde{C}$, where $\widetilde{C}:=    \left[0,\frac{1}{3}\right]\cap C(1/3).$
Fix such a $(t_1,t_2)$ throughout.
\\

Define the set of left and right gap endpoints of $C_{1/3}$:
\begin{equation}\label{goodleft} C_L=\{x\in C_{1/3}:  x \text{ is a left-end-point of a finite gap of } C_{1/3}\},\end{equation}
\begin{equation}\label{goodright} C_R=\{x\in C_{1/3}:  x \text{ is a right-end-point of a finite gap of } C_{1/3}\} .\end{equation}
Clearly, $C_L$ and $C_R$ are dense in $C_{1/3}$.

\begin{lemma}\label{goodpoint}
For sufficiently small $\,\epsilon>0$,
there exists $(u_1, u_2) \in C_{1/3}\times C_{1/3}$ with $u_1>t_1$ and $u_2>t_2$ satisfying:\\
(i) $u_1 \in C_L$ and $u_2 \in C_R$, and \\
(ii) $\left|g_{v_0}'(u_1) \right|\in [1+2\epsilon, 3-2\epsilon]$, where $v_0=\sqrt{ (u_1-t_1)^2 + (u_2-t_2)^2}$\\
(iii) $g'_{v_0}(u_1)<0$.
\end{lemma}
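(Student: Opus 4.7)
The first step is to translate conditions (ii) and (iii) into statements about the raw parameters $u_1,u_2,t_1,t_2$. Differentiating $g_v(x) = t_2 + \sqrt{v^2 - (x-t_1)^2}$ gives $g_v'(x) = -(x-t_1)/\sqrt{v^2 - (x-t_1)^2}$, and substituting $v = v_0$, so that $\sqrt{v_0^2 - (u_1-t_1)^2} = u_2 - t_2 > 0$, yields
\[
g_{v_0}'(u_1) \;=\; -\,\frac{u_1 - t_1}{u_2 - t_2}.
\]
Once $u_1 > t_1$ and $u_2 > t_2$, condition (iii) is automatic, and condition (ii) becomes $(u_1-t_1)/(u_2-t_2) \in [1+2\epsilon,\,3-2\epsilon]$. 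Since $\epsilon$ is at our disposal, it suffices to exhibit $(u_1,u_2) \in C_L \times C_R$ with $u_i > t_i$ and the ratio strictly in the open interval $(1,3)$.

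The second step replaces the $C_L \times C_R$ requirement by a weaker one on $C_{1/3} \times C_{1/3}$, via density. Every point of $C_{1/3}$ is an accumulation point of $C_L$ (and of $C_R$), so if $I_1 \times I_2 \subset (t_1, 1] \times (t_2, 1]$ is an open rectangle with $I_i \cap C_{1/3} \neq \emptyset$ on which the ratio function $(x-t_1)/(y-t_2)$ takes values in a compact subinterval of $(1,3)$, then both $I_1 \cap C_L$ and $I_2 \cap C_R$ are non-empty and any pair drawn from them works.

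The third step constructs such a rectangle, and here I would exploit the self-similarity $C_{1/3} = \tfrac{1}{3} C_{1/3} \cup \bigl(\tfrac{2}{3} + \tfrac{1}{3} C_{1/3}\bigr)$. Because $t_1, t_2 \in [0, 1/3] \cap C_{1/3}$, anything in the right scaled copy $\tfrac{2}{3} + \tfrac{1}{3} C_{1/3} \subset [2/3, 1]$ is automatically $> t_1$ and $> t_2$, and the differences $u_i - t_i$ lie in $[1/3, 1]$. Zooming one or more levels deeper within this right copy, i.e.\ choosing $u_i = \tfrac{2}{3} + \tfrac{1}{3} w_i$ with $w_i \in C_{1/3}$ selected to tune $(u_1-t_1)/(u_2-t_2)$ close to $2$, furnishes the required rectangle; the targeted ratio $2$ lies safely in the interior of $(1,3)$.

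The main obstacle is making this construction uniform in $(t_1, t_2)$: a naive fixed choice such as $u_1 = 7/9$, $u_2 = 2/3$ can push the ratio outside $(1, 3)$ when $t_1$ is close to $1/3$ and $t_2$ is close to $0$. I would handle this either by a short case split based on the ternary ``address'' of $(t_1, t_2)$ inside the Cantor square, or by working at a sufficiently deep scale so that the play inside $I_1 \times I_2$ swamps the sensitivity to the position of $(t_1, t_2)$. Once $(u_1, u_2)$ has been produced with $\rho := (u_1-t_1)/(u_2-t_2) \in (1, 3)$, any $\epsilon < \tfrac{1}{2}\min(\rho-1,\,3-\rho)$ places $|g_{v_0}'(u_1)| = \rho$ inside $[1+2\epsilon,\,3-2\epsilon]$, completing the proof.
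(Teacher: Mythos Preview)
Your argument is correct and follows essentially the same route as the paper: both compute $g_{v_0}'(u_1) = -(u_1-t_1)/(u_2-t_2)$, reduce to finding $(u_1,u_2)\in C_L\times C_R$ with $u_i>t_i$ and ratio $(u_1-t_1)/(u_2-t_2)$ strictly inside $(1,3)$, and then invoke density of $C_L,C_R$ in $C_{1/3}$ once a suitable open region has been located.

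The only real difference is in how the geometric step is packaged. The paper phrases it as a cone argument: the set of half-lines from $(t_1,t_2)$ with slope in $[\tfrac{1}{3}+2\epsilon,\,1-2\epsilon]$ must, for small enough $\epsilon$, contain an open neighbourhood of some point of $C_{1/3}\times C_{1/3}$; this is asserted as ``elementary geometry'' without further detail. You instead try to build the point explicitly via self-similarity and correctly notice that no single fixed choice of $(u_1,u_2)$ works for all $(t_1,t_2)\in [0,1/3]^2$. Note, however, that no uniformity is actually required here: $(t_1,t_2)$ is fixed at the outset of the section, so a $(t_1,t_2)$-dependent choice is perfectly acceptable, and either of your suggested remedies (a short case split on the ternary address, or passing to a deep enough cylinder) completes the argument. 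The cone formulation has the advantage of sidestepping this case analysis in one stroke, at the cost of leaving the verification implicit.
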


  \begin{proof} [Proof of Lemma \ref{goodpoint}]
Let $\mathcal{R}^{\varepsilon}_{(t_1, t_2)}$ be the union of the  set of half-lines starting
$(t_1,t_2)$ with slope contained $\left[\frac{1}{3}+2\epsilon, \,\,1-2\epsilon\right]$ for a small $\varepsilon>0$.

It follows from elementary geometry that we can find an $\varepsilon>0$ so that the set $\mathcal{R}^{\varepsilon}_{(t_1, t_2)}$  contains an open neighborhood of a point in $C_{1/3}\times C_{1/3}$.
Using that both $C_L$ and $C_R$ are dense in $C_{1/3}$, we obtain that
\begin{equation}\label{g54}
\exists \,\,(u_1,u_2)\in C_L\times C_R \mbox{ such that }\
t_1<u_1, t_2<u_2,\
\frac{u_1-t_1}{u_2-t_2}\in \left[1+2\epsilon, \,\,3-2\epsilon\right].
\end{equation}
Let
\begin{equation}\label{g53}
  v_0:=\sqrt[]{(u_1-t_1)^2+(u_2-t_2)^2}.
\end{equation}
It is straight forward to verify that $g'_{v_0}(u_1)<0$ and
\begin{equation}\label{k96}
  |g'_{v_0}(u_1)| \,\, = \,\,\frac{|u_1-t_1|}{|u_2-t_2|} \,\, \in \,\, \left[1+2\epsilon, \,\,3-2\epsilon\right].
\end{equation}
\end{proof}
For the rest of the proof of Theorem \ref{g61}, fix $\varepsilon>0$, the pair $(u_1,u_2)\in C_L\times C_R$, and the corresponding $v_0$ from Lemma \ref{goodpoint}.

\begin{lemma}\label{K97}
There exists $\delta>0$ so that \newline
(i) $$
  \left|g_{v}'(x) \right|\in [1+\epsilon, 3-\epsilon] \mbox{\,\, if \,\,}
  |x-u_1|<\delta \mbox{ and }|v-v_0|<\delta.
$$
 Moreover, there exists $\delta'>0$ so that if $v_0 \le v < v_0 +\delta'$, then
  \newline
(ii)  $g_{v}([u_1-\delta,u_1]\cap C_{1/3})$ is not contained in a gap of $C_{1/3}$ and $C_{1/3}$ is not contained in a gap of\  $\ g_{v}([u_1-\delta,u_1]\cap C_{1/3})$.

\end{lemma}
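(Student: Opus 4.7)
The plan is to handle (i) by continuity and then reduce (ii) to a convex-hull argument that exploits $u_1 \in C_L$, $u_2 \in C_R$.

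For part (i), I would simply note that $g_v(x) = t_2 + \sqrt{v^2 - (x-t_1)^2}$ is $C^\infty$ on the open set where $v^2 > (x-t_1)^2$, and this inequality is strict at $(v_0,u_1)$ since $v_0^2 - (u_1-t_1)^2 = (u_2-t_2)^2 > 0$. The derivative $g_v'(x) = -(x-t_1)/\sqrt{v^2-(x-t_1)^2}$ is jointly continuous there. Since $|g_{v_0}'(u_1)| \in [1+2\varepsilon, 3-2\varepsilon]$ lies strictly inside $(1+\varepsilon, 3-\varepsilon)$, a uniform continuity argument on a small compact box around $(v_0,u_1)$ yields a $\delta>0$ as required; shrinking $\delta$ further preserves the sign $g_v'(x) < 0$, so each $g_v$ is strictly decreasing on $[u_1-\delta, u_1]$.

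For part (ii), fix $\delta$ from (i), shrunk further to satisfy $(3-\varepsilon)\delta < 1$. Write $m_v := g_v(u_1)$ and $M_v := g_v(u_1-\delta)$; since $g_v$ is decreasing, these are the minimum and maximum of $g_v([u_1-\delta,u_1] \cap C_{1/3})$, and by (i) one has $M_v - m_v \le (3-\varepsilon)\delta < 1$. The second claim in (ii) is then immediate: every bounded complementary interval of $g_v([u_1-\delta, u_1] \cap C_{1/3})$ has length at most $M_v - m_v < 1 = \mathrm{diam}(C_{1/3})$, and the two unbounded tails are ruled out by checking that $m_v, M_v \in [0,1]$ for small perturbations. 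So $C_{1/3}$ cannot fit inside a gap of the image.

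For the first claim in (ii), the key observation is that $u_2 \in C_R$ forces $u_2$ to be isolated from the left in $C_{1/3}$ (there is a gap immediately to its left); since $C_{1/3}$ is perfect, $u_2$ must therefore be an accumulation point from the right. Consequently, for every $\eta > 0$, $(u_2, u_2+\eta) \cap C_{1/3} \neq \emptyset$. In particular, since $g_{v_0}(u_1-\delta) > g_{v_0}(u_1) = u_2$, I can pick a point $p \in C_{1/3} \cap (u_2, g_{v_0}(u_1-\delta))$. Using that $\partial_v g_v(x) = v/\sqrt{v^2-(x-t_1)^2} > 0$, the function $v \mapsto M_v$ is nondecreasing, so $M_v \ge g_{v_0}(u_1-\delta) > p$ for all $v \ge v_0$. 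By continuity of $v \mapsto m_v$ at $v_0$ (with $m_{v_0} = u_2 < p$), choose $\delta' > 0$ so small that $m_v < p$ for $v_0 \le v < v_0 + \delta'$. Then $p \in [m_v, M_v] \cap C_{1/3}$. If the image were contained in some gap $(a,b)$ of $C_{1/3}$, its convex hull $[m_v, M_v]$ would also lie in $(a,b)$, forcing $p \in (a,b) \cap C_{1/3} = \emptyset$, a contradiction.

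The main subtlety is controlling the leftmost point $m_v$: as $v$ moves past $v_0$, $m_v$ leaves $C_{1/3}$ and can drift into one of the many small gaps of $C_{1/3}$ emanating from $u_2$ on the right. The trick is that we do not need $m_v$ itself to land in $C_{1/3}$; we only need the convex hull $[m_v, M_v]$ to straddle some fixed Cantor point $p$ slightly above $u_2$, which is guaranteed by taking $\delta'$ much smaller than the fixed quantity $p - u_2$. This is precisely where the hypotheses $u_1 \in C_L$ and $u_2 \in C_R$, together with the rigid derivative bounds of (i), combine to make the argument work uniformly in a one-sided neighborhood of $v_0$.
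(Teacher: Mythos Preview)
Your argument is essentially correct and close in spirit to the paper's, but there is one oversight worth flagging. You assert that $M_v := g_v(u_1-\delta)$ is the maximum of $g_v([u_1-\delta,u_1]\cap C_{1/3})$, which presupposes $u_1-\delta\in C_{1/3}$. Nothing in your choice of $\delta$ guarantees this; if $u_1-\delta$ lands in a gap of $C_{1/3}$, the true maximum of the image is strictly smaller than $M_v$, and the point $p\in(u_2,g_{v_0}(u_1-\delta))$ you select may then lie to the right of the actual convex hull of the image, so the gap-contradiction does not go through. The fix is immediate: either shrink $\delta$ to the form $3^{-N}$ so that $[u_1-3^{-N},u_1]$ is a level-$N$ cylinder interval (this is exactly where $u_1\in C_L$ is used), or else replace $M_v$ throughout by $g_v(x_{\min})$ with $x_{\min}:=\min\bigl([u_1-\delta,u_1]\cap C_{1/3}\bigr)$ and choose $p\in(u_2,g_{v_0}(x_{\min}))$.

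The paper takes the first route: it sets $\delta=3^{-N}$ from the start, so that both $[u_1-\delta,u_1]$ and $[u_2,u_2+\delta]$ are cylinder intervals of $C_{1/3}$ (here $u_1\in C_L$ and $u_2\in C_R$ are used symmetrically). Using the lower bound $|g_v'|>1$ it then shows $u_2<g_v(u_1)<u_2+\delta<g_v(u_1-\delta)$, so the image convex hull straddles the specific Cantor point $u_2+\delta$. Your version replaces this explicit cylinder bookkeeping with a softer accumulation/continuity argument (pick any $p\in C_{1/3}$ just above $u_2$, then use monotonicity in $v$); once the oversight above is corrected, the two arguments are equivalent.
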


\begin{proof}[Proof of Lemma \ref{K97}]
Item (i) is an immediate consequence of the continuity of the derivative of the function $(u,v)\mapsto g_v(x)$ at the point $(u_1,v_0)$.
Item (ii) holds when $v=v_0$ because $u_2=g_{v_0}(u_1)$.
Now we consider the case when $v\in (v_0,v_0+\delta)$.

If $v>v_0$, then $ g_{v_0}(u_1) < g_{v}(u_1)$.  Choose $v>v_0$ small enough that
\begin{equation}\label{blah}  g_{v_0}(u_1) < g_{v}(u_1) < g_{v_0}(u_1) + \delta.\end{equation}
In particular, choose $0<\delta' \le \delta$ so that if $v_0 \le v < v_0 +\delta'$, then \eqref{blah} holds.
Combining this with the observation that, by the lower bound on the derivative from (i),
$g_v(u_1-\delta) - g_v(u_1) >\delta$,  we have
$$u_2 = g_{v_0}(u_1) < g_{v}(u_1) < u_2 + \delta<  g_{v}(u_1- \delta).$$

We now argue that $\delta>0$ can be chosen apriori in such a way that both $u_1- \delta$ and $u_2+\delta$ are in $C_{1/3}$  by setting $\delta = \frac{1}{3^N}$ for $N$ sufficiently large.
The choice $(u_1,u_2)\in C_L\times C_R$ implies that both
$\left[u_1-\frac{1}{3^N},u_1\right]$ and
$\left[u_2,u_2+\frac{1}{3^{N}}\right]$ are cylinder intervals of $C_{1/3}$ if $N$ is sufficiently large.

\end{proof}

We fix $\delta, \delta'>0$ which satisfy Lemma \ref{K97} and
we define:
\begin{equation}\label{g50}
  I:=\left(u_1-\delta,u_1\right),
  V:=(v,v+\delta'), \mbox{ \,\,\,and }\quad C:=I\cap C_{1/3}.
\end{equation}

Further we fix an arbitrary $v\in V$ and we define
$$
g:C\to \mathbb{R},\qquad  g(x):=g_v(x).
$$
As we discussed above (see \eqref{g55}), to prove the theorem  it is enough to prove
that
\begin{equation}\label{g49}
  g(C)\cap C_{1/3}\ne \emptyset.
\end{equation}
\begin{definition}\label{g56}

\begin{description}
  \item[(a)] Let $U$ be a gap of $C_{1/3}$.
  We define the \textit{right-bridge}  of $U$ as the minimal distance between $U_r$ and the left-end-point of another gap $U'$, so that $U'_l>U_r$ and $|U'|\geq |U|$.  The \textit{left-bridge}  of $U$ is defined analogously.
  \item[(b)] Let $U$ be a bounded gap of $C_{1/3}$ and $U'$ be a bounded gap of $C$.
 We call $\left(U, g(U')\right)$ a \textit{gap pair}
 if $U$ contains exactly one end-point of $g(U')$, and $g(U')$ contains exactly one end-point of $U$.
\end{description}

\end{definition}

It is an easy exercise to check that the following Fact holds:

\begin{fact}\label{g48}
  The bridges of $g(C)$ are the $g$-images of the bridges of the Cantor set $C$.
\end{fact}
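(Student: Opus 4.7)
The plan is to combine the derivative bound $|g'(x)|\in[1+\epsilon,\,3-\epsilon]$ on $I$ from Lemma \ref{K97}(i) with the rigid scale structure of $C_{1/3}$. Since $g$ is a strictly decreasing $C^1$ diffeomorphism on $I$, it is a homeomorphism carrying $C$ onto $g(C)$ and inducing an order-reversing bijection between the bounded gaps of $C$ and the bounded gaps of $g(C)$: each bounded gap $U$ of $C$ corresponds to the bounded gap $g(U)$ of $g(C)$, with right- and left-endpoints swapped. By the Mean Value Theorem,
\begin{equation*}
\frac{|g(U)|}{|U|}\;\in\;[\,1+\epsilon,\;3-\epsilon\,]
\end{equation*}
for every such $U$, so the dilation ratio $|g(U_1)|/|g(U_2)|$ between any two bounded gaps of $C$ lies strictly inside the interval $(1/3,\,3)$.

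Next I would exploit the discrete gap spectrum of $C\subset C_{1/3}$. Because $I$ was chosen in Lemma \ref{K97} to be a cylinder interval of $C_{1/3}$, every bounded gap of $C$ is a bounded gap of $C_{1/3}$ and therefore has length $3^{-n}$ for some $n\geq 1$. It is an elementary consequence of the construction of $C_{1/3}$ that if $U$ is a bounded gap of $C$ and $U'$ is the nearest bounded gap to its right with $|U'|\geq|U|$, then in fact $|U'|\geq 3|U|$: inside the level-$(n-1)$ interval enclosing $U$, every remaining gap has length at most $|U|/3$, so the right-bridge can terminate only after crossing into a coarser hierarchical level. The symmetric statement holds for left-bridges.

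Combining these ingredients, fix a bounded gap $U$ of $C$ together with the gap $U'$ that terminates its right-bridge in $C$. Every intermediate gap $W$ satisfies $|W|\leq|U|/3$, whence
\begin{equation*}
|g(W)|\;\leq\;(3-\epsilon)\,|U|/3\;<\;(1+\epsilon)|U|\;\leq\;|g(U)|,
\end{equation*}
so no $g(W)$ qualifies as a bridge endpoint for $g(U)$ in $g(C)$. Meanwhile,
\begin{equation*}
|g(U')|\;\geq\;(1+\epsilon)\cdot 3|U|\;>\;(3-\epsilon)|U|\;\geq\;|g(U)|,
\end{equation*}
so $g(U')$ does qualify. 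Since $g$ reverses orientation, $g(U')$ lies to the left of $g(U)$ in $g(C)$, and hence $g$ maps the right-bridge of $U$ in $C$ bijectively onto the left-bridge of $g(U)$ in $g(C)$. The symmetric argument for left-bridges together with the exhaustiveness of the gap-bijection identifies the collection of bridges of $g(C)$ with the $g$-images of the bridges of $C$.

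The only nontrivial step is the elementary structural observation that right-bridges in $C_{1/3}$ always terminate at strictly larger gaps (bigger by a factor of at least $3$); once this is in hand, the derivative bound strictly below $3$ keeps the size-ordering of qualifying gaps intact and rules out every intermediate candidate, so no ambiguity arises from equal-sized gaps being reordered by $g$.
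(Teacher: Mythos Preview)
Your argument is correct. The paper does not actually prove this Fact; it merely asserts that it is ``an easy exercise,'' so there is no proof in the paper to compare against. Your proof supplies exactly the missing details, and the mechanism you isolate---that consecutive hierarchical gap sizes in $C_{1/3}$ differ by a factor of at least $3$ while $|g'|$ is trapped strictly inside $(1,3)$, so the size comparisons defining bridges survive the distortion by $g$---is precisely the point. This is the same dichotomy the paper exploits in the proof of Lemma~\ref{PTsub} (cases (i) and (ii) there), so your approach is fully in line with the paper's methods. Your handling of the orientation reversal (right-bridges of $C$ becoming left-bridges of $g(C)$) is also correct and worth noting, since $g$ is strictly decreasing on $I$.
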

The following plays a central role in the proof of our theorem:
\begin{lemma}\label{PTsub}
Let $\left(U, g(U')\right)$ be an arbitrary gap pair. The right-end-point of $U$, is denoted by $U_r$ an d the left-end-point of $U$, denoted by $U_l$.
 Let $B_r$ denote the right-bridge of $U$ and $B_l$ denote the left-bridge.
Let $B_r^g$ denote the right-bridge of $g(U')$ and $B_l^g$ denote the left-bridge of $g(U')$. Then at least one of the following hold: \\
(a) $U_r$, is contained in $g(U')$, and $|B_r|\geq |g(U')|$.\\
(b)   $U_r$, is contained in $g(U')$, and $|U|\le |B_l^g|$.\\
(a)  $U_l$, is contained in $g(U')$, and $|B_l|\geq |g(U')|$.\\
(b)  $U_l$, is contained in $g(U')$, and $|U|\le |B_r^g|$.
\end{lemma}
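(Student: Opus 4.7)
The plan is a case analysis based on which endpoint of $U$ lies inside $V:=g(U')$, followed by a Newhouse-style bridge argument. By the evident symmetry of the four alternatives in the statement, it suffices to treat the case $U_r\in V$; the case $U_l\in V$ is handled by the mirror-image argument. In the case $U_r\in V$, the gap-pair hypothesis in Definition~\ref{g56}(b) forces $V_l\in U$, so writing $U=(a,b)$ and $V=(p,q)$ the endpoint ordering is $a<p<b<q$. My goal then reduces to showing that either (a) $|B_r|\ge|V|$, or (b) $|U|\le|B_l^g|$.

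I would argue by contradiction, assuming both (a) and (b) fail. From $|B_r|<|V|$, the definition of the right-bridge (Definition~\ref{N100}) yields a gap $U''\in\mathcal{G}(C_{1/3})$ with $|U''|\ge|U|$ and left endpoint $U''_l\in(b,b+|V|)$, every intermediate gap of $C_{1/3}$ in $(b,U''_l)$ being strictly shorter than $|U|$. Symmetrically, from $|B_l^g|<|U|$ there is a gap $V''\in\mathcal{G}(g(C))$ with $|V''|\ge|V|$ and right endpoint $V''_r\in(p-|U|,p)$. The key observation is that if $U''_l<q$, then $(U'',V)$ is again a gap pair (now of the ``$U_l\in V$'' orientation, since $V$ contains $U''_l$ but not $U''_r$, while $U''$ contains $V_r$ but not $V_l$); symmetrically, when $V''_r>a$, the pair $(U,V'')$ is a gap pair of the opposite orientation. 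Applying the lemma's disjunction to these enlarged pairs and alternating sides produces an infinite sequence of linked gap pairs whose sizes are non-decreasing and strictly grow infinitely often — a contradiction, since both $C_{1/3}$ and $g(C)$ lie in a compact interval and hence admit only finitely many gaps above any fixed positive length.

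The principal obstacle will be the borderline configurations $U''_l\ge q$ (and its mirror $V''_r\le a$), in which $U''$ no longer interlocks with $V$ and the naive iteration above breaks down. In such cases one must instead exploit the inequality $U''_l<b+(q-p)$ directly, together with the fact that every gap of $C_{1/3}$ inside the overlap region $(b,q)\subset V$ is strictly shorter than $|U|$ (a consequence of the bridge-definition of $U''$), to extract a finer piece of structural information about $C_{1/3}$ inside $V$ and analogously for $g(C)$ inside $U$. Organizing this case analysis cleanly, and in particular verifying strict monotone growth of gap sizes across each iteration step in every sub-case, is where essentially all of the technical work of the lemma resides.
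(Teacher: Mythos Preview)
Your proposal misses the point of the lemma and would not close. The lemma is \emph{not} a general Newhouse-style statement that one can hope to prove by an abstract linked-gap iteration; it is a quantitative fact about the specific pair $C_{1/3}$ and $g(C)$, and it is proved by a direct two-line dichotomy that uses precisely the two inputs you never invoke: (1) for the middle-third Cantor set every bridge equals its adjacent gap, $|B_l|=|B_r|=|U|$, and the gap lengths are powers of $1/3$, so $|U'|<|U|$ forces $|U'|\le \tfrac{1}{3}|U|$; and (2) Lemma~\ref{K97}(i) gives $|g'|\in[1+\epsilon,3-\epsilon]$. With $U_r\in g(U')$ one simply splits on the size of $|U'|$: if $|U'|<|U|$ then $|g(U')|\le(3-\epsilon)\cdot\tfrac{1}{3}|U|<|U|=|B_r|$, giving alternative~(a); if $|U'|\ge|U|$ then the bridges of $C$ adjacent to $U'$ have length $|U'|\ge|U|$, and their $g$-images (which are the bridges of $g(C)$ adjacent to $g(U')$ by Fact~\ref{g48}) have length at least $(1+\epsilon)|U|>|U|$, giving alternative~(b). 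That is the entire proof.

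Your contradiction scheme, by contrast, never uses the derivative bound or the $1/3$-arithmetic, so it cannot possibly succeed: without those hypotheses the conclusion is simply false (take $g$ with large derivative and both (a) and (b) can fail). Moreover, your iteration is circular --- you ``apply the lemma's disjunction to these enlarged pairs'' while still proving the lemma --- and even on its own terms you concede that the borderline cases $U''_l\ge q$ are unresolved and that ``essentially all of the technical work'' is still missing. Replace the whole plan with the size dichotomy on $|U'|$ versus $|U|$ above.
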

 Observe that Lemma \ref{K97} guarantees  the existence of a gap $U'$ of $C$ and a gap $U$ of $C_{1/3}$ such that $(U,g(U'))$ form a gap pair.
\begin{proof}[Proof of Lemma \ref{PTsub}] For symmetry, without loss of generality we may assume that $U_r\in g(U')$.
It is immediate from the construction of the middle-third Cantor set that
$
|B_l| =|U| \mbox{ and }
|B_r|= |U|.
$
Observe that
\begin{description}
  \item[(i)] If $|U'| < |U|$, then $|U'|  \leq \frac{1}{3} |U|$ and so, by the Mean Value Theorem and  \eqref{k96} we have $|g(U')| < |U|$.
  \item[(ii)] If $|U'| \geq |U|$, then $|B_l^g|>|U|$ and $|B_r^g|>|U|$. To see this, remember that the bridges of $g(C)$ are the $g$-images of the bridges of $C$ and the bridges in $C$ adjacent to $U'$ have the same length as $U'$. We apply $g$ for the bridges adjacent to $U'$ to get the bridges $B_{l}^{g}$
      and $B_{r}^{g}$. Now we use
       the Mean Value Theorem and  \eqref{k96} to verify the assertion above.
\end{description}
\begin{figure}
  \centering
  \includegraphics[width=12cm]{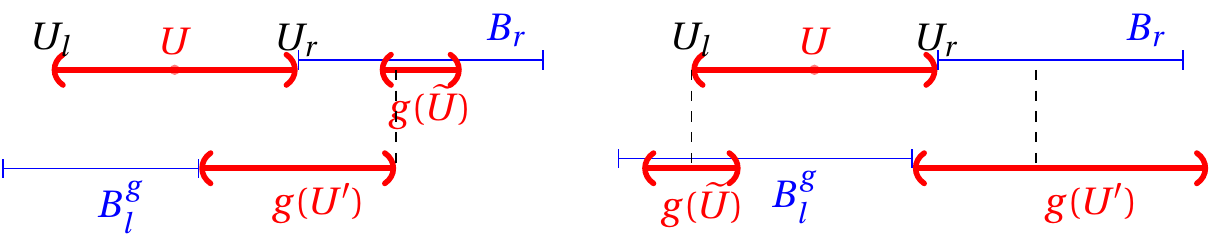}
  \caption{$|\widetilde{U}|<|U|$ and we can replace $(U, g(U'))$ with $(\widetilde{U},g(U'))$ since $(\widetilde{U},g(U'))$ is also a gap pair}\label{g47}
\end{figure}

It is immediate that (i) implies that (a) holds and (ii) implies that
(b) holds.
\end{proof}

Now we can finish the proof of Theorem \ref{g61} as follows:
From now we proceed as the proof of the Gap Lemma
in \cite{PT00}. Namely, Lemma \ref{g48} guarantees that
for any gap pair $(U, g(U'))$ we can replace
either $U$ with a shorter gap $U_1$ of $C_{1/3}$
or we can find a gap $U'_1$ of  $C$ such that $|g(U'_1)|<|g(U')|$.
Since the total length of all gaps is summable, after an infinite sequence of these replacements we get a sequence of gaps with length convergent to zero. Since the closure of these gaps contains points from both of $C_{1/3}$ and $g(C)$ we obtain by a usual compactness argument  that
 \eqref{g49} holds and this completes the proof of the theorem.


\section{proof of Theorem \ref{k1}}\label{sectionproofk1}
\begin{proof}
Recall that we set $C(1/3):=C_{1/3}\times C_{1/3}$ and  $\gamma(x) = \sqrt{1-x^2}$.
As in the proof of Theorem \ref{main}, we fix $\alpha \in (0,1)$ and reduce matters to showing that
\begin{equation}\label{k1set}\left\{y - \gamma(\alpha-x) : x,y \in C_{1/3}\mbox{ and } |\alpha-x|\le1\right \}\end{equation}
contains a non-empty open interval.
Namely, if  $t= y - \gamma(x-\alpha)$ for some $x,y \in C_{1/3}$  then
\begin{equation}\label{z96}
  (\alpha,t)\in C(1/3)+S^1.
\end{equation}
Therefore to verify that $\left(C(1/3)+S^1\right)^{\circ}\ne  \emptyset $ it is enough to show that there exists an interval $\Lambda$ of $\alpha$s so that
\begin{equation}\label{k2}\bigcap_{\alpha \in \Lambda} \left\{y - \gamma(\alpha-x) : x,y \in C_{1/3}\mbox{ and } |\alpha-x|\le1\right \}\end{equation} contains a non-degenerate interval.
The conclusion of the theorem follows immediately upon establishing that the set in \eqref{k2} contains a non-empty open interval; the details of this reduction are explained in the proof of Theorem \ref{main}.
\\

The remainder of the proof is dedicated to establishing that the set in \eqref{k2} contains a non-empty open interval.  The proof follows a similar outline to that of Theorem \ref{g61} where we study pinned distance sets of $C(1/3)$.
\\

Fix $\alpha \in (0,1)$ and a scalar $t$.  Set $$h_{t,\alpha}(x) = t-\gamma(\alpha-x).$$
As explained above, we need to prove that there exists an interval of $\alpha$s and $t$s such that
 \begin{equation}\label{z95}
  h_{t,\alpha}(C_{1/3}') \cap C_{1/3} \neq \emptyset,
\end{equation}
where $C_{1/3}'$ is a suitable restriction of $C_{1/3}$ to the domain of $h_{t,\alpha}$.
\\

Let $C_L$ be as in \eqref{goodleft} respectively.
Recall that $C_L$  is dense in $C_{1/3}$.

\begin{lemma}\label{goodpoint'}
For sufficiently small $\,\epsilon>0$,
there exists a scalar $\alpha_0$, a point pair
$(u_1, u_2) \in C_{1/3}\times C_{1/3}$ with $|u_1-\alpha|<1$, a scalar $t_0$, and an $N\in \mathbb{N}$ satisfying:\\
(i) $u_1 \in C_L$ and $u_2 \in C_L$,\\
(ii) $h_{t_0, \alpha_0}'(u_1) \in [1+2\epsilon, 3-2\epsilon]$, and\\
(iii) $h_{t, \alpha}'(x) \in [1+\epsilon, 3-\epsilon]$ whenever $max\{|x-u_1|, |\alpha-\alpha_0|, |t-t_0|\}\le \frac{1}{3^{N}}$.
\end{lemma}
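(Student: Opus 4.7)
The plan is to parallel Lemma~\ref{goodpoint}. A direct computation gives
\[
h_{t,\alpha}'(x)\;=\;\frac{x-\alpha}{\sqrt{1-(\alpha-x)^{2}}},
\]
which depends on $(x,\alpha)$ only through $\beta:=x-\alpha$ and is independent of $t$. This independence will make the $|t-t_0|\le 3^{-N}$ part of~(iii) essentially free, so I only need to control the derivative uniformly in $(x,\alpha)$ near $(u_1,\alpha_0)$.

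First I will introduce $s(\beta):=\beta/\sqrt{1-\beta^{2}}$, note that it is continuous and strictly increasing on $(-1,1)$ with $s(0)=0$ and $s(\beta)\uparrow+\infty$ as $\beta\uparrow 1$, and so for every sufficiently small $\epsilon>0$ the preimage $s^{-1}\bigl([1+2\epsilon,\,3-2\epsilon]\bigr)$ is a non-degenerate closed subinterval $[\beta_-,\beta_+]\subset(0,1)$. I will fix any $\beta_0$ in the open interior $(\beta_-,\beta_+)$.

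Next, using density of $C_L$ in $C_{1/3}$, I will pick any $u_1\in C_L$ and set $\alpha_0:=u_1-\beta_0$. This gives $|u_1-\alpha_0|=\beta_0<1$ and $h_{t,\alpha_0}'(u_1)=s(\beta_0)\in(1+2\epsilon,\,3-2\epsilon)$, so~(ii) holds for any choice of $t_0$. I will then pick any $u_2\in C_L$ (density of $C_L$ again) and set $t_0:=u_2+\sqrt{1-\beta_0^{2}}$, which forces $h_{t_0,\alpha_0}(u_1)=u_2$. This matching relation is not demanded by~(i)--(iii) explicitly, but it is the reason to keep $u_2$ and $t_0$ around in the statement: it sets up the Newhouse-type gap argument that will follow, exactly as $u_2=g_{v_0}(u_1)$ did in the proof of Theorem~\ref{g61}. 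Parts~(i) and~(ii) are immediate from this construction.

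For~(iii) I will invoke continuity of the smooth function $(x,\alpha)\mapsto h_{t,\alpha}'(x)$ near $(u_1,\alpha_0)$: since the value $s(\beta_0)$ lies in the open interval $(1+\epsilon,\,3-\epsilon)$, there is some $\delta>0$ for which $|x-u_1|<\delta$ and $|\alpha-\alpha_0|<\delta$ force $h_{t,\alpha}'(x)\in[1+\epsilon,\,3-\epsilon]$, and then I will choose $N\in\mathbb{N}$ with $3^{-N}<\delta$. I do not anticipate a substantive obstacle: $\alpha_0$ and $t_0$ are unconstrained real parameters (neither needs to lie in $C_{1/3}$), so the only input I need from the Cantor set is the density of $C_L$ in $C_{1/3}$, which is already recorded immediately before the lemma statement.
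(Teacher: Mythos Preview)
Your proof is correct and follows essentially the same approach as the paper's. Both arguments compute $h_{t,\alpha}'(x)=(x-\alpha)/\sqrt{1-(x-\alpha)^2}$, exploit that this depends only on $x-\alpha$ and not on $t$, use density of $C_L$ to place $u_1,u_2$, fix $t_0$ by the relation $h_{t_0,\alpha_0}(u_1)=u_2$, and obtain (iii) by continuity; the only cosmetic difference is that the paper first chooses $\alpha_0$ so that the derivative interval meets $C_{1/3}$ and then picks $u_1$ inside, whereas you pick $u_1\in C_L$ first and then set $\alpha_0=u_1-\beta_0$.
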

\begin{proof}
Observe that for $t$ arbitrary,
$$h_{t, \alpha}'(x) = \gamma'(\alpha-x)= \frac{(x-\alpha)}{\sqrt{1-(x- \alpha )^2}}$$
is a bijection from $(\alpha,\alpha+1)$ to $(0,\infty)$.  Let $I_{\alpha}\subset (\alpha, \alpha+1)$ denote the open set of $x$ so that
$h'_{t,\alpha}(x)\in (1+2\epsilon, 3-2\epsilon)$.  Observe that the length of $I_{\alpha}$ is independent of $\alpha$.
\\

Choose $\alpha_0$ so that $I_{\alpha_0}$ intersects $C_{1/3}$.
Choose $u_1 \in I_{\alpha_0}\cap C_L$.  Next, choose $t_0$ so that
$$u_2:=h_{t_0, \alpha_0}(u_1) \in C_L.$$

It follows by the continuity of $H(t, \alpha, x):= h'_{t,\alpha}(x)$ at $(t_0,\alpha_0, u_1)$, that there exists a non-empty open neighborhood of $(t_0,\alpha_0, u_1)$ on which $h'_{t,\alpha}(x)\in (1+\epsilon, 3-\epsilon)$.  Choose $N\in \mathbb{N}$ so that $\frac{1}{3^N}$ is strictly less than the radius of this interval.
\end{proof}
Fix the points $u_1, u_2, \alpha_0, t_0$ and $\delta>0$ as in Lemma \ref{goodpoint'}.

\begin{lemma}\label{K97a}
There exists $\delta'>0$ and $M\in \mathbb{N}$ so that if $\alpha_0< \alpha<\alpha_0+\delta'$ and $t_0-\delta'<t<t_0$, then
$h_{t,\alpha}([u_1-\frac{1}{3^M},u_1]\cap C_{1/3})$ is not contained in a gap of $C_{1/3}$ and $C_{1/3}$ is not contained in a gap of\  $h_{t,\alpha}([u_1-\frac{1}{3^M},u_1]\cap C_{1/3})$.
\end{lemma}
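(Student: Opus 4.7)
The plan is to mirror the argument of Lemma \ref{K97}(ii), adapted to the configuration in which both $u_1$ and $u_2$ lie in $C_L$ (rather than the mixed case $u_1\in C_L$, $u_2\in C_R$ treated there). The key geometric idea is to choose $M$ large and $\delta'$ small enough that, on the prescribed perturbation box, the convex hull of the image $h_{t,\alpha}([u_1-3^{-M},u_1]\cap C_{1/3})$ lies strictly inside the level-$(M-1)$ cylinder of $C_{1/3}$ whose right endpoint is $u_2$, while having length that exceeds the largest $C_{1/3}$-gap inside that cylinder.

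Let $k_1,k_2$ denote the levels of the $C_{1/3}$-gaps immediately to the right of $u_1$ and $u_2$. I would first fix $M\ge\max\{N,k_1,k_2+1\}$, so that the intervals $[u_1-3^{-M},u_1]$, $[u_2-3^{-M},u_2]$, and $[u_2-3^{-(M-1)},u_2]$ are all cylinders of $C_{1/3}$ (using $u_1,u_2\in C_L$) and so that Lemma \ref{goodpoint'}(iii) is available on the relevant box. Next, observe that $\partial h_{t,\alpha}(u_1)/\partial t=1>0$ and $\partial h_{t,\alpha}(u_1)/\partial \alpha=(\alpha-u_1)/\sqrt{1-(\alpha-u_1)^2}<0$ (the latter using $\alpha<u_1$, forced by $h'(u_1)>0$); so both prescribed perturbations $\alpha\uparrow$ and $t\downarrow$ strictly decrease $h_{t,\alpha}(u_1)$. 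Consequently $\eta:=u_2-h_{t,\alpha}(u_1)>0$ on the open box, and by the Mean Value Theorem $\eta\le C_1\delta'$ for an explicit constant $C_1$. I then choose $\delta'\le 3^{-M}$ small enough that $C_1\delta'<\epsilon\cdot 3^{-M}$, so $\eta\in(0,\epsilon\cdot 3^{-M})$.

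With this setup, the image's convex hull is $[u_2-\eta-h'(\xi)\cdot 3^{-M},\,u_2-\eta]$ for some $\xi$, with $h'(\xi)\in[1+\epsilon,3-\epsilon]$ by Lemma \ref{goodpoint'}(iii). Combining $\eta<\epsilon\cdot 3^{-M}$ with $h'(\xi)\le 3-\epsilon$ places this convex hull strictly inside the level-$(M-1)$ cylinder $[u_2-3^{-(M-1)},u_2]$, where the largest $C_{1/3}$-gap is the level-$M$ middle gap of size $3^{-M}$. Since the convex hull has length $h'(\xi)\cdot 3^{-M}\ge(1+\epsilon)\cdot 3^{-M}>3^{-M}$, the open convex hull interval cannot be contained in any single gap and must contain a point of $C_{1/3}$. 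If the image Cantor set were contained in some gap $(p,q)$ of $C_{1/3}$, the open convex hull would lie in $(p,q)$ and hence that $C_{1/3}$-point would lie in $(p,q)$, contradicting $(p,q)\cap C_{1/3}=\emptyset$; so the first required condition holds. The second condition — that $C_{1/3}$ is not contained in a single gap of the image — is immediate: the image sits inside an interval of length $\le 3^{-(M-1)}$ around $u_2\in(0,1)$, while $C_{1/3}$ has diameter $1$ and contains $0$ and $1$, which cannot simultaneously lie in any single connected component of the image's complement.

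The main delicate point is the quantitative balance of $M$ and $\delta'$: $M$ must dominate the cylinder levels at $u_1,u_2$ as well as the $N$ coming from Lemma \ref{goodpoint'}(iii), and only after fixing $M$ can one choose $\delta'$ of order $3^{-M}$ small enough to keep $\eta$ inside the narrow window $(0,\epsilon\cdot 3^{-M})$. Once that ordering of quantifiers is set, the rest is a direct reading off from the self-similar gap–cylinder structure of $C_{1/3}$.
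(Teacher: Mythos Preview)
Your argument is correct and follows essentially the same route as the paper's: fix $M$ large enough that the relevant intervals at $u_1,u_2$ are cylinders and Lemma~\ref{goodpoint'}(iii) applies, then use continuity/monotonicity in $(t,\alpha)$ to trap $h_{t,\alpha}(u_1)$ just below $u_2$, and the derivative bound to stretch the image. The only cosmetic difference is in how the first condition is certified: the paper simply exhibits the single point $u_2-3^{-M}\in C_{1/3}$ lying strictly between the two image endpoints (from $h_{t,\alpha}(u_1-3^{-M})<u_2-3^{-M}<h_{t,\alpha}(u_1)<u_2$), whereas you use a length comparison inside the level-$(M-1)$ cylinder. Your monotonicity check that $\eta>0$ on the one-sided box $(\alpha_0,\alpha_0+\delta')\times(t_0-\delta',t_0)$ is in fact cleaner than the paper's proof, which states the conclusion on a two-sided neighborhood where the strict inequality $h_{t,\alpha}(u_1)<u_2$ cannot hold at the center.
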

\begin{proof}
Let $M\in \mathbb{N}$ to be determined.
Use the continuity of $H(t,\alpha):= h_{t,\alpha}(u_1)$ at $(t_0, \alpha_0)$ to
choose $\delta'>0$ ($\delta'\le\delta$, where $\delta$ is as in Lemma \ref{goodpoint'}) so that if $\max\{|\alpha-\alpha_0|, |t-t_0|\}<\delta'$, then
$$u_2-\frac{1}{3^M}< h_{t,\alpha}(u_1) <u_2,$$
where $u_2= h_{t_0,\alpha_0}(u_1).$
Next, use the derivative assumption proved in the previous lemma to prove that whenever
$\max\left\{|\alpha-\alpha_0|, |t-t_0|\right\}<\frac{1}{3^M}$ and $M\geq N$ (see Lemma \ref{goodpoint'} part (iii)), then
$$h_{t,\alpha}(u_1) - h_{t,\alpha}\left(u_1-\frac{1}{3^M} \right)> \frac{1}{3^M}.$$
It follows that if $\max\{|\alpha-\alpha_0|, |t-t_0|\}<  \frac{1}{3^M} $ and $\frac{1}{3^M} \le \delta'$, then
$$h_{t,\alpha}\left(u_1-\frac{1}{3^M} \right) < u_2 - \frac{1}{3^M} < h_{t,\alpha}(u_1)  < u_2.$$
The choice $(u_1,u_2)\in C_L\times C_L$ implies that both
$\left[u_1-\frac{1}{3^M},u_1\right]$ and
$\left[u_2-\frac{1}{3^{M}}, u_2 \right]$ are cylinder intervals of $C_{1/3}$ if $M$ is chosen sufficiently large.
Choosing such an $M$ completes the proof of the lemma.
\end{proof}
Lemma \ref{K97a} guarantees the existence of gap pairs for $h_{t,\alpha}([u_1-\frac{1}{3^M},u_1]\cap C_{1/3})$ and $C_{1/3}$ whenever
$\alpha_0< \alpha<\alpha_0+\delta'$, $t_0-\delta'<t<t_0$, and $\delta=\frac{1}{3^M}$ for $M$ sufficiently large.
\\

The proof of the Theorem now proceeds exactly as in Theorem  \ref{g61} where it was established that the existence of a gap pair between two sets guarantees their intersection.
In particular, see the proof of Theorem \ref{g61} from equation \eqref{g49} onward where we set
$g(x):= h_{t,\alpha}(x).$
\end{proof}

 \section{Proof of Theorems \ref{g92} and  \ref{g71}}\label{p99}
 Throughout this Section we use the following notation:
\begin{definition}\label{g95}
Let $\Gamma$ be a polygon of $n$ sides. Then
\begin{equation}\label{077}
  \Gamma=\bigcup_{i=1}^{n}I_i,
\end{equation}
 where $I_i$
is a straight line segment.
We write $\widetilde{\ell }_i$ to denote the straight line which contains $I_i$, and $\ell _i$ for the straight line through the origin which is parallel to $\widetilde{\ell }_i$.
Let $\alpha_i$ denote the angle between $\ell _i$ and the $x-$axis.
\\

For each $i\in \{1,\dots, n\}$, choose $u_i\in\mathbb{R}$ such that
\begin{equation}\label{g87}
  \widetilde{\ell }_i=u_i \cdot \mathbf{e}_{\alpha_{i}^{\perp}}+\ell _i,
\end{equation}
where $\mathbf{e}_\alpha\in S^1$ denotes the unit vector of  angle $\alpha$ and $<e_{\alpha},\, e_{\alpha^{\perp}}> =0$.
\\


We now introduce a dense family of parallel lines. Let $G \subset \mathbb{R}$ and set
\begin{equation}\label{g84}
  P_i(G)
  :=\bigcup\limits_{g\in G}\left(g \cdot \mathbf{e}_{\alpha_{i}^{\perp}}+\ell _i\right)
\mbox{ and }
\widetilde{P}_i(G):=P_i(G+u_i)=
P_i(G)+u_i\mathbf{e}_{\alpha_{i}^{\perp}}.
\end{equation}

Using that $\ell _i+\ell _i=\ell _i$  we obtain that
\begin{equation}\label{g85}
P_{i}(G)+\widetilde{\ell }_i = \widetilde{P}_{i}(G)
\mbox{ and }
P_{i}(G)^c+\widetilde{\ell }_i = \widetilde{P}_{i}(G)^c.
\end{equation}

\end{definition}

\begin{proof}[Proof of Theorem \ref{g92}] Let $\Gamma$ be a polygon of $n$ sides. We use the notation of definition \ref{g95}.
Fix a $G \subset \mathbb{R}$ which is a dense $G_\delta$ set with
$\mathcal{L}^1(G)=0$.
With this choice of $G$, both $P_i(G)$ and $\widetilde{P}_i(G)$ defined in \eqref{g84} are  dense $G_\delta$ subsets of $\mathbb{R}^2$ of zero $\mathcal{L}^2$-measure for every $i=1, \dots ,n$.
We define
$$
A:=\bigcap\limits_{k=1}^{n} \left(P_{k}(G)\right)^{c},
$$
Observe that $A$ is of full measure.

By the definition of $\widetilde{\ell }_i $ we have
$$
A+\Gamma \subset
A+\bigcup\limits_{i=1}^{n} \widetilde{\ell }_i=
\bigcup\limits_{i=1}^{n}\left(A+\widetilde{\ell }_i\right)
$$
It remains to verify that $\left(\bigcup\limits_{i=1}^{n}\left(A+\widetilde{\ell }_i\right)\right)^{\circ}=\emptyset$.
This is so because, by \eqref{g85} and the definition of $A$, we have that for each $i$:
\begin{equation}
A+\widetilde{\ell _i}
\, \subset \,\left( \widetilde{P}_i(G) \right)^c,
\end{equation}
which implies that
\begin{equation}
\bigcup_{i=1}^{n}\left( A+\widetilde{l_i} \right)
\, \subset \, \bigcup_{i=1}^{n}\left(  \widetilde{P}_i(G) \right)^c.
\end{equation}

That is,
\begin{equation}
\left(A+\Gamma \right)^c
\, \supset \, \bigcap_{i=1}^n \widetilde{P}_i(G).
\end{equation}

The set on the right hand side is dense since it is a finite intersection of dense $G_\delta$ sets (see the Baire category theorem). This proves that $A+\Gamma$ is disjoint from a dense set, and so its interior must be empty.
\bigskip

\end{proof}

 \begin{proof}[Proof of Theorem \ref{g71}]
  Besicovitch proved (see \cite[Theorem 11.1]{Mat15}) that there exists a Borel set $\widetilde{B}\subset \mathbb{R}^2$ such that
$\widetilde{B}$ contains a line  in every direction but $\mathcal{L}^2(\widetilde{B})=0$.
Following Mattila \cite{Mat15} we define
\begin{equation}\label{g73}
  B:=\bigcup\limits_{r\in\mathbb{Q}\times\mathbb{Q}} (r+\widetilde{B}).
\end{equation}
Then $\mathcal{L}^2(B)=0$ and in every direction there is a dense set of lines contained in $B$. Now let
\begin{equation}\label{g72}
A:=D\cap B^c,
\end{equation}
where $D$ is the open unit disc $D:=\left\{(x,y):x^2+y^2 < 1\right\}$.
 Then $A$ is a set full Lebesgue measure in $D$ but in every direction  there is a dense set of straight lines which do not intersect $A$.
 \\

 Fix an arbitrary $\theta\in [0,\pi)$. Let $N$ be the perimeter of $[-1,1]^2$.
 Recall that we defined $N_\theta$ as the rotated (with angle $\theta$ in anti-clockwise direction around the origin ) image of $N$. Let $N_{\theta}^{N}$, $N_{\theta}^{W}$, $N_{\theta}^{S}$, $N_{\theta}^{E}$ be the rotated (with the same rotation as above) image of the Northern, Western, Southern and Eastern wall of $[-1,1]^2$ respectively. Let
 $$
 A_\theta:=\mathrm{proj}_{\theta}(A).
 $$
 Then using that $A$ is contained in the unit disk we obtain that
 \begin{equation}\label{g69}
   ((A_\theta+1)\times\ell _{\theta}^\perp)\cap ((A_\theta-1)\times \ell _{\theta}^\perp)=\emptyset.
 \end{equation}
 Here we considered $A_\theta\pm 1$ as subsets of $\ell _\theta$ and the product is meant in the
 $\ell _\theta, \ell _{\theta^\perp}$ coordinate system.
 By the definition we have
 \begin{equation}\label{g68}
   (A_\theta+1)^{\circ}=\emptyset \mbox{\,\, and \,\, }
   (A_\theta-1)^{\circ}=\emptyset.
 \end{equation}
 Putting together \eqref{g69}, \eqref{g68} and the fact that
 \begin{equation}\label{g67}
   A+N_{\theta}^{E}\subset (A_\theta+1)\times\ell _{\theta}^\perp \mbox{\,\, and \,\, }
   A+N_{\theta}^{W}\subset (A_\theta-1)\times\ell _{\theta}^\perp,
 \end{equation}
  we obtain that there is a dense set of lines parallel to $\ell _{\theta^\perp}$ which is in the complement of
  $$
 \left(A+N_{\theta}^{E}\right)\cup\left(A+N_{\theta}^{W}
  \right)
  $$
 Exactly in the same way one can verify that there is a dense set of lines parallel to $\ell _{\theta}$ which is in the complement of
  $$
   \left(A+N_{\theta}^{S}\right)\cup\left(A+N_{\theta}^{N}
  \right)
  $$
 Observe that the intersection of the two dense families of lines mentioned above (one parallel to $\ell _\theta$ the other one is parallel to $\ell _\theta^\perp$) intersect each other in a dense set $E$. Clearly $(A+N_\theta)\cap E=\emptyset$ which completes the proof of the theorem.
\end{proof}


\end{document}